\newcommand{\tsk}[1]{\textcolor{YellowOrange}}
\def\@endtheorem{\endtrivlist}
\newcommand{\Pic}{\operatorname{Pic}}
\newtheorem{teo}{Theorem}[section]
\newtheorem{proposition}[teo]{Proposition}
\newtheorem{cor}[teo]{Corollary}
\newtheorem{lemma}[teo]{Lemma}
\theoremstyle{definition}
\newtheorem{mainconstruction}[teo]{Main construction}
\newtheorem{notations}[teo]{Notations}
\theoremstyle{definition}
\newtheorem{remark}[teo]{Remark}
\newtheoremstyle{dico}
{\baselineskip}   
{\topsep}   
{}  
{0pt}       
{} 
{.}         
{5pt plus 1pt minus 1pt} 
{}          
\theoremstyle{dico}
\numberwithin{equation}{section}
\newcounter{example}[subsection]
\newenvironment{example}[1][]
{\refstepcounter{example}\smallskip\par\noindent\textbf{Example~\theexample.}\nopagebreak\par\noindent}{\medskip}
\newcommand{\ra}{\rightarrow}
\newcommand{\restr}[1]          {\vert_{#1}}
\renewcommand{\phi}{\varphi}
\def\dg#1{\Delta_{#1}}
\newcommand{\mihi}[1]{}
\begin{document}

\pagestyle{myheadings}

\title{Gaussian maps for singular curves on Enriques surfaces}

\author{Dario Faro}

\address{Universit\`a degli Studi di Pavia, Dip. di Matematica, Via Ferrata 5, 27100 Pavia, Italy.}
\email{d.faro1@campus.unimib.it}

\begin{abstract}
We give obstructions - in terms of Gaussian maps -  for a marked Prym curve $(C,\alpha,T_d)$ to admit a singular model lying on an Enriques surface with only one $d$-ordinary point singularity and in such a way that  $T_d$ corresponds to the divisor over the singular point.
\end{abstract}
\maketitle
\setcounter{section}{-1}

\section{Introduction}
The article deals with the problem of finding obstructions - in terms of Gaussian maps - for a Prym curve $(C,\alpha)$ to admit a singular model (with prescribed singularities) in a polarized Enriques surface $(S,H)$. Let us briefly introduce the setting. Let $X$ be a smooth complex projective variety, and let $L$ and $M$ be two invertible sheaves on $X$. Denote by $R(L,M)$ the kernel of the multiplication map $\Phi^0_{L,M}: H^0(X,L) \otimes H^0(X,M) \rightarrow H^0(X,L \otimes M)$. The first Gaussian map associated with $L,M$ is the map
$$
\Phi^1_{X,L,M}:R(L,M) \rightarrow H^0(X, \Omega^1_X \otimes L \otimes M)
$$
locally defined as $\Phi^1_{X,L,M}(s \otimes t)=sdt-tds$. If $L=M$ one usually writes $\Phi^1_{X,L}$ and since it vanishes on symmetric tensors, one usually considers its restriction to $\bigwedge^2L$.
Gaussian maps were introduced by Wahl who showed, in \cite{wahl jac}, that if $(S',H')$  is a polarized $K3$ surface and and $C' \in |H'|$ is a curve then  the Gaussian map (also called Wahl map) $\Phi^1_{\omega_C}$ is not surjective (see also \cite{bm} for a different proof). 
 On the other hand Ciliberto, Harris and Miranda proved in \cite{cilharmir} that the Wahl map is surjective as soon as it is numerically possible, i.e. for $g \geq 10, g \neq 11$. The converse also holds. In \cite{arba} Arbarello, Bruno and Sernesi proved that a Brill-Noether-Petri general curve with non-surjective Gaussian map lies in a $K3$ surface or on a limit thereof. See also \cite{cilds}  for a result relating the corank of the Gaussian map and $r$-extendibility. Analogous problems for Enriques surfaces have also been studied  by some authors. Indeed, let $(S,H)$ be  a polarized Enriques surface and let  $C \in |H|$ be a smooth curve and $\alpha:=K_{|_{C}}$. In \cite{ballcil} it is proven that the Gaussian map $\Phi_{\omega_C,\omega_C \otimes \alpha}$ is not surjective, whereas in \cite{cv} it is shown that for the general prym curve $(C,\alpha)$ of genus   $g \geq 12$, $g \neq 13, 19$ the map  is surjective. Gaussian maps  have been studied and used by many authors, either in relation to extendibility questions, we mention e.g.  \cite{ballcil}, \cite{balfonta}, \cite{bfr} - \cite{farspel},  \cite{ccm} - \cite{cf2} - \cite{ortiz}, \cite{cildedieu}, \cite{dufmir}, \cite{knutre} -\cite{knudue}, \cite{cfp} (see also \cite{lopezexten} for a complete  survey),  or in relation to the second fundamental form of Torelli-type immersions, e.g. \cite{cf3}, 
\cite{cf4},  \cite{colfre}, \cite{colpirto}. \cite{fred}. 
The result of Wahl was generalized by some authors, e.g. Zak-L'Vovsky, who proved the following theorem, that we are going to use.
\begin{teo}[\cite{lvovsky}]
\label{L'vosky}
Let $C$ be a smooth curve of genus $g>0$ and let $A$ be a very ample line bundle on $C$,  embedding $C$ in $\mathbb{P}^n$ for $n \geq 3$. If $C \subset \mathbb{P}^n$ is scheme-theoretically a hyperplane section of a smooth surface $X \subset \mathbb{P}^{n+1} $, then the Gaussian map $\Phi_{\omega_C,A}$ is not surjective.
\end{teo}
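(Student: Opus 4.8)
The plan is to use the classical principle, due to Wahl and refined by Zak--L'vovsky, that surjectivity of a Gaussian map forces projective rigidity, so that the presence of a genuine surface extension must obstruct surjectivity. Write $A=\mathcal{O}_C(1)$ and recall $\Omega^1_C\cong\omega_C$, so that the target of $\Phi_{\omega_C,A}$ is $H^0(C,\omega_C^{\otimes 2}\otimes A)$. By Serre duality its dual is $H^1(C,T_C\otimes A^{-1})$, where $T_C=\omega_C^{-1}$. Thus it suffices to produce a \emph{nonzero} class in $H^1(C,T_C\otimes A^{-1})$ that annihilates the image of $\Phi_{\omega_C,A}$, and I would extract such a class from the surface $X$.

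First I would set up the two normal bundle sequences attached to the flags $C\subset\mathbb{P}^n\subset\mathbb{P}^{n+1}$ and $C\subset X\subset\mathbb{P}^{n+1}$. Since $C$ is a hyperplane section of $X$ one has $N_{C/X}\cong\mathcal{O}_X(C)|_C\cong A$, and since $\mathbb{P}^n\subset\mathbb{P}^{n+1}$ is a hyperplane one has $N_{\mathbb{P}^n/\mathbb{P}^{n+1}}|_C\cong A$, giving
\[
0\to N_{C/\mathbb{P}^n}\to N_{C/\mathbb{P}^{n+1}}\xrightarrow{\,p\,}A\to 0 .
\]
Because $C$ is scheme-theoretically a smooth hyperplane section of the smooth surface $X$, the intersection $X\cap\mathbb{P}^n=C$ is transverse, so the composite $A\cong N_{C/X}\hookrightarrow N_{C/\mathbb{P}^{n+1}}\xrightarrow{p}A$ is a nonzero, hence invertible, endomorphism of $A$. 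This transversality is exactly what packages $X$ into a first-order extension of $C$ off the hyperplane, i.e. a class $\xi\in H^0(C,N_{C/\mathbb{P}^n}(-1))=H^0(C,N_{C/\mathbb{P}^n}\otimes A^{-1})$ in the extension theory of Zak--L'vovsky.

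The crux is to connect $\xi$ to $\Phi_{\omega_C,A}$. I would twist the tangent sequence of $C\subset\mathbb{P}^n$ by $A^{-1}$,
\[
0\to T_C\otimes A^{-1}\to T_{\mathbb{P}^n}|_C\otimes A^{-1}\to N_{C/\mathbb{P}^n}\otimes A^{-1}\to 0,
\]
and consider the connecting map $\delta\colon H^0(N_{C/\mathbb{P}^n}\otimes A^{-1})\to H^1(T_C\otimes A^{-1})$, whose target is precisely the Serre dual of the target of the Gaussian map. The key computation, and the main obstacle, is Wahl's identification showing that $\operatorname{im}\delta$ annihilates $\operatorname{im}\Phi_{\omega_C,A}$ (that is, $\delta$ is, up to the standard identifications coming from the Euler and conormal sequences, the transpose of $\Phi_{\omega_C,A}$), together with the identification of $\ker\delta$ with the space of "cone directions" $\operatorname{im}\big(H^0(T_{\mathbb{P}^n}|_C\otimes A^{-1})\to H^0(N_{C/\mathbb{P}^n}\otimes A^{-1})\big)$. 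Granting this, I would finish as follows: since $g>0$ the curve $C$ is not a line, so a cone over $C$ is singular at its vertex; as $X$ is smooth it is therefore not a cone, and hence its extension class $\xi$ does not lie in $\ker\delta$. Then $\delta(\xi)\in H^1(T_C\otimes A^{-1})$ is a nonzero functional vanishing on $\operatorname{im}\Phi_{\omega_C,A}$, so $\Phi_{\omega_C,A}$ cannot be surjective. I expect the hardest and most technical point to be the dictionary of the previous step, namely realizing the coboundary $\delta$ as the transpose of the Gaussian map and pinning down $\ker\delta$; the geometric input (the two normal bundle sequences and the implication smooth $\Rightarrow$ non-cone) is comparatively routine.
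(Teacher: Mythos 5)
This theorem is quoted by the paper from L'vovsky (\cite{lvovsky}) and is not proved in the text, so there is no in-paper argument to compare against; I can only assess your sketch on its own terms. The framework you set up is the correct, standard one: the target $H^0(C,\omega_C^{\otimes 2}\otimes A)$ is Serre-dual to $H^1(C,T_C\otimes A^{-1})$, and the coboundary $\delta\colon H^0(N_{C/\mathbb{P}^n}\otimes A^{-1})\to H^1(T_C\otimes A^{-1})$ of the twisted normal bundle sequence is, via the Euler-sequence identifications, the transpose of the Gaussian map --- this is exactly dual to the conormal-sequence description of $\Phi_{L,M}$ that the paper itself recalls in Section \ref{section1}, so that $\mathrm{im}\,\delta$ is the annihilator of $\mathrm{im}\,\Phi_{\omega_C,A}$. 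The surface $X$ does produce a Zak--L'vovsky class; a minor imprecision is that the natural class lives in $H^0(N_{C/\mathbb{P}^{n+1}}(-1))$, and you land in $H^0(N_{C/\mathbb{P}^n}(-1))$ only after subtracting the constant vector field transverse to the hyperplane, using the splitting that transversality provides.

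The genuine gap is the final step, ``$X$ smooth, hence not a cone, hence $\xi\notin\ker\delta$.'' The equality $\ker\delta=\mathrm{im}\bigl(H^0(T_{\mathbb{P}^n}|_C\otimes A^{-1})\to H^0(N_{C/\mathbb{P}^n}\otimes A^{-1})\bigr)$ is a formal consequence of the long exact sequence and costs nothing; calling that image the space of ``cone directions'' smuggles in the entire content of the theorem, namely that an honest surface extension whose class lies in that image must be a cone. This is not formal: $\xi$ only records the first infinitesimal neighborhood (the ribbon) of $C$ in $X$, and one must show that triviality of this first-order datum propagates to all orders and forces $X$ to be a cone over $C$. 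That propagation is the heart of L'vovsky's argument (an induction on infinitesimal neighborhoods, comparing with the universal trivial extension), and it is where the hypothesis $n\geq 3$ actually enters --- your sketch never uses $n\geq 3$, which is a symptom that the decisive step has been assumed rather than proved. So the proposal is a faithful road map of the standard proof, but of the two lemmas you ``grant,'' the first (the duality) is routine while the second (the cone criterion) is essentially the theorem itself.
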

Similar questions for singular curves on $K3$ surfaces are discussed and solved by Kemeny in \cite{kem1}. In the article the author asks whether one can give an obstruction in terms of suitable gaussian maps for a curve to have a nodal model lying on a $K3$ surface. Following the author notations, denote by $\Bar{\mathcal{M}}_{h,2l}$ the moduli stack of smooth curve of genus $g$ with $2l$ marked point and by $\widetilde{\mathcal{M}}_{h,2l}=\Bar{\mathcal{M}}_{h,2l}/S_{2n}$ the stack of curves with unordered marking. Let $h,l$ be two positive integers and $[C,T] \in \widetilde{\mathcal{M}}_{h,2l}$. The author introduces the marked Wahl map:
\begin{equation}
 W_{C,T}: \bigwedge^2H^0(C,K_C(-T)) \rightarrow H^0(C,K_C^3(-2T)).
\end{equation}
Then the following theorems are proven.
\begin{teo}[\cite{kem1}]
Fix any integer integer $l \in \mathbb{Z}$. Then there exists infinitely many integers $h(l)$, such that the general marked $[(C,T)] \in \widetilde{\mathcal{M}}_{h(l),2l}$ has surjective marked Wahl map.
\end{teo}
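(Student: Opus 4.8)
The plan is to first reinterpret the marked Wahl map as an ordinary Gaussian map and then to establish surjectivity by a degeneration argument, reducing to the theorem of Ciliberto, Harris and Miranda. The key observation is that $K_C^3(-2T) = \omega_C \otimes (K_C(-T))^{\otimes 2}$, so that upon setting $L := K_C(-T)$ the marked Wahl map $W_{C,T}$ is nothing but the first Gaussian map $\Phi^1_{C,L}\colon \bigwedge^2 H^0(L) \to H^0(\omega_C \otimes L^{\otimes 2})$ of the line bundle $L$. Thus the task is to show that for fixed $l$ and infinitely many $h$ the Gaussian map of $L = K_C(-T)$ is surjective for the general $(C,T)$. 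The special feature of $L$ is that it is a twist of the canonical bundle by a general effective divisor of small degree: $\deg L = 2h-2-2l$ and, for $2l \le h$ and general $T$, one has $h^1(L) = h^0(\mathcal{O}_C(T)) = 1$ and hence $h^0(L) = h - 2l$.

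I would next record the numerical constraint. With the cohomology above, $\dim \bigwedge^2 H^0(L) = \binom{h-2l}{2}$, while $\omega_C \otimes L^{\otimes 2} = K_C^3(-2T)$ is nonspecial of degree $6h-6-4l$, so the target has dimension $5h-5-4l$. Surjectivity therefore forces $\binom{h-2l}{2} \ge 5h-5-4l$, an inequality satisfied for all $h \ge h_0(l)$; this is where the genus must be taken large. Since surjectivity of a morphism of sheaves is an open condition on the base, it suffices to exhibit, for each $h$ in a suitable infinite set, a single possibly degenerate marked curve $(C_0,T_0)$ whose marked Wahl map is surjective and along which the relevant groups $H^0$ do not jump.

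The heart of the argument is the choice of degeneration. I would specialize $(C,T)$ to a nodal curve $C_0 = C_1 \cup C_2$, a transverse union of two general smooth curves meeting at several points, distributing the $2l$ marked points on the two components so that the restriction of $L = K_{C_0}(-T_0)$ to each $C_i$ is a line bundle whose own Gaussian map is surjective, either by the theorem of Ciliberto--Harris--Miranda (\cite{cilharmir}) in the canonical case or by its extensions to Gaussian maps of sufficiently positive line bundles. One then propagates surjectivity from the components to $C_0$ by means of the standard exact sequences expressing the Gaussian map of a line bundle on a nodal curve in terms of the Gaussian maps of its restrictions to the components together with correction terms supported at the nodes. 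Matching the genera of the $C_i$, the number of nodes and the distribution of the marked points so that all the numerical and cohomological hypotheses hold simultaneously is possible precisely for $h$ running over an explicit infinite family, which produces the "infinitely many $h(l)$" of the statement.

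The main obstacle is exactly this last step: controlling the correction terms at the nodes and, just as importantly, guaranteeing that $h^0(L)$ and $h^0(\omega_C \otimes L^{\otimes 2})$ remain constant through the degeneration, so that surjectivity on the special fibre genuinely specializes to the general marked curve rather than being an artifact of a jump in cohomology. Arranging the marked points to sit on rational bridges, equivalently to be the stable limit of colliding pairs, is the cleanest way to realise the desired restricted bundles while staying inside the locus of stable marked curves; verifying that the boundary contributions vanish is the computational core of the proof and is precisely what confines the construction to infinitely many, rather than all, values of $h$.
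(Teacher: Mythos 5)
This statement is quoted from Kemeny's paper \cite{kem1}; the present paper does not reprove it, so the relevant comparison is with Kemeny's argument and with the analogous surjectivity result proved here (Theorem \ref{propsurjgeneral23}, via Sections \ref{section3}--\ref{section5}). Your opening reduction is correct and is also the starting point there: $W_{C,T}$ is the first Gaussian map of $L=K_C(-T)$, since $\omega_C\otimes L^{\otimes 2}=K_C^3(-2T)$, the cohomology counts $h^0(L)=h-2l$ and $h^0(K_C^3(-2T))=5h-5-4l$ are right, and the overall scheme (exhibit one good special marked curve, check that the relevant $h^0$'s are locally constant, conclude by semicontinuity) is exactly the one used in practice.

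The genuine gap is that the entire content of the proof is concentrated in the step you defer: establishing surjectivity at the special fibre. Your special fibre is a reducible nodal curve $C_1\cup C_2$, and you invoke ``standard exact sequences expressing the Gaussian map of a line bundle on a nodal curve in terms of the Gaussian maps of its restrictions to the components together with correction terms supported at the nodes.'' No such ready-made machinery exists at the level of generality you need: even for the canonical bundle on graph curves, controlling those correction terms is the substance of the Ciliberto--Harris--Miranda paper, and their theorem as stated applies to $\Phi_{\omega_C}$ on smooth curves, not to the restrictions $L|_{C_i}=K_{C_i}(\text{nodes})(-T|_{C_i})$, which are neither canonical nor ``sufficiently positive'' (the Bertram--Ein--Lazarsfeld-type surjectivity criteria need degree on the order of $3g_i$ or more, which your components will not have unless they carry very many nodes). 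You explicitly flag this as ``the computational core of the proof'' and do not carry it out, so the argument is not complete. For contrast, Kemeny --- and this paper in its Prym analogue --- avoids nodal degenerations entirely: the special marked curve is a \emph{smooth} curve on a product surface ($C_1\times\mathbb{P}^1$ here, with $T$ the divisor cut out by a fibre); the Gaussian map on the surface decomposes via K\"unneth as $\Phi_{L_1,M_1}\otimes\Phi^0_{L_2,M_2}\oplus\Phi^0_{L_1,M_1}\otimes\Phi_{L_2,M_2}$, each factor is surjective by Bertram--Ein--Lazarsfeld resp.\ Mumford, and one then restricts to the curve using $H^1$-vanishings (Proposition \ref{propsurjmappasopra} and \ref{princsurjaltra}). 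That construction also explains concretely why one gets only infinitely many, not all, genera: $h$ is constrained to the values $g=(g_1-2)d_2+d_1(d_2-1)+1$ realized by curves in the chosen linear systems, a point your write-up leaves as an unexplained artifact of ``boundary contributions.''
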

Now denote by  $\mathcal{V}^n_{g,k}$ the stack parametrizing morphisms $[(f:C \rightarrow X,L)]$ where $(X,L)$ is a polarized $K3$ surface with  $L^2=2g-2$, $C$ is a smooth connected curve of genus $p(g,k)-n$ with $p(g,k):=k^2(g-1)+1$, $f$ is birational onto its image and $f(C) \in |kL|$ is nodal.
\begin{teo}[\cite{kem1}]
Assume $g-n \geq 13$ for $k=1$ or $g \geq 8$ for $k >1$, and let $n \leq \frac{p(g,k)-2}{5}$. Then there is an irreducible component $I^0 \subseteq \mathcal{V}^n_{g,k}$ such that for a general $[(f:C \rightarrow X,L)] \in I^0$ the marked Wahl map $W_{C,T}$ is non surjective, where $T \subseteq C$ is the divisor over the nodes of $f(C)$.
\end{teo}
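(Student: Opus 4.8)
The plan is to realize the marked Wahl map as the self-Gaussian map of a line bundle restricted to a member of its own linear system on an auxiliary surface, and then to obstruct its surjectivity via the holomorphic two-form of that surface, in the spirit of Wahl's theorem.

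\medskip\noindent\textbf{Reduction to a self-Gaussian on a blow-up.}
Let $[(f:C\to X,L)]\in\mathcal V^n_{g,k}$ be general, write $\bar C=f(C)\in|kL|$ for the $n$-nodal image, and let $\pi:\tilde X\to X$ be the blow-up of $X$ at the $n$ nodes, with exceptional divisors $E_1,\dots,E_n$ and strict transform $\tilde C\cong C$. Since $K_X=0$ one has $K_{\tilde X}=\sum_iE_i$, $\tilde C\in|\mathcal E|$ with $\mathcal E:=\mathcal O_{\tilde X}(\tilde C)=\pi^*(kL)-2\sum_iE_i$, and adjunction gives $K_C=(\pi^*(kL)-\sum_iE_i)|_{\tilde C}$. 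As $T=(\sum_iE_i)|_{\tilde C}$ is exactly the divisor over the nodes, a direct computation yields $A:=K_C(-T)=\mathcal E|_{\tilde C}=N_{\tilde C/\tilde X}$ and $K_C^3(-2T)=K_C\otimes A^{2}=\Omega^1_C\otimes A^{2}$. Hence $W_{C,T}$ is precisely the self-Gaussian map $\Phi^1_{C,A}$ of $A=\mathcal E|_{\tilde C}$, for $\tilde C$ a member of $|\mathcal E|$ on the surface $\tilde X$ — exactly the geometric situation of Wahl's theorem, now on the non-minimal surface $\tilde X$, which satisfies $H^1(\tilde X,\mathcal O_{\tilde X})=0$ and $p_g(\tilde X)=h^0(K_{\tilde X})=1$.

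\medskip\noindent\textbf{Factorization through the surface.}
Next I would use the surface to confine the image of $W_{C,T}$. From $0\to\mathcal O_{\tilde X}\to\mathcal E\to A\to0$ and $H^1(\tilde X,\mathcal O_{\tilde X})=0$, every section of $A$ lifts to $\tilde X$, so the self-Gaussian factors through the surface Gaussian $\Phi^1_{\tilde X,\mathcal E}$. Concretely, $\operatorname{Im}(W_{C,T})$ is contained in the image of the restriction map $\rho:H^0(\tilde X,\Omega^1_{\tilde X}\otimes\mathcal E^2)\to H^0(C,K_C^3(-2T))$, which factors as restriction to $C$ followed by the projection $p_*$ induced by the twisted conormal sequence $0\to A\to\Omega^1_{\tilde X}|_C\otimes A^2\to\Omega^1_C\otimes A^2\to0$. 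Since $\rho=p_*\circ(\mathrm{res})$, we get $\operatorname{Im}(W_{C,T})\subseteq\operatorname{Im}(p_*)$, so it suffices to prove that $p_*$ is not surjective.

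\medskip\noindent\textbf{The two-form obstruction.}
The cokernel of $p_*$ is the image of the connecting homomorphism $\delta:H^0(C,K_C^3(-2T))\to H^1(C,A)\cong H^0(C,\mathcal O_C(T))^\vee$, which is cup-product with the extension class of the twisted conormal sequence. The crux is that this class is nonzero, equivalently that the conormal sequence of $\tilde C\subset\tilde X$ does not split. This is where the holomorphic two-form enters: the symplectic form $\sigma\in H^0(X,\Omega^2_X)$ pulls back to the generator $\pi^*\sigma$ of $H^0(\tilde X,K_{\tilde X})\cong\mathbb C$, and, exactly as in Wahl's argument, it identifies $\operatorname{Im}(\delta)$ with a nonzero subspace; more precisely one shows $\operatorname{corank}\Phi^1_{C,\mathcal E|_C}\geq p_g(\tilde X)=1$. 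Hence $p_*$, $\rho$ and therefore $W_{C,T}$ all fail to be surjective. Verifying that the two-form genuinely survives the passage to the blow-up at the nodes and that the induced connecting map is nonzero is the main analytic obstacle; it is also where the numerical hypotheses are used to guarantee that $A=K_C(-T)$ is sufficiently positive — in particular that $K_C^3(-2T)$ is non-special, which the bound $n\le\frac{p(g,k)-2}{5}$ ensures — so that the relevant cohomology behaves as expected.

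\medskip\noindent\textbf{Producing the component and spreading out.}
Finally I would identify $I^0$. Over the irreducible moduli space of polarized K3 surfaces $(X,L)$ with $L^2=2g-2$, consider the Severi-type variety of $n$-nodal curves in $|kL|$; under the hypotheses $g-n\geq13$ (for $k=1$) and $g\geq8$ (for $k>1$) together with $n\le\frac{p(g,k)-2}{5}$, this carries a distinguished irreducible component of the expected dimension whose general member has nodes in general position and smooth normalization $C$ of genus $p(g,k)-n$. Taking normalizations produces an irreducible subvariety of $\mathcal V^n_{g,k}$ whose closure is a component $I^0$. Since non-surjectivity of $W_{C,T}$ is an open condition and holds for the general member by the computation above, it holds on a dense open subset of $I^0$, proving the theorem. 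The two genuinely substantial points are the non-vanishing of $\delta$ on the blow-up (the two-form obstruction) and the irreducibility and dimension count of the Severi family.
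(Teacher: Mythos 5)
First, a caveat: the statement you were asked to prove is quoted in this paper from Kemeny's work \cite{kem1} and is not proved here; the closest internal analogue is the proof of Theorem \ref{teononsurjfgh} (modelled on Theorem \ref{sernesifontanari} of Fontanari--Sernesi). Your opening reduction agrees exactly with that strategy: blow up the surface at the $n$ nodes, identify $K_C(-T)$ with $\mathcal{O}_{\tilde X}(\tilde C)|_{\tilde C}=N_{\tilde C/\tilde X}$ and $W_{C,T}$ with the self-Gaussian $\Phi^1_{C,A}$ of $A=K_C(-T)$. Where you diverge is the obstruction step. The paper's route (and, essentially, Kemeny's and Fontanari--Sernesi's) is to verify that $\mathcal{E}=\pi^*(kL)-2\sum E_i$ is very ample on $\tilde X$ (a Reider-type computation, as in Proposition \ref{prop very amp}, and the place where the bound $n\le\frac{p(g,k)-2}{5}$ and the generality of the nodes are consumed), so that $C$ embedded by $A$ is scheme-theoretically a hyperplane section of the smooth surface $\tilde X$; L'vovsky's Theorem \ref{L'vosky} then gives non-surjectivity of $\Phi_{\omega_C,A}$ for free, and Proposition \ref{propositionfundamentalsurjection} with $L=\omega_C$, $T_n=T$, $M=A$ transfers this to $\operatorname{coker}(\Phi_{A,A})=\operatorname{coker}(W_{C,T})\neq 0$, using $h^1(\omega_C)=h^1(\omega_C-T)$, i.e.\ $h^0(\mathcal{O}_C(T))=1$.

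The genuine gap in your argument is the step you yourself flag as the ``main analytic obstacle''. Non-surjectivity of $p_*$ is \emph{not} equivalent to the non-splitting of the conormal sequence of $\tilde C\subset\tilde X$: the cokernel of $p_*$ is the image of the connecting map $\delta:H^0(\Omega^1_C\otimes A^2)\to H^1(C,A)$, which is cup product with the extension class in $H^1(C,T_C\otimes N^\vee_{\tilde C/\tilde X})$, and a nonzero class can perfectly well cup to zero on every global section. Moreover the two-form heuristic does not survive the blow-up in the form you invoke it: the isomorphism $\Omega^1\simeq T$ induced by the symplectic form on the K3 degenerates along the exceptional divisors (where $\pi^*\sigma$ vanishes), and there is no off-the-shelf bound $\operatorname{corank}\Phi^1_{C,A}\ge p_g(\tilde X)$ for a non-minimal surface. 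So the heart of the theorem is exactly the point your sketch leaves open; it is what L'vovsky's extendability theorem (or Wahl's cone-smoothability obstruction, which is the form Kemeny uses) is designed to settle, at the cost of first proving very ampleness of $\mathcal{E}$. Two further points are asserted rather than proved: the passage from $\Phi_{\omega_C,A}$ to the self-Gaussian $\Phi_{A,A}$ (which needs the cokernel-comparison lemma and the vanishing $h^0(\mathcal{O}_C(T))=1$), and the existence and irreducibility of the component $I^0$ with nodes in general position, which is a substantial theorem in its own right.
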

The same marked Wahl maps have been studied by Fontanari and Sernesi in \cite{fs1}, where  they proved, using very different methods from \cite{kem1}, the following theorem. 
\begin{teo}[\cite{fs1}]
\label{sernesifontanari}
Fix an integer $g \geq 9$ Let $(S,H)$ be a polarized $K3$ surface with  $Pic(S)=\mathbb{Z}H$ and  $H^2=2g-2$. Let $C$ be a smooth curve of genus $g-1$ endowed with a morphism $f: C \rightarrow S$ birational onto its image and such that $f(C) \in |H|$. If $T=P+Q \subseteq C$ is the divisor over the singular point of $f(C)$, then the Gaussian map $\Phi_{\omega_C - T,\omega_C-T}$ is not surjective. 
\end{teo}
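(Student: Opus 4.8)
The plan is to resolve the node, realise $\Phi_{\omega_C-T,\omega_C-T}$ as the restriction of a Gaussian map on the resulting smooth surface, and reduce non-surjectivity to a cohomological statement whose obstruction is the one-dimensional ``residue'' of the node.

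First I would blow up. Let $p\in S$ be the node of $f(C)$ and let $\sigma:\widetilde S\ra S$ be its blow-up, with exceptional curve $E$ and $K_{\widetilde S}=E$. The strict transform $\widetilde C$ is isomorphic to $C$, is smooth, and lies in $|\mathcal{L}|$ with $\mathcal{L}:=\sigma^*H-2E=\OO_{\widetilde S}(\widetilde C)$. Writing $A:=f^*H$ and $T=P+Q=\widetilde C\cap E$, adjunction on $\widetilde S$ gives $\omega_C=(\sigma^*H-E)|_{\widetilde C}=A(-T)$ and, crucially,
\[
L:=\omega_C(-T)=A(-2T)=\mathcal{L}|_{\widetilde C}=N_{\widetilde C/\widetilde S}.
\]
Since $S$ is a $K3$ surface, $H^1(\widetilde S,\OO_{\widetilde S})=H^1(S,\OO_S)=0$, so $0\ra\OO_{\widetilde S}\ra\mathcal{L}\ra\mathcal{L}|_{\widetilde C}\ra0$ shows that $H^0(\widetilde S,\mathcal{L})\ra H^0(C,L)$ is surjective, and hence so is $\bigwedge^2H^0(\widetilde S,\mathcal{L})\ra\bigwedge^2H^0(C,L)$.

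Next I would compare the curve map with the surface map. The conormal surjection $\Omega^1_{\widetilde S}|_{\widetilde C}\twoheadrightarrow\omega_C$ commutes with exterior differentiation, so $\Phi_{L,L}=p\comp\Phi^{(1)}$, where $\Phi^{(1)}:\bigwedge^2H^0(C,L)\ra H^0(C,\Omega^1_{\widetilde S}|_{\widetilde C}\otimes L^{\otimes2})$ is the Gaussian map valued in the ambient cotangent bundle and $p$ is induced by the conormal surjection tensored with $L^{\otimes2}$. The surface Gaussian map $\Phi^{\widetilde S}_{\mathcal{L}}:\bigwedge^2H^0(\widetilde S,\mathcal{L})\ra H^0(\widetilde S,\Omega^1_{\widetilde S}\otimes\mathcal{L}^{\otimes2})$ is compatible with restriction, and by the surjectivity above the image of $\Phi^{(1)}$ lies in the image of the restriction $\rho:H^0(\widetilde S,\Omega^1_{\widetilde S}\otimes\mathcal{L}^{\otimes2})\ra H^0(C,\Omega^1_{\widetilde S}|_{\widetilde C}\otimes L^{\otimes2})$. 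Hence $\operatorname{im}\Phi_{L,L}\subseteq\operatorname{im}(p\comp\rho)$, and it suffices to prove that
\[
p\comp\rho:H^0(\widetilde S,\Omega^1_{\widetilde S}\otimes\mathcal{L}^{\otimes2})\lra H^0\!\left(C,\omega_C^{3}(-2T)\right)
\]
is not surjective. Let $i:\widetilde C\hookrightarrow\widetilde S$ and let $\mathcal{K}$ be the kernel of the surjection $\Omega^1_{\widetilde S}\otimes\mathcal{L}^{\otimes2}\twoheadrightarrow i_*(\omega_C\otimes L^{\otimes2})$ obtained by restricting to $\widetilde C$ and projecting onto $\omega_C$; it sits in $0\ra\Omega^1_{\widetilde S}\otimes\mathcal{L}\ra\mathcal{K}\ra i_*L\ra0$. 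Then $\operatorname{coker}(p\comp\rho)$ injects into $H^1(\widetilde S,\mathcal{K})$, which in turn surjects onto the \emph{nonzero} line $H^1(C,L)=H^1(C,\omega_C-T)\cong H^0(C,\OO_C(T))^\vee$, the Serre dual of the divisor over the node.

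The hard part will be to show that a generator of this line genuinely survives in $\operatorname{coker}(p\comp\rho)$. This is where the hypotheses do the real work: the bound $g\geq9$ yields, via Serre duality, $H^2(\widetilde S,\Omega^1_{\widetilde S}\otimes\mathcal{L})\cong H^0(\widetilde S,\Omega^1_{\widetilde S}(2E-\sigma^*H))^\vee=0$ (no such sections survive pushforward to $S$ once $H$ is positive enough), so the node class lifts to $H^1(\widetilde S,\mathcal{K})$; the hypothesis $\Pic(S)=\ZZ H$ then controls $H^1(\widetilde S,\Omega^1_{\widetilde S}\otimes\mathcal{L})$ and $H^1(\widetilde S,\Omega^1_{\widetilde S}\otimes\mathcal{L}^{\otimes2})$ and forces the lifted class to die in the latter. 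Conceptually the subtlety is that, unlike in Wahl's theorem where $\omega_{\widetilde S}$ is trivial and the conormal extension of the curve is self-dual, here $\omega_{\widetilde S}=\OO(E)$, and the conormal class of $\widetilde C$ obstructs instead the \emph{mixed} map $\Phi_{\omega_C-T,\omega_C}$; the obstruction for the symmetric map $\Phi_{\omega_C-T,\omega_C-T}$ is precisely the node residue in $H^1(C,\omega_C-T)$, and verifying that it is not absorbed by the ambient groups $H^1(\widetilde S,\Omega^1_{\widetilde S}\otimes\mathcal{L}^{\otimes j})$ rests on the vanishing of sections of $\Omega^1_{\widetilde S}$ twisted by the negative bundles $\mathcal{L}^{-1}$ and $\mathcal{L}^{-2}$ pushed down to the $K3$ surface $S$.
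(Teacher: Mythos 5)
Your setup is sound as far as it goes: the blow-up, the identification $\omega_C(-T)=(\sigma^*H-2E)|_{\widetilde C}=N_{\widetilde C/\widetilde S}$, the surjectivity of $H^0(\widetilde S,\mathcal L)\to H^0(C,L)$, and the exact sequences placing $\operatorname{coker}(p\circ\rho)$ inside $H^1(\widetilde S,\mathcal K)$ are all correct. But the argument stops exactly where the theorem begins. Knowing that $H^1(\widetilde S,\mathcal K)$ surjects onto the line $H^1(C,L)$ says nothing about $\operatorname{coker}(p\circ\rho)$, which is the \emph{kernel} of $H^1(\widetilde S,\mathcal K)\to H^1(\widetilde S,\Omega^1_{\widetilde S}\otimes\mathcal L^{\otimes 2})$; you must show this kernel is nonzero, i.e.\ that some class in $H^1(\widetilde S,\mathcal K)$ dies in $H^1(\widetilde S,\Omega^1_{\widetilde S}\otimes\mathcal L^{\otimes 2})$. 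The vanishings you invoke to force this are neither proved nor available: $H^1$ of $\Omega^1$ twisted by an ample bundle on a surface is not covered by Nakano vanishing (which needs $p+q>2$), and the sentence ``$\Pic(S)=\ZZ H$ controls $H^1(\widetilde S,\Omega^1_{\widetilde S}\otimes\mathcal L^{\otimes j})$ and forces the lifted class to die in the latter'' is a restatement of the conclusion, not an argument. You even identify the obstacle yourself when you note that the conormal extension class naturally obstructs the \emph{mixed} map $\Phi_{\omega_C,\omega_C-T}$ rather than the symmetric one, but you supply no mechanism for transferring the obstruction.

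The proof the paper follows (Fontanari--Sernesi, and the same scheme as the paper's own Theorem~\ref{teononsurjfgh}) closes both gaps without any direct cohomology on $\widetilde S$. First one shows, using $\Pic(S)=\ZZ H$ and $g\ge 9$, that $\mathcal L=\sigma^*H-2E$ is very ample on $\widetilde S$; since $h^0(\widetilde S,\mathcal L)=h^0(C,L)+1$, the curve $\widetilde C$ is scheme-theoretically a hyperplane section of the smooth surface $\widetilde S$ in the embedding by $|\mathcal L|$, so Zak--L'vovsky (Theorem~\ref{L'vosky}) gives the non-surjectivity of $\Phi_{\omega_C,\,\omega_C-T}$ outright --- this replaces the unproved cohomological nonvanishing. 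Then Proposition~\ref{propositionfundamentalsurjection}, applied with $L=\omega_C$, $M=\omega_C-T$ and $T_n=T$ (after checking that $\omega_C-T$ is very ample and $h^1(\omega_C)=h^1(\omega_C-T)$), produces a surjection $\operatorname{coker}(\Phi_{\omega_C-T,\,\omega_C-T})\to\operatorname{coker}(\Phi_{\omega_C,\,\omega_C-T})$, and the nonvanishing of the target forces that of the source. To salvage your direct approach you would essentially have to reprove L'vovsky's extension-theoretic input by hand; as written, the key nonvanishing is asserted, not established.
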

Our paper deals with a similar problem for singular curves on Enriques surfaces. Let $(S,H)$ be a polarized Enriques surface and $C$ a smooth curve having a morphism $f: C \rightarrow S$ birational onto its image and  such that $f(C) \in |H|$ has exactly one ordinary $d$-point or a cusp, in case $d=2$. Denote by $T_d$ the divisor over the singular point and set $\alpha=f^*K_S$. Then $(C, \alpha, T_d)$ is a marked Prym curve. We investigate the behaviour of the following mixed Gaussian-Prym maps: 
\begin{equation}   
\label{prima}
\Phi^1_{C,\omega_C-T_d,\omega_C-T_d + \alpha}:R(\omega_C-T_d,\omega_C-T_d + \alpha) \rightarrow H^0(X, \omega^3_C  \otimes \alpha (-2T_d))
\end{equation}
and
\begin{equation}  
\label{seconda}
\Phi^1_{C,\omega_C,\omega_C-T_d + \alpha}:R(\omega_C,\omega_C-T_d + \alpha) \rightarrow H^0(X, \omega^3_C  \otimes \alpha (-T_d)).
\end{equation}
More precisely we have the following.
\begin{teo}
\label{teononsurjfgh}
Let $(S,H)$ be a polarized Enriques surface with $H^2=2g-2$ and let $d \geq 2$. Suppose that either
\begin{itemize}

\item[(i)] $S$ is a very general Enriques surface and $\phi(H) \geq \sqrt{2}(d+2)$, or
   
 \item[(ii)] $S$ in unnodal and $\phi(H) \geq2(d+1)$.  
\end{itemize}
Set $g'=g-\binom{d}{2}$ and let $C$ be a smooth curve of genus $g'$ having a birational morphism $f: C \ra S$ onto its image and  such that $f(C) \in |H|$, $f(C)$ has exactly one ordinary $d-$point or a cusp, in case $d=2$. Set  $\alpha=f^*K_{S_{|_{C}}}$ and let  $T_d=p_1+...+p_d$ be the divisor over the singular point. Then the Gaussian maps $\Phi_{\omega_C,\omega_C-T_d+\alpha}$ and  $\Phi_{\omega_C-T_{d},\omega_C-T_{d}+\alpha}$ are not surjective.
\end{teo}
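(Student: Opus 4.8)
The plan is to resolve the singularity of $f(C)$ by a single blow-up, to recognize the two bundles appearing in (\ref{prima}) and (\ref{seconda}) as restrictions of explicit line bundles on the resulting smooth surface, and then to invoke the fact that $C$ becomes a hyperplane section there. Concretely, let $\pi\colon\tilde S\to S$ be the blow-up of $S$ at the singular point of $f(C)$, with exceptional divisor $E$ ($E^2=-1$), and let $\tilde C\subset\tilde S$ be the strict transform of $f(C)$. Since $f$ is birational and $f(C)$ has a single ordinary $d$-point (or a cusp when $d=2$), the induced map $\tilde f\colon C\to\tilde C$ is an isomorphism and $\tilde C$ is smooth. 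I record the numerical dictionary $K_{\tilde S}=\pi^*K_S+E$ and $\tilde C\equiv\pi^*H-dE$; writing $\mathcal N:=\mathcal O_{\tilde S}(\tilde C)$, this gives $E|_{\tilde C}=T_d$, $K_{\tilde S}|_{\tilde C}=\alpha+T_d$, and, by adjunction, $\omega_C=(K_{\tilde S}+\tilde C)|_{\tilde C}$. Hence
\begin{equation*}
\mathcal N|_{\tilde C}=N_{\tilde C/\tilde S}=\omega_C\otimes(K_{\tilde S}|_{\tilde C})^{-1}=\omega_C-\alpha-T_d\equiv\omega_C-T_d+\alpha=:A,
\end{equation*}
using $-\alpha\equiv\alpha$, while $L:=K_{\tilde S}+\tilde C-E$ satisfies $L|_{\tilde C}=\omega_C-T_d=:B$. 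Thus $A$ is the very bundle $\omega_C-T_d+\alpha$ that appears as the second factor in both maps, and it is realized as the normal bundle of $\tilde C$ in $\tilde S$.

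The main technical step is to prove that $\mathcal N=\mathcal O_{\tilde S}(\pi^*H-dE)$ is very ample, so that $|\mathcal N|$ embeds $\tilde S$ as a smooth surface $X\subset\mathbb P^{n+1}$. This is precisely where hypotheses (i) and (ii) enter: very ampleness of $\pi^*H-dE$ means that $|H|$ separates points and tangent directions after imposing a point of multiplicity $d$ at the singular point, and the thresholds $\phi(H)\ge\sqrt2(d+2)$ for very general $S$ and $\phi(H)\ge2(d+1)$ for unnodal $S$ are the known bounds guaranteeing this on the two classes of Enriques surfaces. Granting this, $A=\mathcal N|_{\tilde C}$ is very ample; and since $H^1(\tilde S,\mathcal O_{\tilde S})=0$, the sequence $0\to\mathcal O_{\tilde S}\to\mathcal N\to A\to0$ yields $h^0(\mathcal N)=h^0(A)+1$ with surjective restriction, so that $\tilde C=C$ is scheme-theoretically a hyperplane section of $X$, embedded in the hyperplane $\mathbb P^n$ by the complete system $|A|$ (the same bounds force $n\ge3$). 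Applying Theorem~\ref{L'vosky} to $C\subset\mathbb P^n$, a hyperplane section of the smooth surface $X\subset\mathbb P^{n+1}$, gives that $\Phi_{\omega_C,A}=\Phi_{\omega_C,\omega_C-T_d+\alpha}$ is not surjective, which is exactly (\ref{seconda}).

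For (\ref{prima}) the first factor is $B=\omega_C-T_d=L|_{\tilde C}$ rather than $\omega_C=\omega_{\tilde C}$, so Theorem~\ref{L'vosky} does not apply verbatim, and this is the main obstacle. I would establish a twisted analogue in the spirit of the proof of Theorem~\ref{sernesifontanari}, comparing the curve Gaussian map $\Phi_{B,A}$ with cohomology on $\tilde S$: from the conormal sequence $0\to\mathcal O_{\tilde C}(-A)\to\Omega^1_{\tilde S}|_{\tilde C}\to\omega_{\tilde C}\to0$ twisted by $B\otimes A=(L\otimes\mathcal N)|_{\tilde C}$, together with the restriction from $\tilde S$ to $\tilde C$ of $\Omega^1_{\tilde S}\otimes L\otimes\mathcal N$ (whose obstruction is $H^1(\tilde S,\Omega^1_{\tilde S}\otimes L)$, since $\mathcal N\otimes\mathcal N^{-1}=\mathcal O_{\tilde S}$), a diagram chase produces a nonzero element of the cokernel of $\Phi_{B,A}$ coming from the non-split extension defining $\tilde C\subset\tilde S$. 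This is L'Vovsky's extension-theoretic argument carried out for the twist $\mathcal B=L$ in place of $\mathcal B=K_{\tilde S}+\tilde C$; the surjectivity of the auxiliary restriction maps and the non-vanishing it relies on follow once more from the positivity encoded in the $\phi(H)$-bounds of the previous step. This gives non-surjectivity of $\Phi_{\omega_C-T_d,\omega_C-T_d+\alpha}$ and completes the proof.

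In summary, I expect the two genuinely delicate points to be, first, the very-ampleness of $\pi^*H-dE$ on the Enriques blow-up, where the precise numerical thresholds and the split between the very general and unnodal cases must be read off from the classification of linear systems with an assigned fat point on Enriques surfaces, and second, the twisted version of Theorem~\ref{L'vosky} needed for (\ref{prima}), since the cited statement only yields $\omega_C$ in the first factor.
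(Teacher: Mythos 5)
Your treatment of the first map coincides with the paper's: blow up the $d$-fold point, identify $\mathcal{O}_{\tilde C}(\tilde C)=\omega_C-T_d+\alpha$, prove very ampleness of $\pi^*H-dE$ under the $\phi(H)$-bounds (the paper does this in Proposition \ref{prop very amp} via Reider's theorem applied to $\sigma^*(H+K_S)-(d+1)E$, after checking bigness and nefness through the comparison of $\phi$ with the Seshadri constant), observe that $C$ is then a hyperplane section of the embedded blow-up, and apply Theorem \ref{L'vosky}. Up to the fact that you defer the very-ampleness verification, this part is sound and is exactly the paper's argument.

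For the second map $\Phi_{\omega_C-T_d,\omega_C-T_d+\alpha}$ your proposal diverges, and this is where the gap lies. You propose to re-run L'vovsky's extension-theoretic argument with $\omega_C$ replaced by $L|_{\tilde C}=\omega_C-T_d$, asserting that ``a diagram chase produces a nonzero element of the cokernel of $\Phi_{B,A}$ coming from the non-split extension.'' That non-vanishing is precisely the content of L'vovsky's theorem and is not formal: the identification of the corank of the Gaussian map with a space of extensions depends on the first factor being $\omega_C$, and you give no argument that the relevant class survives the twist by $-T_d$. As written this step is a restatement of the goal, not a proof. The paper avoids the issue entirely with Proposition \ref{propositionfundamentalsurjection}: taking $L=\omega_C$, $M=\omega_C-T_d+\alpha$, $n=d$, one gets a surjection $\operatorname{coker}(\Phi_{\omega_C-T_d,\,M})\twoheadrightarrow\operatorname{coker}(\Phi_{\omega_C,\,M})$, so non-surjectivity of the second map follows from that of the first (already established) with no new geometry. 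The price is checking the hypotheses of that proposition --- very ampleness of $\omega_C$ and $\omega_C-T_d$ (Corollary \ref{corfond}) and $h^1(\omega_C)=h^1(\omega_C-T_d)$, which the paper obtains from $h^0(\omega_C-T_d)=h^0(\omega_C)-d$ via Kawamata--Viehweg vanishing for $\sigma^*H-(d+1)E$ --- none of which appears in your proposal. You should either supply the twisted L'vovsky statement with a genuine proof, or replace that half of your argument by the cokernel-comparison reduction.
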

Here, with ``general", we mean in a non empty Zariski-open subset of the moduli, with ``very general" we mean outside a countable union of proper Zariski-closed subset.  The proof is along  the same lines of Theorem \ref{sernesifontanari}

On the contrary, when one considers a general marked Prym curve the aforementioned maps are  ``tendendially" surjective. 
 Indeed, let $S$ be the following set:
\begin{equation}
S:=\{(g_1,d_1,d_2):  g_1 \geq 3, d_1 \geq 5, d_2 \geq 4, d_2(g_1-2)>d_1 \geq g_1+5, d_1 >d_2  \},
\end{equation}
and denote by $R_{g,d}$ the moduli space of $d-$marked Prym curve. We prove the following.
\begin{teo}
\label{propsurjgeneral23}
Let $(g_1,d_1,d_2)$ be in $S$ (\ref{equadhfd}), and $g=(g_1-2)d_2+d_1(d_2-1)+1$.  Let $d$ be an integer such that $ 2 \leq d \leq d_2$. If  $(C,\alpha,T_d )$ is a general element in $R_{g,d}$, then the Gaussian maps
$$
    \Phi_{C, \omega_C - T_d,\omega_C-T_d + \alpha}
    $$
    and
    $$
    \Phi_{C,\omega_C, \omega_C - T_d + \alpha}
    $$
    are  surjective.
\end{teo}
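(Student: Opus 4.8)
The plan is to establish surjectivity at a single, carefully chosen point of $R_{g,d}$ and then propagate it to the general point by semicontinuity. Concretely, over a suitable open substack $U \subseteq R_{g,d}$ the spaces $R(\omega_C - T_d, \omega_C - T_d + \alpha)$ and $R(\omega_C, \omega_C - T_d + \alpha)$, together with the targets $H^0(\omega_C^3\otimes\alpha(-2T_d))$ and $H^0(\omega_C^3\otimes\alpha(-T_d))$, are fibres of vector bundles (the relevant $h^0$'s and $h^1$'s being constant there), and the two Gaussian maps \eqref{prima} and \eqref{seconda} globalize to morphisms of these bundles. Since the rank of a morphism of vector bundles is lower semicontinuous, it suffices to exhibit one marked Prym curve --- possibly on the boundary of $R_{g,d}$, but smoothable to a general element --- at which both maps are surjective. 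Before doing so I would record the numerical check that surjectivity is at all possible, i.e. that $\dim R(\cdot,\cdot)$ exceeds the dimension of the corresponding target; this is exactly where the inequalities $d_2(g_1-2) > d_1 \ge g_1 + 5$ and $d_1 > d_2$ defining $S$ first enter, by forcing the source to be large enough and the target twists to be nonspecial.

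For the test object I would build, guided by the genus relation $g - 1 = (g_1-2)d_2 + d_1(d_2-1)$, a reducible nodal curve $C_0$. A natural candidate is a curve on a ruled surface $Y = \mathbb{P}(E) \to B$ over a fixed smooth curve $B$ of genus $g_1$ (with $E$ a rank-two bundle on $B$): the formula is roughly the arithmetic genus produced by adjunction for a divisor class of the shape $d_2\sigma + d_1 F$, and the factor $d_2 - 1$ attached to $d_1$ suggests degenerating the section part into $d_2$ rational tails, so that $C_0$ becomes a transversal union of $B$ with rational components. On $C_0$ I would put a limit of the Prym structure $\alpha$ --- a $2$-torsion line bundle, or an admissible double cover in the sense of the compactified Prym moduli --- and place the marked divisor $T_d = p_1 + \cdots + p_d$ on the smooth rational locus, so that $\omega_{C_0} - T_d$, $\omega_{C_0}-T_d+\alpha$ and their products restrict from explicit line bundles on the components.

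The heart of the argument is then to compute the Gaussian map of $C_0$ by peeling off components. Writing $C_0 = A \cup_\Delta Z$, with $Z$ the union of the rational components, one has the standard exact sequences expressing each Gaussian map on $C_0$ in terms of the Gaussian maps of the components, the multiplication maps of the restricted bundles, and connecting homomorphisms supported on the node set $\Delta$. The inequalities in $S$ should guarantee that all twisted bundles entering these sequences are nonspecial (so the multiplication maps are surjective by the base-point-free pencil trick and Green-type vanishing) and that the Gaussian maps on the rational components, being essentially maps on $\mathbb{P}^1$, are surjective in the numerically allowed range. The two maps are linked by the commutative square induced by the inclusion $\omega_C - T_d \hookrightarrow \omega_C$ in the first factor (the second factor $\omega_C - T_d + \alpha$ being common) and the inclusion $\omega_C^3\otimes\alpha(-2T_d)\hookrightarrow \omega_C^3\otimes\alpha(-T_d)$ on targets, so that controlling one map together with the $T_d$-cokernel yields the other; the condition $2 \le d \le d_2$ ensures that removing the $d$ marked points never destroys the vanishing used above.

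I expect the main obstacle to be the boundary analysis at $\Delta$: one must show that the connecting maps do not obstruct surjectivity, i.e. that the images of the component Gaussian maps, together with the node contributions, span the full target $H^0(C_0, \omega_{C_0}^3\otimes\alpha(-2T_d))$, resp. $H^0(C_0,\omega_{C_0}^3\otimes\alpha(-T_d))$. This is a transversality and dimension statement requiring a precise count of the node contributions against the degrees of the restricted bundles, and it is here that the strict inequality $d_2(g_1-2) > d_1$ is needed to keep the base-curve bundle positive enough. A secondary difficulty is verifying that the chosen limit Prym structure and marked points are genuinely smoothable to a general element of $R_{g,d}$, so that semicontinuity applies; this amounts to checking that $C_0$ is an unobstructed point of the relevant moduli with the expected tangent space, which I would settle by a standard deformation-theoretic argument.
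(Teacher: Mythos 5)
Your overall strategy (surjectivity at one special point plus semicontinuity over a locus where all the relevant $h^0$'s and the kernels of the multiplication maps have constant dimension) is the same as the paper's, and your numerical sanity checks are sensible. But the core of your argument has a genuine gap: you propose to verify surjectivity on a \emph{reducible nodal} curve $C_0 = A\cup_\Delta Z$ obtained by degenerating a divisor class $d_2\sigma + d_1F$ on a ruled surface into rational tails, and to compute its Gaussian maps by ``standard exact sequences expressing each Gaussian map on $C_0$ in terms of the Gaussian maps of the components.'' No such standard sequences are available off the shelf: for a nodal curve one must first decide what the Gaussian map even is (differentials versus the dualizing sheaf, torsion at the nodes, the definition of $R(L,M)$ for a non-integral curve), then control the connecting contributions at $\Delta$ --- precisely the ``transversality and dimension statement'' you yourself flag as the main obstacle and leave unresolved. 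In addition, your semicontinuity step requires the marked Prym Gaussian map to globalize to a morphism of vector bundles over a family whose central fibre is this nodal curve with a limit (admissible-cover) Prym structure; setting that up is itself a nontrivial piece of machinery that the proposal does not supply. As written, the heart of the proof is a plan, not an argument.

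For comparison, the paper sidesteps all of this by choosing a \emph{smooth} special curve: a member $C^*$ of $|p_1^*D_1 + d_2C_0|$ on the product $C_1\times\mathbb{P}^1$ with $C_1$ hyperelliptic, $\alpha$ pulled back from a $2$-torsion element of $\operatorname{Pic}^0(C_1)$, and $T_{d_2}$ the fibre of $p_1$ over a point of $\operatorname{supp}(D_1)$. On a product of curves the Gaussian map lifts, via K\"unneth, to $\Phi_{L_1,M_1}\otimes\Phi^0_{L_2,M_2}\oplus\Phi^0_{L_1,M_1}\otimes\Phi_{L_2,M_2}$, whose surjectivity follows from Bertram--Ein--Lazarsfeld and Mumford in the stated degree ranges; the constancy of the $h^0$'s near $C^*$ is checked with a gonality bound coming from Serrano's extension theorem. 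Since this construction only produces markings $T_{d_2}$ that are full fibres (hence exactly $d_2$ points), the paper then descends from $d_2$ to an arbitrary $2\le d\le d_2$ by the cokernel-comparison Proposition \ref{propositionfundamentalsurjection}, applied twice, together with very-ampleness criteria from Lemma \ref{lemmacliffandalph}. If you want to pursue your degeneration route you would need to either develop the nodal Gaussian-map formalism in full, or replace your reducible test curve by a smooth one on which the map is computable --- which is exactly what the product-surface choice achieves.
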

In  case $d=2,3$ or $d=4$ (see example \ref{exampledsds} ) we obtain the surjectivity for all genera greater  than or equal to $41$. More generally, for every $m$ we obtain infinitely many genera for which the marked Gaussian maps (we are considering) are surjective. We expect our result far from being sharp (see remark \ref{remarksbounds}).

We briefly explain how the paper is organized. In section \ref{section1} we recall the definition of the Gaussian maps and prove (proposition \ref{propositionfundamentalsurjection})  a slight modification of \cite{fs1}, Theorem $8$ (see also Theorem $9$). This is a result relating cokernels of Gaussian maps in different embeddings. 
In section \ref{section2} we prove  theorem \ref{teononsurjfgh}, following the same strategy of the proof of theorem \ref{sernesifontanari} (\cite{fs1}). In particular, we show some results regarding very ampleness of line bundles on the blow-up of Enriques surfaces.  In \ref{section3} we prove the surjectivity of the marked Prym-gaussian maps for a certain class of $d-$ marked Prym curves living in the product of two curves. In section \ref{section4} we give a lower bound for the gonality of curves living in the product of a curve with $\mathbb{P}^1$  (see proposition \ref{goncurvesprod} )  and we prove a lemma about very ample line bundles on curves. We will use them  in the proof of theorem \ref{propsurjgeneral23}. In section \ref{section5} we finally prove theorem \ref{propsurjgeneral23}.

\section*{Acknowledgments}
I deeply thank  Paola Frediani and Andreas Leopold Knutsen for several discussions about the paper and for their fundamental remarks and suggestions. I also want to thank Andrea Bruno,  Margherita Lelli Chiesa, Angelo Felice Lopez, and Edoardo Sernesi for a conversation about the paper and their suggestions.
\section{Cokernels of Wahl maps}
\label{section1}
We briefly recall the definition of the Gaussian maps, and their different interpretations, which will be used in the sequel. See for example \cite{wahl1} or \cite{wahl2} for the details. 
Let $X$ be a smooth complex algebraic variety. Let $L$ and $M$ be two line bundles on $X$. Let $q_i: X \times X \rightarrow X$, $i=1,2$ be the two projections. Consider the short exact sequence defining the square of the diagonal $\dg{X \times X}$ and tensor it with $q_1^*L \otimes q_2^*M$
\begin{equation}
0 \rightarrow I^2_{\dg{X \times X}} \otimes  q_1^*L \otimes q_2^*M  \rightarrow  I_{\dg{X \times X}} \otimes q_1^*L \otimes q_2^*M \rightarrow I_{\dg{X \times X}}/ I^2_{\dg{X \times X}} \otimes q_1^*L \otimes q_2^*M \rightarrow 0. 
\end{equation}
 The first Gaussian  map  associated with $L$ and $M$ is defined as the map induced at the level of global sections:
 $$
\Phi_{L,M}:H^0(X \times X,   I_{\dg{X \times X}} \otimes q_1^*L \otimes q_2^*M) \rightarrow H^0(X \times X, I_{\dg{X \times X}}/I^2_{\dg{X \times X}}  \otimes q_1^*L \otimes q_2^*M).
$$
Now let $\Phi^0_{L,M}: H^0(X, L) \otimes H^0(X,M) \rightarrow H^0(X, L \otimes M)$ be the multiplication map and denote by $R(L,M)$ its kernel.  Using standard identifications,  $\Phi_{L,M}$ can be thought as a map 
\begin{equation*}
    R(L,M) \rightarrow  H^0(X, \Omega^1_X \otimes L \otimes M).
\end{equation*}
 If $\alpha=\sum l_i \otimes m_i \in \text{Ker}( \phi_{L,M})$, $l_i=f_iS$, $m_i=s_iT$, where $S$ and $T$ are two local generators of $L$ and $M$, respectively, it is locally given by 
$\Phi_{L,M}(\alpha)=\sum (f_idg_i-g_idf_i) S \otimes T$.
Now we recall a different interpretation of the Gaussian maps in the hypothesis of  $L$ being  very ample and  giving an embedding in $\phi_L: X \xhookrightarrow{} \mathbb{P}^r$. 

Denote by $M_L$ the kernel of the evaluation map of sections of $L$, i.e.:
$$
0 \rightarrow M_L \rightarrow H^0(C,L) \otimes \mathcal{O}_C \rightarrow L \rightarrow 0. 
$$
Then $\phi_L^*\Omega^1_{\mathbb{P}^r}(1)=\Omega^1_{\mathbb{P}^r}(1)_{|_{X}} \simeq M_L$. Consider indeed the Euler sequence
\begin{equation*}
    0 \rightarrow \Omega_{\mathbb{P}^r} \rightarrow \mathcal{O}_{\mathbb{P}^r}(-1)^{r+1} \rightarrow \mathcal{O}_{\mathbb{P}^r}\rightarrow 0,
\end{equation*}
and tensor it with $\mathcal{O}_{P^r}(1)$:
\begin{equation*}
    0 \rightarrow \Omega^1_{\mathbb{P}^r}(1) \rightarrow H^0(\mathbb{P}^r,\mathcal{O}_{\mathbb{P}^r}(1))  \otimes \mathcal{O}_{\mathbb{P}^r} \rightarrow \mathcal{O}_{\mathbb{P}^r}(1)\rightarrow 0.
\end{equation*}
Pullbacking it  by $\phi_L$ we obtain
\begin{equation*}
0 \rightarrow \phi_L^*\Omega_{\mathbb{P}^r}(1) \rightarrow H^0(C,L) \otimes \mathcal{O}_C \rightarrow L \rightarrow 0,
\end{equation*}
and so we conclude.
Now consider a twist by $L \otimes M$ of  the conormal exact sequence:
\begin{equation}
\label{eqconorm}
0 \rightarrow N^{\vee}_{X / \mathbb{P}^r} \otimes L \otimes M \rightarrow M_L \otimes M \rightarrow \Omega_X \otimes L  \otimes M\rightarrow 0,  
\end{equation}
One can show that under the aforementioned identification,  
\begin{equation*}
\Phi_{L,M}: H^0(X,M_L \otimes M) \rightarrow H^0(X,\Omega_X \otimes L  \otimes M),
\end{equation*}
i.e. $\Phi_{L,M}$ is the map  induced at the level of global section in \ref{eqconorm}. 
Now we recall a very useful construction of Lazarsfeld.
\begin{proposition}[Lemma 1.4.1, \cite{laz89})]
\label{proplazz}
Let $p_1,...,p_n \in X$ be distinct points such that $L(-\sum_{i=1}^np_i)$ is generated by global sections, and $h^1(L(-\sum_{i=1}^np_i))=h^1(L)$. Then one has an exact sequence:
\begin{equation}
\label{lazcono}
0 \rightarrow M_{L(-\sum_{i=1}^np_i)} \rightarrow M_{L}  \rightarrow \bigoplus_{i=1}^n\mathcal{O}_{C}(-p_i) \rightarrow 0.
\end{equation}
\end{proposition}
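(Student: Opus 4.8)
The plan is to compare the two evaluation sequences defining $M_{L}$ and $M_{L'}$, where I set $L':=L(-\sum_{i=1}^{n}p_{i})$, and to extract the asserted sequence from the snake lemma. First I would record the dimension count. By Riemann--Roch on $C$ one has $h^{0}(L)-h^{0}(L')=(\deg L-\deg L')+(h^{1}(L)-h^{1}(L'))=n$, the last equality using the hypothesis $h^{1}(L')=h^{1}(L)$ together with $\deg L-\deg L'=n$. Since the kernel of the restriction map $H^{0}(C,L)\to\bigoplus_{i=1}^{n}L|_{p_{i}}$ is precisely $H^{0}(C,L')$, this count shows that restriction is surjective onto its $n$--dimensional target. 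Combined with the global generation of $L'$ (at points $q\neq p_{i}$ a nonvanishing section of $L'$ does the job, while at the $p_{i}$ the surjectivity of restriction provides one), it follows that $L$ is itself globally generated, so that $M_{L}$ genuinely fits into $0\to M_{L}\to H^{0}(L)\otimes\mathcal{O}_{C}\to L\to 0$.

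Next I would choose sections adapted to the points: using the surjectivity just established, pick $s_{1},\dots,s_{n}\in H^{0}(C,L)$ whose images form the standard basis of $\bigoplus_{i}L|_{p_{i}}$, that is, after trivializing $L$ near each $p_{i}$, with $s_{i}(p_{j})=\delta_{ij}$. These descend to a basis of $H^{0}(L)/H^{0}(L')$. I would then form the commutative ladder whose two rows are the evaluation sequences for $L'$ and for $L$, with vertical maps induced by the inclusions $H^{0}(L')\hookrightarrow H^{0}(L)$ and $L'\hookrightarrow L$; the left square commutes because both composites send $s'\otimes f$ to $fs'$ regarded in $L$, so the vertical map $M_{L'}\to M_{L}$ is defined.

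The heart of the argument is the snake lemma applied to this ladder. The middle and right vertical arrows are injective inclusions of locally free sheaves, so the connecting homomorphisms vanish and one is left with $0\to M_{L'}\to M_{L}\to\operatorname{coker}\to 0$, where $\operatorname{coker}=\ker\big((H^{0}(L)/H^{0}(L'))\otimes\mathcal{O}_{C}\to L/L'\big)$. Here $L/L'\cong\bigoplus_{i=1}^{n}\mathbb{C}_{p_{i}}$ is the torsion quotient supported at the $p_{i}$, and the induced arrow is the descended evaluation. In the chosen basis the map reads $\sum_{i}f_{i}s_{i}\mapsto(f_{1}(p_{1}),\dots,f_{n}(p_{n}))$, hence is diagonal with $i$--th component the evaluation $\mathcal{O}_{C}\to\mathbb{C}_{p_{i}}$; its kernel is therefore $\bigoplus_{i=1}^{n}\mathcal{O}_{C}(-p_{i})$, which is exactly the claimed cokernel.

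I expect the only genuinely delicate point to be the dimension count and the resulting surjectivity of the restriction map, for this is precisely where the hypothesis $h^{1}(L')=h^{1}(L)$ enters: it forces $\dim H^{0}(L)/H^{0}(L')=n$ and thereby guarantees the existence of the adapted sections $s_{i}$ that make the cokernel map diagonal. The global generation of $L'$ serves only to ensure that all evaluation sequences are right-exact, so that the snake lemma applies cleanly, and the remainder is a formal diagram chase.
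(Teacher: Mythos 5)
Your argument is correct and is essentially the standard one: the paper itself gives no proof of this proposition (it is quoted from Lazarsfeld, Lemma 1.4.1 of \cite{laz89}), and your ladder-plus-snake-lemma derivation, with the Riemann--Roch count forcing surjectivity of $H^0(L)\to\bigoplus_i L|_{p_i}$ and the adapted sections $s_i(p_j)=\delta_{ij}$ diagonalizing the cokernel map, is precisely how the cited lemma is proved. No gaps.
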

We now observe that a slight modification of \cite{fs1},Theorem $8$, gives the following result which relates cokernels of gaussian maps in different embeddings. In the following $X=C$ will be a smooth complex algebraic curve.  
\begin{proposition} \label{propositionfundamentalsurjection}
Let $C$ be a smooth  complex projective algebraic curve. Let $T_n=p_1+...+p_n$ be an effective divisor of degree $n$ on $C$ with $p_i \neq p_j$ for $i \neq j$. Let $L$ and $M$ be two very ample line bundles such that  $L-T_n$ is very ample and $h^1(L)=h^1(L-T_n)$. Then there exists a surjection between the cokernels of the gaussian maps:
\begin{equation*}
 \operatorname{coker}(\Phi_{L-T_n,M}) \rightarrow \operatorname{coker}(\Phi_{L,M}).   
\end{equation*}
\end{proposition}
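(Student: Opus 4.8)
The plan is to realize both Gaussian maps through the conormal interpretation recalled above and to compare them by means of Lazarsfeld's sequence. Write $L'=L-T_n$. Since $L'$ is very ample and $h^1(L)=h^1(L')$, the hypotheses of Proposition~\ref{proplazz} are satisfied, so tensoring (\ref{lazcono}) by $M$ produces
\begin{equation*}
0\to M_{L'}\otimes M\to M_L\otimes M\to\bigoplus_{i=1}^n M(-p_i)\to 0.
\end{equation*}
On the other hand, twisting the conormal sequence (\ref{eqconorm}) for the embeddings given by $L$ and by $L'$ presents $\Phi_{L,M}$ and $\Phi_{L',M}$ as the maps induced on global sections by the surjections $\gamma_L\colon M_L\otimes M\to\omega_C\otimes L\otimes M$ and $\gamma_{L'}\colon M_{L'}\otimes M\to\omega_C\otimes L'\otimes M$.

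First I would assemble these into a commutative diagram whose rows are the two (twisted) conormal sequences, whose left and middle vertical maps come from the Lazarsfeld inclusion, and whose right vertical map is the natural inclusion $\omega_C\otimes L'\otimes M\hookrightarrow\omega_C\otimes L\otimes M$ of sections vanishing on $T_n$. The one genuinely structural point — and this is exactly the ``slight modification'' of \cite{fs1}, Theorem~$8$ — is that this diagram commutes, i.e. that Lazarsfeld's description of $M_L$ is compatible with the local expression $s\,dt-t\,ds$ of the Gaussian maps. Granting this, the snake lemma applied to the two rows, together with the surjectivity of $\gamma_L$ and $\gamma_{L'}$, identifies the induced map on the quotients with a surjection
\begin{equation*}
\bar\gamma\colon\bigoplus_{i=1}^n M(-p_i)\twoheadrightarrow\bigoplus_{i=1}^n(\omega_C\otimes L\otimes M)|_{p_i}.
\end{equation*}

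Passing to cohomology, the diagram becomes a morphism of the two-term complexes $[\,R(L',M)\to H^0(\omega_C\otimes L'\otimes M)\,]\to[\,R(L,M)\to H^0(\omega_C\otimes L\otimes M)\,]$, and the long exact sequence of its mapping cone yields
\begin{equation*}
\operatorname{coker}(\Phi_{L',M})\to\operatorname{coker}(\Phi_{L,M})\to\operatorname{coker}\!\big(\ker\partial_1\xrightarrow{\ \bar\gamma\ }\ker\partial_2\big)\to 0,
\end{equation*}
where $\partial_1,\partial_2$ are the connecting maps on $H^0$ of the Lazarsfeld sequence and of the evaluation sequence $0\to\omega_C\otimes L'\otimes M\to\omega_C\otimes L\otimes M\to\bigoplus_i(\omega_C\otimes L\otimes M)|_{p_i}\to0$. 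Hence the map $\operatorname{coker}(\Phi_{L',M})\to\operatorname{coker}(\Phi_{L,M})$ exists and is surjective precisely when $\bar\gamma$ carries $\ker\partial_1=\operatorname{im}(R(L,M)\to\bigoplus M(-p_i))$ onto $\ker\partial_2=\operatorname{im}(H^0(\omega_C\otimes L\otimes M)\to\bigoplus(\omega_C\otimes L\otimes M)|_{p_i})$. Chasing the commutative square relating $\Phi_{L,M}$, the evaluation, and $\bar\gamma$, this condition is equivalent to the single equality
\begin{equation*}
H^0(\omega_C\otimes L\otimes M)=\operatorname{im}(\Phi_{L,M})+H^0\big(\omega_C\otimes L\otimes M(-T_n)\big);
\end{equation*}
that is, modulo sections vanishing on $T_n$, the image of the Gaussian map fills the evaluations at $p_1,\dots,p_n$.

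This last equality is the real content, and it is where the positivity hypotheses enter. I would deduce it, exactly as in \cite{fs1}, from the surjectivity of the composite $H^0(M_L\otimes M)\xrightarrow{\gamma_L}H^0(\omega_C\otimes L\otimes M)\to\bigoplus_i(\omega_C\otimes L\otimes M)|_{p_i}$: indeed $\gamma_L$ is fibrewise surjective, and the very ampleness of $L$ and $M$ forces $M_L\otimes M$ to be generated by global sections along $T_n$, so that these already surject onto $\bigoplus_i(M_L\otimes M)|_{p_i}$. I expect the main obstacle to be precisely the commutativity asserted in the second step, namely the compatibility of Lazarsfeld's sequence (\ref{lazcono}) with the conormal/Gaussian description (\ref{eqconorm}); once that is in place, the conclusion follows from the snake lemma and the global generation just described. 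A secondary point to monitor is the control of the connecting maps $\partial_1,\partial_2$, which is what the hypothesis $h^1(L)=h^1(L')$ ultimately provides by keeping (\ref{lazcono}) exact and aligning the relevant first cohomology groups.
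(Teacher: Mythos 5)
Your reduction is sound up to a point: the commutative square relating $\Phi_{L-T_n,M}$ and $\Phi_{L,M}$ through the inclusion $H^0(\omega_C\otimes L\otimes M(-T_n))\hookrightarrow H^0(\omega_C\otimes L\otimes M)$ shows directly (no mapping cone needed) that the induced map of cokernels is surjective if and only if
$$
H^0(\omega_C\otimes L\otimes M)=\operatorname{im}(\Phi_{L,M})+H^0(\omega_C\otimes L\otimes M(-T_n)).
$$
The gap is in your last step. Since $L-T_n$ is effective, $\deg(\omega_C\otimes L\otimes M(-T_n))>2g-2$, so the evaluation $H^0(\omega_C\otimes L\otimes M)\to\bigoplus_i(\omega_C\otimes L\otimes M)|_{p_i}$ is already surjective; hence your displayed equality is \emph{equivalent} to the surjectivity of $\mathrm{ev}_{T_n}\circ\Phi_{L,M}$, which is exactly what you then assert via ``$M_L\otimes M$ is generated by global sections along $T_n$''. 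But global generation of $M_L\otimes M$ at $p$ is not a formal consequence of very ampleness of $L$ and $M$: a dimension count on $0\to M_L\otimes M(-p)\to H^0(L)\otimes M(-p)\to L\otimes M(-p)\to 0$ shows it is equivalent to
$$
\operatorname{im}(\mu_{L,M})\cap H^0(L\otimes M(-p))=\operatorname{im}\bigl(H^0(L)\otimes H^0(M(-p))\to H^0(L\otimes M(-p))\bigr),
$$
a surjectivity statement for multiplication maps that requires positivity well beyond very ampleness. So your argument ends by asserting a claim that is essentially a restatement of the proposition, and the hypotheses you cite do not deliver it.

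The paper closes the argument by a mechanism your proof never touches: it completes the two conormal sequences and the Lazarsfeld sequence \eqref{lazcono} to a full $3\times3$ diagram whose \emph{left} column is $0\to N^{\vee}_{C/\mathbb{P}^{r-n}}\otimes L(-T_n)\otimes M\to N^{\vee}_{C/\mathbb{P}^{r}}\otimes L\otimes M\to\bigoplus_i M(-2p_i)\to 0$ and whose bottom row is $0\to\bigoplus_i M(-2p_i)\xrightarrow{g}\bigoplus_i M(-p_i)\to\bigoplus_i M|_{p_i}(-p_i)\to 0$. Passing to $H^1$, the two cokernels become the kernels of $H^1(N^{\vee}\otimes\cdots)\to H^1(M_{\bullet}\otimes M)$ in the two embeddings, and the conclusion is read off from $\ker(H^1(g))=0$, which follows from the surjectivity of $H^1(g)$ together with the elementary count $h^1(M(-2p_i))=h^1(M(-p_i))$, valid for any very ample $M$. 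That is where the hypothesis on $M$ actually does its work; by contrast, the compatibility of \eqref{lazcono} with \eqref{eqconorm}, which you flag as the main obstacle, is routine. To repair your proof you would either need to import this $H^1$-side computation on the conormal bundles, or prove the global generation of $M_L\otimes M$ along $T_n$ under hypotheses you have not stated.
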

The proof follows the same steps of \cite{fs1},Theorem $8$. We present it for completeness. 
\begin{proof}
Consider the following commutative diagram.
%
%
%
The first two rows are \eqref{eqconorm} for the line bundles $L$ and $L-T_n$, the second column is \eqref{lazcono} twisted by $M$, and  the third column is just the restriction modulo the identification $\omega_C L \otimes \mathcal{O}_T  \simeq \bigoplus_{i=1}^n\mathcal{O}_{p_{i}}(-p_i)$, and then twisted by $M$. 
\begin{center}
\begin{tikzcd}
& 0 \arrow[d] & 0 \arrow[d] & 0 \arrow[d]\\
    
0 \arrow[r] &  N^{ \vee}_{C/\mathbb{P}^{r-n}} \otimes L(-T_n)\otimes M \arrow[r,] \arrow[d,]
    & M_{L(-T_n)} \otimes M\arrow[r,] \arrow[d,] & \omega_C \otimes L(-T) \otimes M \arrow[r]\arrow[d] & 0 \\
    
0  \arrow[r]    & N^{ \vee}_{C/\mathbb{P}^{r}} \otimes L \otimes M  \arrow[d] \arrow[r,]
&  M_{L}  \otimes M \arrow[d] \arrow[r] & \omega_C \otimes L \otimes M \arrow[d] \arrow[r] & 0 \\

0  \arrow[r]    &  \bigoplus_{i=1}^n M(-2p_i)  \arrow[d] \arrow[r,"g"]
&  \bigoplus_{i=1}^n M(-p_i) \arrow[d] \arrow[r] & \bigoplus_{i=1}^n M_{|_{p_{i}}}(-p_i) \arrow[d] \arrow[r] & 0\\
& 0 & 0 & 0 & 
\end{tikzcd}
\end{center}
Passing to cohomology we obtain 
\begin{center}
\begin{tikzcd}
& 0 \arrow[d] & 0 \arrow[d] & 0 \arrow[d]\\
    
0 \arrow[r] &  \operatorname{coker}(\Phi_{L(-T_{n})},M)  \arrow[r,] \arrow[d,]
    & H^1(N^{ \vee}_{C/\mathbb{P}^{r-n}} \otimes L \otimes(-T_n) \otimes M)  \arrow[r,] \arrow[d,] & H^1(M_{L(-T_{n})}) \otimes M ) \arrow[r]\arrow[d] & 0 \\
    
0  \arrow[r] &  \operatorname{coker}(\Phi_{L,M})  \arrow[r,] \arrow[d,]
    & H^1(N^{ \vee}_{C/\mathbb{P}^{r}} \otimes L \otimes M)  \arrow[r,] \arrow[d,] & H^1( M_L \otimes M) \arrow[r]\arrow[d] & 0 \\

0  \arrow[r,]    & \operatorname{ker(H^1(g))}  \arrow[r,]
&  H^1( \bigoplus M(-2p_i))  \arrow[r,"H^1(g)"] & H^1( \bigoplus M(-p_i))  \arrow[r] & 0\\
\end{tikzcd}
\end{center}
Being $M$ very ample we have that  $h^1(\bigoplus M(-2p_i))=h^1( \bigoplus M(-p_i))$. Then  $\text{ker}(H^1(g))=0$. 
\end{proof}
\section{ Non surjectivity}
\label{section2}
In this section we are going to prove theorem \ref{teononsurjfgh}.
We proceed in a similar  way as in \cite{fs1}: we will obtain the non-surjectivity result  applying theorem (\ref{L'vosky}) and a result about very ampleness of line bundles on the blow-up of Enriques surfaces.

Let $S$ be an Enriques surface. First we recall the definition of two positivity measures: the $\phi-$function and the Seshadri constant of a big and nef line bundle $H$ on $S$. The first one is defined as 
	$$
	\phi(H):=\text{min}\{|H \cdot F|: F\in \Pic(S), F^2=0, F \not \equiv 0\},
	$$
where $\equiv$ denotes the numerical equivalence relation. Now set $\epsilon(H,x):=\inf\limits_{x \in C} \frac{H \cdot C}{\text{mult}_{x}C} $, where the infimum is taken over all curves $C$ passing trough $x$. The Seshadri constant $\epsilon(H)$ of $H$ is defined as 
 $$
	\epsilon(H):=\inf\limits_{x \in X} \epsilon(H,x).
	$$
The following holds:
 \begin{equation}
 \label{dis}
0\leq \epsilon(H)^2 \leq \phi(H)^2\leq H^2. 
 \end{equation}
For background and proofs see for example 
\cite{dolgachev}. Now let $\sigma: S' \rightarrow S$ be the blow-up in a point $p$. 
 We will now give, in terms of $\phi$, sufficient conditions for a line bundle of the form $\sigma^*H-lE$ to be big and nef. 

In the following, when we say a ``very general" Enriques surface, we mean that as a point in the moduli space of Enriques surfaces, it lives outside a countable union Zariski-closed subsets. We also recall that a  nodal Enriques surface is one which contains $-2$ curves. In the moduli space of Enriques surface theese correspond to a divisor. An Enriques surface not containing a $-2$ curve is usually called unnodal. 
%
\begin{proposition}
\label{lemminorte}
Let $S$ be an Enriques surface and and $l \geq 1$ be an integer. Let $H$ be a big and nef line  bundle on $S$ 
and suppose one of the following holds:
\begin{itemize}
    \item[i)] $S$ is a very general Enriques surface, $\phi(H)=l$ and $H$ is not of the type $H \equiv \frac{l}{2}(E_1+E_2)$ with $E_i$, $i=1,2$, effective isotropic vectors such that $E_1 \cdot E_2=2$.
    \item[ii)] $S$ is a very general Enriques surface and $\phi(H) \geq l+1$. 
    \item[iii)] $S$ is unnodal and $\phi(H) \geq 2l$.
\end{itemize}. Then $\sigma^*H-lE$ is big and nef.
\end{proposition}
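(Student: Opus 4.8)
The plan is to re-express both properties as intersection-theoretic conditions on the blow-up $\sigma\colon S'\to S$ and then deduce them from the positivity of $H$, as measured by $\phi(H)$ and the Seshadri constant $\epsilon(H)$ via \eqref{dis}. First I would record the numerics. With $E$ the exceptional divisor one has $E^2=-1$, $\sigma^*H\cdot E=0$ and $(\sigma^*H)^2=H^2$, so that
\[
(\sigma^*H-lE)^2=H^2-l^2\quad\text{and}\quad(\sigma^*H-lE)\cdot E=l\ge 1 .
\]
Any irreducible curve on $S'$ other than $E$ is a strict transform $\tilde C=\sigma^*C-mE$ of an irreducible $C\subset S$ with $m=\operatorname{mult}_pC$, and $(\sigma^*H-lE)\cdot\tilde C=H\cdot C-lm$. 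Thus $\sigma^*H-lE$ is nef if and only if $H\cdot C\ge l\cdot\operatorname{mult}_pC$ for every irreducible $C$, that is if and only if $\epsilon(H,p)\ge l$; as $\epsilon(H)=\inf_x\epsilon(H,x)$ it is enough to prove $\epsilon(H)\ge l$. Since a nef divisor on a surface is big precisely when its self-intersection is positive, once nefness is established bigness reduces to the inequality $H^2>l^2$.

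Bigness is the easy half. In cases (ii) and (iii), \eqref{dis} gives $H^2\ge\phi(H)^2\ge(l+1)^2$ and $H^2\ge\phi(H)^2\ge(2l)^2$, both $>l^2$. In case (i) one only gets $H^2\ge\phi(H)^2=l^2$ from \eqref{dis}, so here I would invoke the classification of the equality case (see \cite{dolgachev}): $H^2=\phi(H)^2$ holds exactly when $H\equiv\frac{\phi(H)}2(E_1+E_2)$ with $E_1,E_2$ isotropic and $E_1\cdot E_2=2$, which is precisely the type excluded in (i); hence $H^2>l^2$.

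For nefness I would use that in all three cases $S$ is unnodal (a very general Enriques surface contains no $-2$ curve), so every irreducible curve satisfies $C^2=2p_a(C)-2\ge 0$ and is nef. A convenient auxiliary estimate is that every irreducible $C$ with $C^2\ge 0$, $C\not\equiv 0$ obeys $H\cdot C\ge\phi(H)$: if $C^2=0$ then $C\equiv kF$ with $F$ primitive isotropic and $H\cdot C=k(H\cdot F)\ge\phi(H)$, while if $C^2\ge 1$ the Hodge index theorem and \eqref{dis} give $(H\cdot C)^2\ge H^2C^2\ge\phi(H)^2$. This disposes of every curve with $\operatorname{mult}_pC\le 1$, and, together with the adjunction bound $C^2\ge m(m-1)-2$, of all curves of sufficiently large multiplicity, since then $(H\cdot C)^2\ge\phi(H)^2(m^2-m-2)\ge l^2m^2$.

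The step I expect to be the main obstacle is the range of small multiplicities (already $m=2$, and in case (ii) a whole range of $m$ depending on $l$), where the crude Hodge-index and adjunction bounds are not sharp: a singular half-fiber through $p$ shows that $\epsilon(H)$ can drop to $\phi(H)/2$. Here the hypotheses must be used in full. For unnodal $S$ this is exactly the worst possible ratio, yielding $\epsilon(H)\ge\phi(H)/2\ge l$ and hence case (iii). For a very general $S$ one may further assume that all half-fibers, and more generally the members of the finitely many low isotropic classes, are smooth, so that the curves which could make $\epsilon(H)$ submaximal are ruled out; this is what upgrades the bound to $\epsilon(H)=\phi(H)=l$ in case (i) and $\epsilon(H)\ge\phi(H)-1\ge l$ in case (ii). Carrying out this classification of Seshadri-critical curves on a very general Enriques surface—showing that the dangerous singular curves of low self-intersection do not occur—is the delicate heart of the argument, and the thresholds $\phi(H)=l$, $\phi(H)\ge l+1$ and $\phi(H)\ge 2l$ are calibrated precisely so that it goes through, paralleling the strategy of \cite{fs1}.
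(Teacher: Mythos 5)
Your reduction is the right one and matches the paper's: by \cite{laz2}, Proposition 5.1.5, nefness of $\sigma^*H-lE$ is equivalent to $\epsilon(H,p)\ge l$, and once nefness is known, bigness is equivalent to $(\sigma^*H-lE)^2=H^2-l^2>0$. Your treatment of bigness is complete and essentially identical to the paper's: in cases (ii) and (iii) the inequality $H^2\ge\phi(H)^2$ from \eqref{dis} already gives $H^2>l^2$, and in case (i) the boundary case $H^2=\phi(H)^2=l^2$ is excluded by the classification of such polarizations as $H\equiv\frac{l}{2}(E_1+E_2)$ with $E_1\cdot E_2=2$ (the paper cites \cite{knun}, Proposition 1.4, for this).

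The gap is in the nefness half. You correctly identify the two facts that would finish the proof --- $\epsilon(H)=\phi(H)$ on a very general Enriques surface, and $\epsilon(H)\ge\frac12\phi(H)$ on an unnodal one --- but you neither prove them nor cite them, and the route you sketch cannot close them. In case (i) you need the \emph{exact equality} $\epsilon(H)=\phi(H)=l$; the Hodge-index-plus-adjunction estimate gives only $(H\cdot C)^2\ge\phi(H)^2\bigl(m^2-m-2\bigr)$ for a curve of multiplicity $m$ at $p$, and since $m^2/(m^2-m-2)>1$ for every $m\ge2$ this never yields $H\cdot C\ge \phi(H)\,m$; so every singular curve through $p$ is left unhandled, for all $m\ge 2$, not just small $m$. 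Your intermediate claim $\epsilon(H)\ge\phi(H)-1$ in case (ii) is likewise asserted without justification. You explicitly defer the ``classification of Seshadri-critical curves'' as the delicate heart of the argument --- but that classification \emph{is} the content of the step: these are nontrivial results of Galati and Knutsen (\cite{galk}, Theorem 1.3 for the very general case and Corollary 4.5 for the unnodal case), and the paper's proof consists precisely of invoking them. As written, your proposal establishes bigness but leaves nefness resting on unproved statements.
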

\begin{proof}
First we show that $\sigma^*H-lE$ is nef. From \cite{laz2}, proposition 5.1.5.,  it  follows that $\sigma^*(H)-lE$ is nef  if and only if $\epsilon(H) \geq l$. 
In \cite{galk}, Theorem $1.3$, it is shown that if $S$ is a very general Enriques surface then $\phi(H)=\epsilon(H)$. Then, in case $i)$ or $ii)$ we conclude. Now suppose we are in situation $iii)$. From 
\cite{galk}, Corollary $4.5$, it follows that 
$\epsilon(H) \geq \frac{1}{2}\phi(H) \geq l$ and we immediately conclude. 

From  \ref{dis} and the hypothesis $l \geq 1$ in case $ii$ and $iii)$ we get $H^2\geq \phi(H)^2>0$ and hence $\sigma^*H-lE$ is also big.  Consider now the situation $i)$ and suppose that $\sigma^*H-lE$ is not big, i.e. $H^2=l^2$. Then, again by  \ref{dis}, we have $H^2=\phi^2=l^2$. By \cite{knun}, Proposion $1.4$, we must have $H \equiv l(E_1+E_2)$, where $E_i$, $i=1,2$ are isotropic effective divisor such that $E_1 \cdot E_2=2$. 
\end{proof}
The proof of the next result is a direct application of Reider's Theorem (see \cite{reider}, Theorem $1$, or \cite{dolgachev}, Theorem $2.4.5$).
\begin{proposition}
\label{prop very amp}Let $l \geq 1$ and let $(S,H)$ be a polarized Enriques surface. Suppose that either
\begin{itemize}

\item[(i)] $S$ is a very general Enriques surface, $l \geq 1$ and $\phi(H) \geq \sqrt{2}(l+2)$, or 
   
 \item[(ii)] $S$ in unnodal, $l=1$ and $\phi(H) \geq 3\sqrt{2}$, or $l \geq 2$ and $\phi(H) \geq2(l+1)$.  
\end{itemize}
Let $\sigma: S' \rightarrow S$ be the blow-up in a point and  $E$ be  the exceptional divisor. Then $\sigma^*H-lE$ is a very ample line bundle  on $S'$.
\end{proposition}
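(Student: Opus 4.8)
The plan is to realize $\sigma^*H-lE$ as an adjoint bundle and apply Reider's criterion for very ampleness. Write $L:=\sigma^*H-lE$. Since $K_{S'}=\sigma^*K_S+E$ and $K_S$ is numerically trivial on the Enriques surface $S$, one has the identity
$$
L=K_{S'}+D,\qquad D:=\sigma^*(H-K_S)-(l+1)E\equiv \sigma^*H-(l+1)E .
$$
First I would check that $D$ is big and nef by invoking Proposition \ref{lemminorte} applied to $H$ and the integer $l+1$: in case (i) the hypothesis $\phi(H)\ge \sqrt{2}(l+2)>(l+1)+1$ places us in \ref{lemminorte}(ii), while in the unnodal case the hypothesis $\phi(H)\ge 2(l+1)$ is exactly \ref{lemminorte}(iii) for the integer $l+1$ (for $l=1$ the bound $\phi(H)\ge 3\sqrt2$ gives $\phi(H)\ge 4$, which suffices). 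Using $E^2=-1$ and $\sigma^*H\cdot E=0$ one computes $D^2=H^2-(l+1)^2\ge \phi(H)^2-(l+1)^2$ by \ref{dis}; substituting the respective lower bounds for $\phi(H)$ yields $D^2\ge 2(l+2)^2-(l+1)^2=l^2+6l+7\ge 14$ in case (i), and $D^2\ge 3(l+1)^2$ (resp. $\ge 14$ for $l=1$) in the unnodal case, so in all cases $D^2\ge 10$. Reider's Theorem then reduces the very ampleness of $L=K_{S'}+D$ to excluding an effective divisor $\Gamma$ on $S'$ with one of the numerical types $D\cdot\Gamma=0,\ \Gamma^2\in\{-1,-2\}$;\ $D\cdot\Gamma=1,\ \Gamma^2\in\{0,-1\}$;\ or $D\cdot\Gamma=2,\ \Gamma^2=0$.

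The heart of the argument, and the step I expect to be the main obstacle, is ruling out these exceptional configurations. I would write the class of $\Gamma$ as $\sigma^*B-mE$ with $B=\sigma_*\Gamma$ an effective class on $S$ and $m=\Gamma\cdot E\in\mathbb{Z}$, so that
$$
D\cdot\Gamma=H\cdot B-m(l+1),\qquad \Gamma^2=B^2-m^2 .
$$
A key preliminary observation is that in both cases $S$ is unnodal (a very general Enriques surface lies off the nodal divisor in the moduli space, as recalled above), so $S$ carries no $(-2)$-curves and, being minimal with numerically trivial canonical class, no $(-1)$-curves either. Hence every irreducible curve has non-negative self-intersection, and therefore every effective class $B$ satisfies $B^2\ge 0$. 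Combined with the equations above this forces $B\not\equiv 0$ and, since $H$ is big and nef, $H\cdot B>0$; this already excludes $m\le -1$ (it would give $H\cdot B\le 0$) and excludes $m=0$ whenever $\Gamma^2<0$ (it would give $B^2<0$).

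It then remains to treat $m\ge 1$, where I would split according to $B^2=0$ or $B^2>0$. If $B^2=0$ then $B$ is a nonzero effective isotropic class, so $H\cdot B\ge \phi(H)$ by the very definition of the $\phi$-function; but then $\Gamma^2=-m^2\in\{-2,-1,0\}$ forces $m=1$, whence $H\cdot B=D\cdot\Gamma+(l+1)\le l+2<\phi(H)$, a contradiction. If $B^2>0$, the constraints $\Gamma^2<0$ and $B^2=m^2+\Gamma^2>0$ force $m\ge 2$, and I would apply the Hodge index theorem on $S$, $H^2\,B^2\le (H\cdot B)^2$, together with $H^2\ge \phi(H)^2\ge 2(l+2)^2$ (resp. $\ge 4(l+1)^2$ in the unnodal case). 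A direct comparison shows the left-hand side always exceeds the right-hand side, the tightest case being $D\cdot\Gamma=0,\ \Gamma^2=-2,\ m=2$, where Hodge index would require $H^2\le 2(l+1)^2$, contradicting $H^2\ge 2(l+2)^2$; larger $m$ and the larger unnodal bound only widen the gap.

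This exhausts the finitely many Reider configurations, and the very ampleness of $L=\sigma^*H-lE$ follows formally. The only genuinely delicate points are the bookkeeping of these exceptional types and verifying that each chosen bound on $\phi(H)$ (in particular the sharper $\phi(H)\ge 2(l+1)$, resp. $3\sqrt2$ for $l=1$, in the unnodal case, which compensates for the weaker estimate $\epsilon(H)\ge \tfrac12\phi(H)$ used in Proposition \ref{lemminorte}) is strong enough in every one of them; the rest of the proof is the standard adjunction-plus-Reider mechanism.
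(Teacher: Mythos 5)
Your proof follows essentially the same route as the paper's: the adjoint decomposition $\sigma^*H-lE=K_{S'}+D$ with $D\equiv\sigma^*H-(l+1)E$, bigness and nefness of $D$ via Proposition \ref{lemminorte}, and Reider's criterion with the exceptional configurations eliminated by the Hodge index theorem and the definition of $\phi$; in particular your extremal case ($D\cdot\Gamma=0$, $\Gamma^2=-2$, $m=2$, forcing $H^2\le 2(l+1)^2$) is exactly where the hypothesis $\phi(H)\ge\sqrt2(l+2)$ is needed. The only cosmetic slips are that your trichotomy skips the configuration $m=0$, $\Gamma^2=B^2=0$ (killed by the same inequality $H\cdot B\ge\phi(H)>2$) and folds the sub-case $\Gamma^2=0$, $B^2>0$ under the ``$\Gamma^2<0$'' heading, though the identical Hodge-index computation disposes of it.
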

\begin{proof}
First observe that $\sigma^*H-lE=\sigma^*(H+K_{S})-(l+1)E + K_{S'}$. Set $H'=H+K_{S}$. By proposition \ref{lemminorte} $\sigma^*H' -(l+1)E$ is big and nef. Indeed $\phi(H')=\phi(H)\geq \sqrt{2}(l+2)\geq l+2$ in case $(i)$ and $\phi(H') \geq 2(l+1)$ in case $(ii)$.  Observe that is also effective. Indeed suppose by contradiction it is not.  Then, by Riemann-Roch and Serre duality, $K_{S'} \otimes (\sigma^*H' -(l+1)E)^{\vee}= -(\sigma^*H-(l+2)E)$ is effective.  Now take a nef effective divisor  $L$ in $S$. Since $\sigma^*L$ is also nef we obtain $ 0\leq \sigma^*L \cdot (-(\sigma^*H-(l+2)E)) =  -L \cdot H <0 $, where the latter is just the fact that $H$ is ample and $L$ effective. Then we conclude that $\sigma^*H' -(l+1)E$ is effective. Now suppose by contradiction that $\sigma^*H -lE$ is not very ample.  Since $\sigma^*H' -(l+1)E$ is an effective, big and nef divisor and $H^2\geq\phi^2(H) \geq 9+(l+1)^2$ in both cases $(i)$ and $(ii)$, we can apply Reider's theorem. 
%
%
%
Then there exists a non trivial effective divisor $D$ in $S'$ 
such that either one of the following holds:
\begin{itemize}
    \item[(a)] $D^2=0$ \ \text{and} \ $(\sigma^*H'-(l+1)E)D \leq 2$;
    \item[(b)] $D^2=-1$ \ \text{and} \ $(\sigma^*H'-(l+1)E)D \leq 1$;
    \item[(c)] $D^2=-2$ \ \text{and} \ $(\sigma^*H'-(l+1)E)D=0$;
    \item[(d)] $(\sigma^*H'-(l+1)E)^2=9$, $D^2=1$ \ \text{and} \ $(\sigma^*H'-(l+1)E)\equiv 3D$ \ \text{in} \ $Num(X)$.
\end{itemize}
Now we show that none of these can happen.

Let $D\sim \sigma^*L-aE$, for some $L \in \text{Pic}(S)$ and  $a \in \mathbb{Z}$. 
Suppose we are in case $(a)$. Then we have $H'L \leq (l+1)a + 2$ and $L^2=a^2$ and so we obtain the following inequalities:
\begin{equation}
\label{inequalitiess}
\phi(H')^2a^2 \leq H'^2a^2=H'^2L^2 \leq (H'\cdot L)^2 \leq ((l+1)a+2)^2,
\end{equation}
where the second inequality follows by Hodge index theorem. If $ |a| \geq 2$ we obtain 
$$
\phi(H') \leq |\frac{(l+1)a+2}{a}| \leq (l+1)+|\frac{2}{a}| \leq (l+1)+1,
$$
which contradicts the hypothesis. If $|a|=1$ from \ref{inequalitiess} we get 
$\phi(H)=\phi(H') \leq (l+1)+2$ which again is not possible. 
If $a=0$ we get $D=\sigma^*L$ with $L$ effective, not numerically trivial and such that $L^2=0$ and $H'L \leq 2$. 
This gives $\phi(H) \leq 2$ and we conclude.

Suppose now we are in case $(b)$. As before one have $L^2=a^2-1$, $H'L \leq a(l+1) +1$. Therefore we obtain
$$
\phi(H')^2(a^2-1) \leq H'^2(a^2-1)=H'^2L^2 \leq (H' \cdot L)^2 \leq  (a(l+1)+1)^2. 
$$
If $|a| \geq 2$ we find $\phi(H') < \sqrt{2}(l+2)$. If $a=1$ then $L$ is an effective divisor such that $L^2=0$ and $H'L \leq l + 2$. Moreover observe that $L$ is not numerically  trivial since otherwise $D \equiv -E$, which is not possibile because $D$ is an effective non trivial divisor. Therefore we obtain $\phi(H') \leq l+ 2$. $a=-1$ cannot happen if $l \geq 1$ because $H'$ is nef and $L$ is effective and $L \cdot H'=-l$. 
If  $a=0$ then $L^2=-1$. This is not possibile for Enriques surfaces.

Suppose now we are in case $(c)$. Then $H'L= a(l+1)$ and $L^2=a^2-2$. Then, as before,
$$
\phi(H')^2(a^2-2) \leq H'^2(a^2-2)=H'^2L^2 \leq (H' \cdot L)^2 \leq  a^2(l+1)^2.
$$
Observe that if  $|a| \geq 2$ this gives  $\phi(H') \leq \sqrt{2}(l+1)$ and hence we conclude. Observe that $|a|=1$ cannot happen because otherwise $L^2=-1$ and this, again, is not possible. 
If $a=0$ then $L$ is a effective divisor such that $L^2=-2$ and $H' L=0$. This cannot happen because $H' \cdot L=(H+K_S) \cdot L$, $H$ is ample and $L$ is effective.

Suppose we are now in case $(d)$. Then $H'^2=9+(l+1)^2$ which is not possible since $H'^2 \geq \phi(H)^2>9+(l+1)^2$ by hypothesis. 
\end{proof}

\begin{cor}\label{corfond}
With the same hypothesis of the previous result we have $\sigma^*H-lE + \sigma^*K_S=\sigma^*(H+K_S)-lE $ is very ample on $X$.
\end{cor}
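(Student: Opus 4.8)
The plan is to deduce the statement directly from Proposition \ref{prop very amp} by applying it to the line bundle $H' := H + K_S$ in place of $H$. The key point is that on an Enriques surface $S$ the canonical bundle $K_S$ is a non-trivial $2$-torsion class in $\Pic(S)$; in particular it is numerically trivial, $K_S \equiv 0$.

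First I would record the numerical consequences of this. Since $H' = H + K_S \equiv H$, the self-intersection is unchanged, $(H')^2 = H^2$, and the $\phi$-invariant, which by definition depends only on the numerical equivalence class, satisfies $\phi(H') = \phi(H)$. Moreover ampleness is a numerical property (e.g.\ by the Nakai--Moishezon criterion), so $H'$ is again an ample line bundle and $(S, H')$ is a polarized Enriques surface. Consequently, whichever of the hypotheses (i) or (ii) of Proposition \ref{prop very amp} holds for $(S, H)$ with the given integer $l$ holds verbatim for $(S, H')$ as well: conditions (i) and (ii) constrain only $S$ (very general, resp.\ unnodal) and the value of $\phi$, all of which are insensitive to replacing $H$ by the numerically equivalent $H'$.

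Then I would simply invoke Proposition \ref{prop very amp} for $(S, H')$, which yields that $\sigma^*H' - lE$ is very ample on $S'$. Since $\sigma^*$ is a homomorphism of Picard groups, $\sigma^*H' - lE = \sigma^*(H+K_S) - lE = \sigma^*H - lE + \sigma^*K_S$, which is precisely the asserted bundle. There is essentially no serious obstacle here beyond recording the numerical invariance of the hypotheses under the $2$-torsion twist by $K_S$; the only thing worth double-checking is that the proof of Proposition \ref{prop very amp} (which already introduces $H+K_S$ and applies Reider's theorem to $\sigma^*(H+K_S)-(l+1)E$) runs unchanged when its input is $H'$, in which case its auxiliary bundle becomes $\sigma^*(H + 2K_S) - (l+1)E = \sigma^*H - (l+1)E$, using $2K_S = 0$.
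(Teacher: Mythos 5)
Your proposal is correct and coincides with the paper's own proof, which likewise just applies Proposition \ref{prop very amp} with $H+K_S$ in place of $H$. The extra justification you give — that $K_S$ is numerically trivial, so $\phi$, the self-intersection, and ampleness are unchanged and the hypotheses carry over verbatim — is exactly the (implicit) content of that one-line argument.
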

\begin{proof}
Apply proposition \ref{prop very amp} with $H+K_S $ instead of  $H$.
\end{proof}
We observe that proposition \ref{prop very amp} has the following corollary.
\begin{cor}
   \label{cor very amp}Let $l \geq 2$ and let $(S,H)$ be a polarized Enriques surface. Suppose that either
\begin{itemize}

\item[(i)] $S$ is a very general Enriques surface 
and $\phi(H) \geq \sqrt{2}(l+2)$, or 
   
 \item[(ii)] $S$ is unnodal 
and $\phi(H) \geq2(l+1)$.  
\end{itemize}
Then there exists a curve $C$ in the linear system $|H|$ with an $l-$ordinary point. 
\end{cor}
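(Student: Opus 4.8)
The plan is to reduce the statement to the very ampleness result of Proposition \ref{prop very amp} by passing to the blow-up $\sigma\colon S'\to S$ at a point $p$, with exceptional divisor $E$. The observation driving the proof is that a curve $C\in|H|$ having an ordinary $l$-fold point (the ``$l$-ordinary point'' of the statement) at $p$ is the same datum as a smooth curve $\tilde C$ on $S'$ lying in $|\sigma^*H-lE|$ and meeting $E$ transversally in $l$ distinct points, these points being the $l$ distinct tangent directions at $p$. So it suffices to produce such a member of $|\sigma^*H-lE|$ and then push it down.

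First I would invoke Proposition \ref{prop very amp}: for $l\geq 2$ the numerical hypotheses (i) and (ii) of the present corollary coincide with those of that proposition, hence $\sigma^*H-lE$ is very ample on $S'$. Fix the associated embedding $S'\hookrightarrow\mathbb{P}^N$. Since $(\sigma^*H-lE)\cdot E=-l\,E^2=l$, the exceptional curve $E\cong\mathbb{P}^1$ is mapped to a curve of degree $l$; and because the restriction of a very ample bundle to a subvariety is again very ample, the restricted linear system on $E$ is a very ample subsystem of $|\OO_{\mathbb{P}^1}(l)|$, which in particular separates points.

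Next I would take $\tilde C\in|\sigma^*H-lE|$ general. By Bertini $\tilde C$ is smooth (and irreducible, the system being very ample with two-dimensional image), and, since the restricted system on $E$ separates points, the general member cuts out on $E$ a reduced divisor of degree $l$; that is, $\tilde C\cap E$ consists of $l$ distinct points. As the intersection number $\tilde C\cdot E=l$ equals the number of these points, each intersection is transverse and $\tilde C$ does not contain $E$ as a component.

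Finally I would push forward. From $\sigma_*E=0$ and $\sigma_*\sigma^*H=H$ one gets $C:=\sigma(\tilde C)\in|H|$, and comparing $\tilde C\sim\sigma^*C-(\operatorname{mult}_p C)\,E$ with $\tilde C\sim\sigma^*H-lE$ forces $\operatorname{mult}_p C=l$. The $l$ transverse intersection points of $\tilde C$ with $E$ correspond to $l$ distinct tangent directions of $C$ at $p$, while $\sigma$ is an isomorphism away from $E$, so $C$ is smooth outside $p$ and has an ordinary $l$-fold point there. The only delicate point — and the sole place where positivity genuinely enters — is ensuring that the $l$ tangent directions are \emph{distinct}; this is exactly the separation of points on $E$ granted by the very ampleness of $\sigma^*H-lE$, which is why Proposition \ref{prop very amp} is the essential input and the remainder of the argument is formal.
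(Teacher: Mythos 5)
Your argument is correct and is exactly the derivation the paper intends: the corollary is stated as an immediate consequence of Proposition \ref{prop very amp} (the paper gives no written proof), and your blow-up argument --- very ampleness of $\sigma^*H-lE$, Bertini for a general member $\tilde C$, separation of points on $E$ to get $l$ distinct transverse intersections, then pushing down to an ordinary $l$-fold point --- is the standard way to fill in that implication.
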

Now we conclude with the proof of theorem \ref{teononsurjfgh}.
\begin{proof}[Proof of theorem \ref{teononsurjfgh}]
Let $\sigma: S' \rightarrow S$ be the blow-up in  a point and  $E$ the exceptional divisor. By the universal property of normalization we can suppose $ C \in |\sigma^*H-dE|$ and $\alpha=\sigma^*K_{S_{|_{C}}}$. From proposition \ref{prop very amp}
it follows that $\mathcal{O}_C(C)=\omega_C-T_d+\alpha $ 
is very ample. Observe that $H^0(C,\mathcal{O}_{C}(C))=H^0(S',\mathcal{O}_{S'}(C))-1$. 
%
Applying theorem \ref{teononsurjfgh} we obtain that  $\Phi_{\omega_C,\omega_C - T_d + \alpha}$  is not surjective. 

Now we want to prove that also $\Phi_{\omega_C -T_d,\omega_C - T_d + \alpha}$ is not surjective using proposition \ref{propositionfundamentalsurjection} with $L=\omega_C$, $M=\omega_C - T_d + \alpha$ and $n=d$. Observe that since $\mathcal{O}_{S'}(C)+K_{S'}=(\sigma^*(H+K_S)-(d-1)E)$, $\omega_C=\mathcal{O}_{S'}(C+\omega_{S'})$ is very ample by by corollary \ref{corfond}. Analogously $\mathcal{O}_C(C+\sigma^*K_S)=\omega_C-T_d$ is very ample. It remains to show that $h^1(\omega_C)=h^1(\omega_C-T_d)$ or equivalently that $h^0(\omega_C-T_d)=h^0(\omega_C)-d$. Consider then the following commutative diagram:
\begin{center}
\begin{tikzcd}
& 0 \arrow[d] & 0 \arrow[d] & 0 \arrow[d]\\
    
0 \arrow[r] &  \mathcal{O}_{S'}(K_S'-E) \arrow[r,] \arrow[d,]
    & \mathcal{O}_{S'}(K_{S'}) \arrow[r,] \arrow[d,] & \mathcal{O}_{E}(K_{S'}) \arrow[r]\arrow[d] & 0 \\
    
0  \arrow[r]    &  \mathcal{O}_{S'}(C+K_{S'}-E)\arrow[d] \arrow[r,]
&  \mathcal{O}_{S'}(C+K_{S'})\arrow[d] \arrow[r] & \mathcal{O}_{E}(C+K_{S'})\arrow[d] \arrow[r] & 0 \\

0  \arrow[r]    &  \mathcal{O}_{C}(\omega_C-T_d)  \arrow[d] \arrow[r,]
&  \mathcal{O}_{C}(\omega_C) \arrow[d] \arrow[r] & \bigoplus_{i=1}^d\mathcal{O}_{p_{i}} \arrow[d] \arrow[r] & 0\\
& 0 & 0 & 0 & 
\end{tikzcd}
\end{center}
and the one induced at level of global section:
\begin{center}
\begin{tikzcd}

&  0  \arrow[d,]
    & 0  \arrow[d,] & 0 \arrow[d] & &  \\
    
0  \arrow[r]    &  H^0(\mathcal{O}_{S'}(C+K_{S'}-E))\arrow[d] \arrow[r,]
&  H^0(\mathcal{O}_{S'}(C+K_{S'}))\arrow[d] \arrow[r] & \mathbb{C}^d\arrow[d] \arrow[r] & 0  &  \\

0  \arrow[r]    &  H^0(\mathcal{O}_{C}(\omega_C-T_d))   \arrow[r,]
&  H^0(\mathcal{O}_{C}(\omega_C) \arrow[d] \arrow[r]) & \mathbb{C}^d \arrow[d]  &   & \\
&  & 0 & 0& &
\end{tikzcd}
\end{center}
where we are using that $H^0(\mathcal{O}_{S'}(K_{S'})) \simeq H^0(\mathcal{O}_{S}(K_{S}))=0$,  $E \simeq \mathbb{P}^1$ and $\mathcal{O}_{E}(K_{S'}) $ is a divisor of degree $-1$ in $E \simeq \mathbb{P}^1$, $h^1(\mathcal{O}_{S'}(C+K_{S'}-E))=0$ by Kawamata vanishing theorem since $\mathcal{O}_{S'}(C-E)=\mathcal{O}_{S'}(\sigma^*H-(d+1)E)$ is big and nef. $H^1(K_{S'})\simeq H^1(\mathcal{O}_{S'})=0$ because $S$ is an Enriques surface and $S'$ is a blow-up. Hence  we conclude that $h^0(\mathcal{O}_{C}(\omega_C-T_d))=h^0(\mathcal{O}_{C}(\omega_C)) -d.$ 
\end{proof}

\section{Surjectivity for special curves}
\label{section3}
We start this section with a proposition giving sufficient conditions for the surjectivity of mixed Gaussian maps for surfaces which are the product of two curves. The idea of computing the rank of gaussian maps on the product of two curves with gaussian maps on the two factors dates back to Wahl (\cite{wahl2}, Lemma $4.12$). See also  Colombo-Frediani (\cite{colfre}, Theorem $3.1$) for the second Wahl map. 

\begin{proposition} \label{propsurjmappasopra}
    Let $X=C_1 \times C_2$. Let  $D_i$, $i=1,2$  be  effective divisors on $C_i$. Let $p_i:X=C_1 \times C_2 \rightarrow C_i$, $i=1,2$ be the projections. Let $L_i$ and $M_i$ be line bundles on $C_i$, $i=1,2$, such that $\text{deg}(L_i),  \text{deg}(M_i) \geq  2g_i+2$  and $\text{deg}(L_i) +  \text{deg}(M_i) \geq 6g_i+3$, 
     for $i=1,2$. Let $L=p_1^*L_1 \otimes p_2^*L_2$ and $M=p_1^*M_1\otimes p_2^*M_2$. Then    $\Phi_{X,L,M}$ is surjective. 
\end{proposition}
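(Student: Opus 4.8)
The plan is to compute the Gaussian map on the product $X = C_1 \times C_2$ by relating it to Gaussian and multiplication maps on the two factors, following the philosophy of Wahl and Colombo--Frediani. The target space $H^0(X, \Omega^1_X \otimes L \otimes M)$ decomposes via $\Omega^1_X = p_1^*\Omega^1_{C_1} \oplus p_2^*\Omega^1_{C_2}$ into a direct sum of two summands, so I would aim to hit each summand separately. By the K\"unneth formula, each of $H^0(X,L)$, $H^0(X,M)$, and the target splits as tensor products of cohomology on the factors, and the Gaussian map $\Phi_{X,L,M}$ respects a natural bigrading coming from this decomposition. The key structural observation is that $\Phi_{X,L,M}$, restricted to suitable graded pieces, is built out of the one-variable Gaussian maps $\Phi_{C_i,L_i,M_i}$ on the factors tensored with multiplication maps $H^0(L_j)\otimes H^0(M_j)\to H^0(L_j\otimes M_j)$ on the other factor.

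Concretely, I would first recall the identification $\Phi_{X,L,M}\colon R(L,M)\to H^0(X,\Omega^1_X\otimes L\otimes M)$ given in Section~\ref{section1}, and use the K\"unneth decomposition to write elements of $R(L,M)$ as sums of decomposable tensors $(s_1\otimes s_2)\otimes(t_1\otimes t_2)$ with $s_i,t_i$ sections on $C_i$. The local formula $\Phi(s\otimes t)=s\,dt - t\,ds$ and the product rule then show that the $p_1^*\Omega^1_{C_1}$-component of the image only sees differentiation in the $C_1$-direction; that component is governed by $\Phi_{C_1,L_1,M_1}$ acting on the first factor while the second-factor sections multiply via the map $H^0(C_2,L_2)\otimes H^0(C_2,M_2)\to H^0(C_2,L_2\otimes M_2)$. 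Symmetrically for the $C_2$-direction. So the strategy reduces surjectivity of $\Phi_{X,L,M}$ onto each summand to: (a) surjectivity of each factor Gaussian map $\Phi_{C_i,L_i,M_i}$, and (b) surjectivity of the multiplication map on the complementary factor.

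For the input hypotheses, the numerical conditions $\deg L_i,\deg M_i\ge 2g_i+2$ guarantee that both line bundles are very ample (degree $\ge 2g_i+1$) and nonspecial, so the multiplication maps $H^0(C_j,L_j)\otimes H^0(C_j,M_j)\to H^0(C_j,L_j\otimes M_j)$ are surjective — this is a standard consequence of the base-point-free pencil trick or of Castelnuovo--Mumford regularity for line bundles of large degree on a curve. The condition $\deg L_i+\deg M_i\ge 6g_i+3$ is precisely the known numerical threshold (from work on Gaussian maps of line bundles on curves, e.g.\ in the style of Ciliberto--Harris--Miranda / Wahl) ensuring surjectivity of the one-variable Gaussian map $\Phi_{C_i,L_i,M_i}$. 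I would quote these two facts and feed them into the factorization from the previous paragraph to conclude that $\Phi_{X,L,M}$ surjects onto each of the two summands of the target.

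The main obstacle I anticipate is \emph{not} the factor-by-factor surjectivity, which is handled by the numerical hypotheses, but rather the bookkeeping that shows the image genuinely fills the whole direct sum $H^0(p_1^*\Omega^1_{C_1}\otimes L\otimes M)\oplus H^0(p_2^*\Omega^1_{C_2}\otimes L\otimes M)$ rather than only a ``diagonal'' or proper subspace of it. The subtlety is that an element of $R(L,M)$ contributing to the first summand may have an unwanted coupled contribution in the second summand (and conversely), so one must verify that the kernel $R(L,M)$ is large enough — using the K\"unneth/graded structure together with the surjectivity of the factor multiplication maps — to produce, independently, every class in each summand. Making this decoupling rigorous (for instance by exhibiting, for a class targeting the $C_1$-direction, tensors whose $C_2$-direction derivative contribution can be arranged to vanish or be absorbed) is where the careful argument lies; this is exactly the point that Wahl's Lemma~4.12 and the Colombo--Frediani computation address, and I would adapt their method to the present mixed ($L\ne M$) setting.
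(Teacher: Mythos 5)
Your proposal is correct and follows essentially the same route as the paper: decompose the target via $\Omega^1_X = p_1^*\Omega^1_{C_1}\oplus p_2^*\Omega^1_{C_2}$, use K\"unneth to factor the map on each summand as $\Phi_{C_1,L_1,M_1}\otimes\Phi^0_{C_2,L_2,M_2}$ and $\Phi^0_{C_1,L_1,M_1}\otimes\Phi_{C_2,L_2,M_2}$, and conclude from surjectivity of the factor Gaussian maps (Bertram--Ein--Lazarsfeld, which is exactly the source of the numerical hypotheses) and of the multiplication maps. The ``decoupling'' issue you flag is resolved in the paper precisely as you anticipate, by working with the decomposition $I_{\Delta_{X\times X}}=\phi_1^*I_{\Delta_{C_1\times C_1}}+\phi_2^*I_{\Delta_{C_2\times C_2}}$, so that sections coming from each ideal summand land only in the corresponding summand of $I_{\Delta}/I_{\Delta}^2\otimes q_1^*L\otimes q_2^*M$.
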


\begin{proof}
We want to relate the gaussian map $\Phi_{X,L,M}$ with gaussian maps on $C_i$, $i=1,2$. Let $q_i: X \times X \rightarrow X$, $i=1,2$ the two projections. Recall that $\Phi_{X,L,M}$ is given by:
$$
\Phi_{X,L,M}: H^0(X \times X, I_{\Delta_{X \times X}} \otimes q_1^*L \otimes  q_2^*M) \rightarrow H^0(X \times X, I_{\Delta_{X \times X}} /I_{\Delta_{X \times X}}^2   \otimes q_1^*L \otimes  q_2^*M) 
$$
Let $q_{i,1}:C_{1} \times C_{1} \ra C_1 \ \text{for}  \ i=1,2 
$
the projections and analougly
$
q_{i,2}:C_{2} \times C_{2} \ra C_2$. Let $(\phi_1,\phi_2)$ the isomorphism which exchange factors:
$$
X \times X= (C_1 \times C_2) \times  (C_1 \times C_2 ) \xrightarrow{(\phi_1,\phi_2)} (C_1 \times C_1) \times  (C_2 \times C_2 ),
$$
i.e. $\phi_i((x_1,x_2),(y_1,y_2)=(x_i,y_i)$.
Observe that  
$$
I_{\dg{X \times X}}= \phi_1^{*}I_{\dg{C_1 \times C_1}} + \phi_2^{*}I_{\dg{C_2 \times C_2}},
$$
where $\phi_i^{*}I_{\dg{C_i \times C_i}}$, $i=1,2$, are the inverse image ideal sheaves or equivalently the pullbacks sheaf (because projections are flat). Now consider the isomorphism of $\mathcal{O}_{X}$-modules:
$$
\Omega^1_{X} \simeq I_{\dg{X \times X}} \otimes_{X\times X} \mathcal{O}_{\dg{X \times X}}.
$$
Under this identification the decomposition
$$
\Omega^1_{X}\simeq p_1^*\Omega^1{_{C_1}} \oplus p_2^*\Omega^1{_{C_2}}
$$
can be read as 
$$
I_{\dg{X \times X}} \otimes_{X \times X} \mathcal{O}_{\dg{X \times X}}\simeq(\phi_1^{*}I_{\dg{C_1 \times C_1}} \oplus \phi_2^{*}I_{\dg{C_2 \times C_2}}) \otimes_{X \times X} \mathcal{O}_{\dg{X \times X}}.
$$
So we obtain  the following commutative diagram:
\begin{center}
\begin{tikzcd}
( \phi_1^{*}I_{\dg{C_1 \times C_1}} \oplus \phi_2^{*}I_{\dg{C_2 \times C_2}}) \otimes q_1^*L \otimes q_2^*M    \arrow[r, ""] \arrow[d, ""]
    & ( \phi_1^{*}I_{\dg{C_1 \times C_1}} \oplus \phi_2^{*}I_{\dg{C_2 \times C_2}}) \otimes q_1^*L \otimes q_2^*M \otimes \mathcal{O}_{\dg{X \times X} }\
    \arrow[d, "\simeq"] 
    \\
 I_{\dg{X \times X}} \otimes q_1^*L \otimes q_2^*M  \arrow[r, black, "" black] 
&I_{\Delta_{X \times X}} /I_{\Delta_{X \times X}}^2 \otimes q_1^*L \otimes q_2^*M   \end{tikzcd}
\end{center}
Taking global section we obtain
\begin{center}
\begin{tikzcd}
H^0(\phi_1^{*}I_{\dg{C_1 \times C_1}} \oplus \phi_2^{*}I_{\dg{C_2 \times C_2}} \otimes q_1^*L \otimes q_2^*M )   \arrow[r, "\psi"] \arrow[d, ""]
    & H^0(( \phi_1^{*}I_{\dg{C_1 \times C_1}} \oplus \phi_2^{*}I_{\dg{C_2 \times C_2}}) \otimes q_1^*L \otimes q_2^*M) \otimes \mathcal{O}_{\dg{X \times X}})\
    \arrow[d, " \ \simeq"] 
    \\
 H^0(I_{\dg{X \times X}} \otimes q_1^*L \otimes q_2^*M ) \arrow[r, black, "\Phi_{X,L,M}" black] 
&H^0(I_{\Delta_{X \times X}} /I_{\Delta_{X \times X}}^2 \otimes q_1^*L \otimes q_2^*M)   \end{tikzcd}
\end{center}

%
In order to show that $\Phi_{X,L,M}$ is surjective we will show the surjectivity of  $\psi$. Cleary $\psi$ is surjective if each of the direct sum map is surjective:
$$
\psi_1:H^0(\phi_1^{*}I_{\dg{C_1 \times C_1}} \otimes q_1^*L \otimes q_2^*M ) \rightarrow 
 H^0(( \phi_1^{*}I_{\dg{C_1 \times C_1}}  \otimes q_1^*L \otimes q_2^*M) \otimes \mathcal{O}_{\dg{X \times X}})
 $$
 and 
$$
\psi_2:H^0(\phi_2^{*}I_{\dg{C_2 \times C_2}} \otimes q_1^*L \otimes q_2^*M ) \rightarrow 
 H^0((  \phi_2^{*}I_{\dg{C_2 \times C_2}}) \otimes q_1^*L \otimes q_2^*M) \otimes \mathcal{O}_{\dg{X \times X}})
 $$
 Let us deal with the first map. The same reasoning will apply also to the second one. 
Observe that
$$
p_j \circ q_i=q_{i,j} \circ \phi_j.
$$
Then we can write
\begin{align}
\label{identifi}
    q_1^*L \otimes  q_2^*M&=q_1^*(p_1^*L_1 \otimes p_2^*L_2 ) \otimes q_2^*(p_1^*M_1 \otimes p_2^*M_2 )
    \\
    &=\phi_1^*(q_{1,1}^*L_1 \otimes q_{2,1}^*M_1)\otimes \phi_2^*(q_{1,2}^*L_2 \otimes q_{2,2}^*M_2).
\end{align}
And so we obtain 
$$
\phi_1^{*}I_{\dg{C_1 \times C_1}} \otimes q_1^*L \otimes q_2^*M 
    \simeq \phi_1^*(I_{\dg{C_1 \times C_1}} \otimes (q_{1,1}^*L_1 \otimes q_{2,1}^*M_1) ) \otimes \phi_2^*((q_{1,2}^*L_2 \otimes q_{2,2}^*M_2))
        $$
Using that $\mathcal{O}_{\dg{X \times X}} \simeq 
\phi_1^*\mathcal{O}_{\dg{C_1  \times C_1 }} \otimes \phi_2^*\mathcal{O}_{\dg{C_2  \times C_2 }} $ we also obtain 
\begin{align*}
\phi_1^{*}I_{\dg{C_1 \times C_1}}  \otimes q_1^*L \otimes q_2^*M  \otimes \mathcal{O}_{\dg{X \times X}} 
\end{align*}
\begin{align*}
        \simeq \phi_1^*(I_{\dg{C_1 \times C_1}} \otimes (q_{1,1}^*L_1 \otimes q_{2,1}^*M_1) \otimes \mathcal{O}_{\dg{C_1  \times C_1 }}  ) \otimes \phi_2^*((q_{1,2}^*L_2 \otimes q_{2,2}^*M_2) \otimes \mathcal{O}_{\dg{C_2  \times C_2 }})
  \end{align*}
So $\psi_1$ becomes a map:
\begin{center}
    
\begin{tikzcd}
    H^0( \phi_1^*(I_{\dg{C_1 \times C_1}} \otimes (q_{1,1}^*L_1 \otimes q_{2,1}^*M_1) ) \otimes \phi_2^*((q_{1,2}^*L_2 \otimes q_{2,2}^*M_2))) 
    \arrow[d]
    \\
        H^0(\phi_1^*(I_{\dg{C_1 \times C_1}} \otimes (q_{1,1}^*L_1 \otimes q_{2,1}^*M_1) \otimes \mathcal{O}_{\dg{C_1  \times C_1 }}  ) \otimes \phi_2^*((q_{1,2}^*L_2 \otimes q_{2,2}^*M_2) \otimes \mathcal{O}_{\dg{C_2  \times C_2 }}))
    \end{tikzcd}
\end{center}
Now using that $X \times X \xrightarrow[]{\simeq}(C_1 \times  C_1) \times (C_2 \times  C_2)$ and Künneth formula we get:
\begin{align*}
     &H^0(X \times X, \phi_1^*(I_{\dg{C_1 \times C_1}} \otimes (q_{1,1}^*L_1 \otimes q_{2,1}^*M_1) ) \otimes \phi_2^*((q_{1,2}^*L_2 \otimes q_{2,2}^*M_2))) 
     \\
     \simeq& H^0(C_1 \times C_1, I_{\dg{C_1 \times C_1}} \otimes (q_{1,1}^*L_1 \otimes q_{2,1}^*M_1) ) \otimes H^0(C_2 \times C_2, (q_{1,2}^*L_2 \otimes q_{2,2}^*M_2)),
\end{align*}
and
\begin{align*}
     &H^0(X \times X, \phi_1^*(I_{\dg{C_1 \times C_1}} \otimes (q_{1,1}^*L_1 \otimes q_{2,1}^*M_1) \otimes \mathcal{O}_{\dg{C_1\times C_1}}) \otimes \phi_2^*((q_{1,2}^*L_2 \otimes q_{2,2}^*M_2))\otimes \mathcal{O}_{\dg{C_2\times C_2}}) 
     \\
     \simeq& H^0(C_1 \times C_1, I_{\dg{C_1 \times C_1}} \otimes (q_{1,1}^*L_1 \otimes q_{2,1}^*M_1) \otimes \mathcal{O}_{\dg{C_1\times C_1}}) ) \otimes H^0(C_2 \times C_2, (q_{1,2}^*L_2 \otimes q_{2,2}^*M_2) \otimes \mathcal{O}_{\dg{C_2\times C_2}})).
\end{align*}
Under these identifications   $\psi_1$ becomes:
\begin{center}
\begin{tikzcd}
    H^0(I_{\dg{C_1 \times C_1}} \otimes q_{1,1}^*L_1 \otimes q_{2,1}^*M_1) \otimes H^0( q_{1,2}^*L_2 \otimes q_{2,2}^*M_2) 
    \arrow[d,"\psi_1"]
    \\
        H^0(I_{\dg{C_1 \times C_1}} \otimes q_{1,1}^*L_1 \otimes q_{2,1}^*M_1 \otimes \mathcal{O}_{\dg{C_1  \times C_1 }}  )\otimes H^0(q_{1,2}^*L_2 \otimes q_{2,2}^*M_2 \otimes \mathcal{O}_{\dg{C_2  \times C_2 }})
    \end{tikzcd}
\end{center}
and it is given by the tensor product $ \Phi_{C_1,L_1, M_1} \otimes \Phi^0_{C_2,L_2,M_2}$,  where  
$$
\Phi_{C_1,L_1, M_1}:H^0(I_{\dg{C_1 \times C_1}} \otimes q_{1,1}^*L_1 \otimes q_{2,1}^*M_1) \rightarrow  H^0(I_{\dg{C_1 \times C_1}} \otimes q_{1,1}^*L_1 \otimes q_{2,1}^*M_1 \otimes \mathcal{O}_{\dg{C_1  \times C_1 }}  )
$$
and
$$
\Phi^0_{C_2,L_2,M_2}:H^0( q_{1,2}^*L_2 \otimes q_{2,2}^*M_2)  \rightarrow H^0(q_{1,2}^*L_2 \otimes q_{2,2}^*M_2 \otimes \mathcal{O}_{\dg{C_2  \times C_2 }}).$$
Analogously one can show that $\psi_2=\Phi^0_{C_1,L_1, M_1} \otimes \Phi_{C_2L_2,M_2}.$ Therefore we obtain
\begin{equation}
  \psi=  
  \Phi_{C_1,L_1, M_1} \otimes \Phi^0_{C_2,L_2,M_2} \oplus \Phi^0_{C_1,L_1, M_1} \otimes \Phi_{C_2L_2,M_2}.
\end{equation}
 Now observe that if deg$(L_i)$, deg$(M_i) \geq 3g_i + 2$ for $i=1,2$, then by Theorem 1.1 of \cite{einlaz}, each gaussian map is surjective and so $\psi$ is.
\end{proof}
\begin{remark}
Let $X_1$ and $X_2$ be two smooth varieties of any dimension. Let $L_1$, $M_1$ and $L_2$, $M_2$ be two line bundles on $X_1$ and $X_2$ respectively. Denote by $L=L_1 \boxtimes L_2$ and $M=M_1 \boxtimes M_2$. We observe that a similar proof gives a lifting of $\Phi_{L,M}$ by $\Phi_{X_1,L_1,M_1} \otimes \Phi^0_{X_2,M_2,L_2} \oplus \Phi^0_{X_1,L_1,M_1} \otimes \Phi_{X_2,M_2,L_2}$. 
\end{remark}
We are now going to prove a surjectivity result for mixed gaussian maps on curves living in the product of two curves.

\begin{proposition}\label{princsurjaltra}
With the same hypothesis and notations of proposition \ref{propsurjmappasopra}, let  $C$ be  an effective smooth curve in the linear system $|p_1^*D_1 + p_2^*D_2|$. Denote by $l_i$ and $m_i$ the degree of $L_i$ and $M_i$ respectively.  Moreover suppose that 
\begin{enumerate}
\item $l_i, m_i \geq  2g_i+2$  and $l_i+ m_i \geq 6g_i+3$;
\item $l_i+m_i> 2g_i-2+d_i$ for $i=1,2$\label{punto1};

\item $d_2(l_1+m_1-(2g_1-2))+d_1(l_2+m_2-(2g_2-2)) -4d_1d_2 >0$. \label{punto2} 
\end{enumerate}
Then 
    $$
    \Phi_{C,L_{|_{C}} \otimes M_{|_{C}}}
    $$
    is surjective.

\end{proposition}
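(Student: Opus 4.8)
The plan is to deduce the surjectivity of the Gaussian map on $C$ from the surjectivity of the Gaussian map on the ambient surface $X=C_1\times C_2$, which hypothesis (1) guarantees via Proposition \ref{propsurjmappasopra}. The bridge between the two is restriction of sections from $X$ to $C$, together with the naturality of Gaussian maps: restricting $H^0(X,L)\to H^0(C,L|_C)$ and $H^0(X,M)\to H^0(C,M|_C)$ carries $R(L,M)$ into $R(L|_C,M|_C)$, and since the Gaussian map is computed by the local formula $s\,dt-t\,ds$, it commutes with the restriction of differentials $\Omega^1_X\to\omega_C$. Concretely I would first set up the commutative square
\begin{center}
\begin{tikzcd}
R(L,M) \arrow[r,"\Phi_{X,L,M}"] \arrow[d,"\mathrm{res}"'] & H^0(X,\Omega^1_X\otimes L\otimes M) \arrow[d,"r"] \\
R(L|_C,M|_C) \arrow[r,"\Phi_{C,L|_C,M|_C}"'] & H^0(C,\omega_C\otimes L|_C\otimes M|_C)
\end{tikzcd}
\end{center}
where $r$ is restriction to $C$ followed by the projection $\Omega^1_X|_C\to\omega_C$. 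Since $\Phi_{X,L,M}$ is surjective, the composite $r\circ\Phi_{X,L,M}=\Phi_{C,L|_C,M|_C}\circ\mathrm{res}$ is surjective as soon as $r$ is, and this forces $\Phi_{C,L|_C,M|_C}$ to be surjective. So the whole problem reduces to showing that $r$ is surjective.

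Next I would factor $r$ into its two constituent steps and treat each by a short exact sequence. The restriction $H^0(X,\Omega^1_X\otimes L\otimes M)\to H^0\bigl(C,(\Omega^1_X\otimes L\otimes M)|_C\bigr)$ is surjective provided
$$
H^1\bigl(X,\Omega^1_X\otimes L\otimes M\otimes\mathcal{O}_X(-C)\bigr)=0,
$$
which comes from tensoring $0\to\mathcal{O}_X(-C)\to\mathcal{O}_X\to\mathcal{O}_C\to0$ with $\Omega^1_X\otimes L\otimes M$. The second step, induced by the adjunction (conormal) sequence $0\to N^\vee_{C/X}\to\Omega^1_X|_C\to\omega_C\to0$ with $N^\vee_{C/X}=\mathcal{O}_X(-C)|_C$, is surjective on $H^0$ provided
$$
H^1\bigl(C,N^\vee_{C/X}\otimes(L\otimes M)|_C\bigr)=0.
$$
Both vanishings are then verified using the product structure.

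For the first, I would use the splitting $\Omega^1_X\cong p_1^*\omega_{C_1}\oplus p_2^*\omega_{C_2}$ together with $\mathcal{O}_X(-C)\cong p_1^*\mathcal{O}_{C_1}(-D_1)\otimes p_2^*\mathcal{O}_{C_2}(-D_2)$ and $L\otimes M\cong p_1^*(L_1M_1)\otimes p_2^*(L_2M_2)$. By the Künneth formula each summand's $H^1$ is a sum of products $H^i(C_1,\cdot)\otimes H^j(C_2,\cdot)$ with $i+j=1$, and Serre duality turns each factor into a condition of the form $\deg>2g_k-2$ or $\deg>0$; a short check shows all of these are implied by hypothesis (2). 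For the second vanishing I would apply adjunction $\omega_C=(\omega_X\otimes\mathcal{O}_X(C))|_C$ and Serre duality on $C$ to identify $H^1\bigl(C,N^\vee_{C/X}\otimes(L\otimes M)|_C\bigr)$ with the dual of $H^0\bigl(C,(\omega_X\otimes\mathcal{O}_X(2C)\otimes L^\vee\otimes M^\vee)|_C\bigr)$. A direct intersection computation on $X$, using $C\equiv d_1F_1+d_2F_2$ with $F_1^2=F_2^2=0$, $F_1\cdot F_2=1$, gives
$$
\deg_C\bigl((\omega_X\otimes\mathcal{O}_X(2C)\otimes L^\vee\otimes M^\vee)|_C\bigr)=-\bigl[d_2(l_1+m_1-(2g_1-2))+d_1(l_2+m_2-(2g_2-2))-4d_1d_2\bigr],
$$
which is negative precisely by hypothesis (3); hence the $H^0$ vanishes and so does the $H^1$.

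I expect the main obstacle to be bookkeeping rather than conceptual: the Künneth decomposition of the first cohomology group produces several factors, and one must confirm that each numerical condition produced by Serre duality is covered by (2), while the genuinely delicate point is the degree computation that makes hypothesis (3) exactly the vanishing needed for the conormal step. The commutativity of the square (naturality of the Gaussian map under restriction) is standard and I would only sketch it via the local description $s\,dt-t\,ds$. Once both vanishings are in place, $r$ is surjective and the surjectivity of $\Phi_{C,L|_C,M|_C}$ follows from the diagram chase in the first paragraph.
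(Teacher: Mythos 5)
Your proposal follows essentially the same route as the paper: factor the Gaussian map on $C$ through the surface Gaussian map $\Phi_{X,L,M}$ (surjective by Proposition \ref{propsurjmappasopra}), then prove surjectivity of the restriction map via $H^1(X,\Omega^1_X\otimes L\otimes M(-C))=0$ (Künneth plus Serre duality, hypothesis (2)) and of the conormal-sequence map via the degree condition in hypothesis (3). The commutative diagram, the two vanishing statements, and the final intersection-number computation all match the paper's argument.
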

 \begin{proof}
   Consider the following  commutative diagram \begin{equation}\label{diagramma}
		\begin{tikzcd}
			H^0(X\times X,\mathcal{I}_{\Delta_X}\otimes L\boxtimes M ) \arrow{dd}\arrow{r}{\Phi_{L, M }}&  H^0(X,   \Omega^1_X \otimes L \otimes M )\arrow{dr}{\pi_1}&\\
			&  & H^0(C,   \Omega^1_X \otimes L \otimes M\restr{C})\arrow{dl}{\pi_2}\\
			H^0(C\times C,\mathcal{I}_{\Delta_C} \otimes  L_{|_{C}} \boxtimes M_{|_{C}})\arrow{r}{\Phi_{L_{|_{C}},M_{|_{C}}} }&H^0(C,\omega_C \otimes L_{|_{C}} \otimes M_{|_{C}}).&
		\end{tikzcd}
	\end{equation}
 Observe that the vertical arrow and $\pi_1$ are restriction maps, whereas  $\pi_2$ comes from the conormal bundle sequence
	\begin{equation*}\label{symmetric conormal}
		0\ra \mathcal{O}_C(-C)\ra \Omega^1_{X_{\restr{C}}} \ra\omega_C\ra 0
	\end{equation*}
	tensored by $\mathcal{O}_C(L+M)$.
We prove that $\Phi_{L,M}, \pi_1, \text{and } \pi_2$ are surjective. From this we obtain the desired surjectivity result. 
The surjectivity of $\Phi_{L,M}$  is just \ref{propsurjmappasopra}. 
The surjectivity of $\pi_1$  will follow from the vanishing  of $H^1(X,\Omega_X \otimes L \otimes M(- C)) \simeq 
H^1(X,p_1^*\omega_{C_1} \otimes L \otimes M(- C)) \oplus H^1(X,p_1^*\omega_{C_2} \otimes L \otimes M(- C)).$ Consider the first piece. Observe that 
$$
H^1(X,p_1^*\omega_{C_1} \otimes L \otimes M(- C)) \simeq H^1(X,p_1^*(\omega_{C_1} \otimes L_1 \otimes M_1(-D_1)) \otimes p_2^*(L_2 \otimes M_2(- D_2)).
$$
By Künneth this is just
\begin{align*}
   H^0(C_1,\omega_{C_1} \otimes L_1 \otimes M_1(-D_1)) & \otimes H^1(C_2,L_2 \otimes M_2(- D_2)).
    \\
    & \oplus
    \\
  H^1(C_1,\omega_{C_1} \otimes L_1 \otimes M_1(-D_1)) & \otimes H^0(C_2,L_2 \otimes M_2(- D_2)).
 \end{align*}
Now observe that $h^1(C_2,L_2 \otimes M_2(- D_2))=0$ and $h^1(C_1,\omega_{C_1} \otimes L_1 \otimes M_1(-D_1))$ are zero by Serre duality and the hypothesis \ref{punto1}. Analogously $H^1(X,p_1^*\omega_{C_2} \otimes L \otimes M(- C))$  decomposes as 
\begin{align*}
   H^0(C_1, L_1 \otimes M_1(-D_1)) & \otimes H^1(C_2, \omega_{C_2}  \otimes L_2 \otimes M_2(- D_2)).
    \\
    & \oplus
    \\
  H^1(C_1,  L_1 \otimes M_1(-D_1)) & \otimes H^0(C_2,  \omega_{C_2} \otimes L_2 \otimes M_2(- D_2)).
\end{align*}
Again, $h^1(C_2, \omega_{C_2}  \otimes L_2 \otimes M_2(- D_2))$ and $h^1(C_1, L_1 \otimes M_1(-D_1))$ are zero by Serre duality and the hypothesis the hypothesis \ref{punto1}.  The surjectivity of $\pi_2$ will follow from the vanishing of $H^1(C,(L_{|_{C}}+L_{M|_{C}} -C_{|_{C}}).$ By Serre duality it will be enough to show that 
 $$
 \text{deg}(K_C+C_{|_{C}}-L_{|_{C}}-M_{|_{C}}) <0.
 $$
 This is just hypothesis \ref{punto2}. 
 \end{proof}

\begin{mainconstruction}
\label{curvsrufapic}
In this remark we consider a construction we will use in the following corollary. First observe that if $S$ is a smooth surface, $H$ is an ample divisor on $S$ and $C \in |H|$ is a smooth curve, then the restriction map
$$
\text{Pic}^0_S \rightarrow \text{Pic}^0_C 
$$
is injective by Lefshetz hyperlane theorem (see for example \cite{fujita}, theorem C).

Now let $C_1$ and $C_2$ be two curves. Let $X$ be the product $C_1 \times C_2$. 
Let $p_i:X \rightarrow C_i$, $i=1,2$ be the two projections and let $D_i$ be effective divisors of degree $d_i$ such that $|p_1^*D_1 + p_2^*D_2|$. is base point free. Let $C$ be a  smooth irreducible curve in the linear system $|p_1^*D_1 + p_2^*D_2|$.   Let $\alpha' \in \Pic^0(C_1)$ a non trivial $2$-torsion element( in particular $g(C_1) \geq 1$ ).  Then $\alpha_1:=p_1^*\alpha'$ is a non trivial  $2$-torsion element in Pic$(X)$ and $\alpha:=\alpha_{1_{|_{C}}}$ is a non trivial $2$-torsion element in $\Pic(C)$. 
Let supp$(D_1)=\{p_{1,1},...,p_{1,d_1}\}$ and denote by $T_{d_2}$ the divisor of the $d_2$  intersection points between the fiber $p_{1}^{-1}(p_{1,1})$ and $C$. 

\end{mainconstruction}

\begin{remark}
\label{remajdjdjs}
 We observe that a sufficient condition for $\mathcal{O}_X(p_1^*D_1 + p_2^*D_2)$ to be base-point free is that both $\mathcal{O}_{C_1}(D_1)$ and $\mathcal{O}_{C_2}(D_2)$ are. Observe that if $C$ is any curve of genus $g \geq 1$, a  general effective divisor $D$ of degree $d \geq g+1$ is basepoint-free. This follows from classical results but we recall it. 
 
Since every divisor of degree $2g$ is  base-point-free, we can restrict to the case $g\geq 2$  and $ g+1 \leq d \leq 2g-1$. Consider first the case $d=2g-1$. Let $D'$ be a general divisor of degree $2g-2$ and $p \in C$ be a point. Then, by Riemann-Roch, it immediately follows that $D'+p$ is a base-point free divisor of degree $2g-1$.  Now suppose $ g+1 \leq d \leq 2g-2$ and consider the  Brill Noether variety $W^{r}_{d}$  parametrizing (isomorphism classes of) line bundles of degree $d$ with  the dimension of space of global sections greater or equal then $r-1$. Since $d$ is greater then $g+1$, by Riemann-Roch,  $Pic^{d}(C)=W^{d-g}_{d}$. Hence we have to show that a general element of $W^{d-g}_{d}$, with $ g+1 \leq d \leq 2g-2$, is base-point-free. Line bundles with base points are given, inside $W^{d-g}_{d}$, by the image of the natural map
\begin{equation}
\label{equass1}
 W^{d-g}_{d-1} \times  W^{0}_{1} \rightarrow W^{d-g}_{d}. 
\end{equation}
Consider the isomorphism $W^{d-g}_{d-1} \simeq W^0_{2g-1-d}$ given by $L \rightarrow \omega_C \otimes L^{\vee}$. 
 Since $0 \leq 2g-1-d \leq  g$, the last one is birational to $\operatorname{Sym}^{2g-1-d}C$ and hence has dimension $2g-1-d$. Then the image of \ref{equass1} has dimension $2g-d$. On the other hand $W^{d-g}_{d}$ has dimension greater than or equal to $\rho(g,d-g,d)=g$. We conclude that if $d \geq g+1$ the image of \ref{equass1} is proper subvariety and hence that the general element is base-point-free.
\end{remark}

\begin{cor}\label{corsurjfirstgauss}
Using the construction  \ref{curvsrufapic}  
suppose that one of the following holds:
    \begin{enumerate}
    \item  $g(C_i) \geq 2$ \ i=1,2, $d_1 \geq 5$, $d_2 \geq 4$, $d_1 \geq g_1+5$, $d_2 \geq g_2+4 $;

    \item $g(C_1)=1$, $g(C_2) \geq 2$, $d_1 \geq 6$, $d_2 \geq 4$,  $d_2 \geq g_2+4$, $d_1 > \frac{d_2}{g_2-1}$;

    \item $g(C_1) \geq 3$, $g(C_2)=1$, $d_1 \geq 5$, $d_2 \geq 5$, $d_1 \geq g_1+5$; 

    \item $C_1=\mathbb{P}^1$, $g(C_2) \geq 2$, $d_1\geq 5$, $d_2 \geq 4$, $d_2 \geq g_2+4$, $d_1(g_2-1)>2d_2$;  

     \item $g(C_1) \geq 3$, $C_2= \mathbb{P}^1$, $d_1\geq 5$, $d_2 \geq 4$, $d_2(g_1-2)>d_1 \geq g_1+5$.  \label{ouno4}
    \end{enumerate}    
    Then 
    $$
    \Phi_{C,\omega_C - T_{d_2},\omega_C-T_{d_2} + \alpha, }
    $$
    and
    $$
    \Phi_{\omega_C,\omega_C-T_{d_2} + \alpha}
    $$
    are surjective. 
\end{cor}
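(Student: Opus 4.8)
The plan is to realize each of the two Gaussian maps as an instance of Proposition \ref{princsurjaltra}, applied to a carefully chosen pair of product line bundles on $X = C_1 \times C_2$, and then to verify that the numerical hypotheses of that proposition follow from the inequalities imposed in each of the cases (1)--(5).

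First I would rewrite the bundles appearing on $C$ as restrictions of bundles of the form $p_1^* A_1 \otimes p_2^* A_2$. Since $\mathcal{O}_X(C) = p_1^* \mathcal{O}_{C_1}(D_1) \otimes p_2^* \mathcal{O}_{C_2}(D_2)$ and $\omega_X = p_1^* \omega_{C_1} \otimes p_2^* \omega_{C_2}$, adjunction gives
\[
\omega_C = \big( p_1^*(\omega_{C_1}(D_1)) \otimes p_2^*(\omega_{C_2}(D_2)) \big)\restr{C}.
\]
The fiber $p_1^{-1}(p_{1,1})$ is the divisor $p_1^* \mathcal{O}_{C_1}(p_{1,1})$, so $\mathcal{O}_C(T_{d_2}) = (p_1^* \mathcal{O}_{C_1}(p_{1,1}))\restr{C}$, a bundle of degree $C \cdot p_1^{-1}(p_{1,1}) = d_2$, while $\alpha = (p_1^* \alpha')\restr{C}$ by construction \ref{curvsrufapic}. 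Hence $\omega_C$, $\omega_C - T_{d_2}$ and $\omega_C - T_{d_2} + \alpha$ are all restrictions of product line bundles.

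Next I would fix the data. For $\Phi_{C, \omega_C - T_{d_2}, \omega_C - T_{d_2} + \alpha}$ I take $L = p_1^*(\omega_{C_1}(D_1 - p_{1,1})) \otimes p_2^*(\omega_{C_2}(D_2))$ and $M = p_1^*(\omega_{C_1}(D_1 - p_{1,1}) \otimes \alpha') \otimes p_2^*(\omega_{C_2}(D_2))$, so that $L\restr{C} = \omega_C - T_{d_2}$ and $M\restr{C} = \omega_C - T_{d_2} + \alpha$; writing $g_i = g(C_i)$, the relevant degrees are $l_1 = m_1 = 2g_1 + d_1 - 3$ and $l_2 = m_2 = 2g_2 - 2 + d_2$. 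For $\Phi_{C, \omega_C, \omega_C - T_{d_2} + \alpha}$ I keep the same $M$ and replace $L$ by $p_1^*(\omega_{C_1}(D_1)) \otimes p_2^*(\omega_{C_2}(D_2))$, which only enlarges $l_1$ to $2g_1 + d_1 - 2$. In both cases $L\restr{C}$ and $M\restr{C}$ are exactly the pair in the statement, so by Proposition \ref{princsurjaltra} it suffices to check its hypotheses (1)--(3). As the two maps differ only in the degree of $L_1$, and each of (1)--(3) is preserved when $\deg L_1$ is increased, the first map is the binding case and I would verify (1)--(3) for it alone.

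Finally I would carry out the case analysis. With the degrees above, (1) amounts to $d_1 \geq 5$, $d_2 \geq 4$ together with $l_1 + m_1 = 4g_1 + 2d_1 - 6 \geq 6g_1 + 3$, i.e. $d_1 \geq g_1 + 5$, and $l_2 + m_2 = 4g_2 + 2d_2 - 4 \geq 6g_2 + 3$, i.e. $d_2 \geq g_2 + 4$; (2) is immediate; and expanding (3) it collapses to the single inequality $2d_2(g_1 - 2) + 2d_1(g_2 - 1) > 0$. These are precisely the bounds imposed in (1)--(5): for instance in case (1) the last inequality holds since $g_1 \geq 2$, $g_2 \geq 2$; in case (2) it reads $d_1(g_2 - 1) > d_2$; in case (5), where $g_2 = 0$, it becomes $d_2(g_1 - 2) > d_1$; and in case (4), where $g_1 = 0$, it becomes $d_1(g_2 - 1) > 2d_2$. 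The one point demanding genuine care is the degenerate factors: I would substitute $g_i \in \{0, 1\}$ and recheck the weakened bounds, and — crucially — note that when $C_1 = \mathbb{P}^1$ (case (4)) the $2$-torsion class $\alpha'$ cannot live on $C_1$, so it must be taken on the positive-genus factor $C_2$; since $\deg \alpha' = 0$ this only transfers the degree-zero twist from $M_1$ to $M_2$ and leaves every degree, and hence the whole verification, untouched. This bookkeeping over the five cases, rather than any new idea, is the only real obstacle.
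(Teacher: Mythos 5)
Your proposal is correct and follows essentially the same route as the paper: the same choice of factors $L_1=\omega_{C_1}(D_1-p_{1,1})$, $M_1=\omega_{C_1}(D_1-p_{1,1})\otimes\alpha'$, $L_2=M_2=\omega_{C_2}(D_2)$ (resp.\ $L_1'=\omega_{C_1}(D_1)$ for the second map), the same reduction to the smaller-degree case, and the same application of Proposition \ref{princsurjaltra}, whose hypotheses reduce to exactly the inequalities $d_1\geq 5$, $d_2\geq 4$, $d_1\geq g_1+5$, $d_2\geq g_2+4$ and $d_2(g_1-2)+d_1(g_2-1)>0$ that the paper records. Your observation that in case (4) the $2$-torsion class must be moved to the positive-genus factor is a worthwhile point of care that the paper's construction glosses over, and it does not affect the degree computations.
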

\begin{proof}
Set $L_1=\omega_{C_1}+D_1-p_{1,1}$, $L_2=\omega_{C_2}+D_2$, $M_1=\omega_{C_1}+D_1-p_{1,1}+\alpha'$,  $M_2=\omega_{C_2}+D_2$ and $L'_1=\omega_{C_1}+D_1$,  $L'_2=\omega_{C_2}+D_2$, $M'_1=\omega_{C_1}+D_1-p_{1,1} + \alpha'$ and $M'_2=\omega_{C_2}+D_2$. Denote by $l_i,m_i, \ i=1,2$ and  $l_i',m_i', \ i=1,2$ their degrees. To prove the surjectivity of the gaussian maps  we want to apply proposition \ref{princsurjaltra} with $L_i, M_i, \ i=1,2$ in the first case,  and $L_i', M_i', \ i=1,2$, in the second. Since $l_i' \geq l_i, \ i=1,2$, $m_i' \geq m_i, i=1,2$, it is enough to verify the hypothesis of proposition \ref{princsurjaltra} in the first situation. It is easy to see that the conditions become: $d_1 \geq 5$, $d_2 \geq 4$, $d_1 \geq g_1+5$, $d_2 \geq g_2+4 $ and $d_2(g_1-2)+d_1(g_2-1)>0$. Then we conclude as in the statement. 
\end{proof}
We end this section with a surjectivity result for the related multiplication maps.
\begin{proposition}
\label{surjmult} Using the construction \ref{curvsrufapic} suppose that $d_2 \geq 3$ and $d_1 \geq 4$,   $g_1 \geq 1$, or $g_1=1$ and $d_2 \geq 3$. Then 
\begin{equation}
\label{equazione123}
\Phi^0_{\omega_{C}-T_{d_2},\omega_{C}-T_{d_2}+\alpha} 
\end{equation}
and 
\begin{equation}
\label{equazione1234}
\Phi^0_{\omega_{C},\omega_{C}-T_{d_2}+\alpha} 
\end{equation}
are surjective. 
\end{proposition}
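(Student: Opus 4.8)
The statement concerns surjectivity of two multiplication maps
$$
\Phi^0_{\omega_C-T_{d_2},\,\omega_C-T_{d_2}+\alpha}\qquad\text{and}\qquad \Phi^0_{\omega_C,\,\omega_C-T_{d_2}+\alpha},
$$
for the curve $C\subset X=C_1\times C_2$ built in Main construction \ref{curvsrufapic}. The natural strategy, in the spirit of Proposition \ref{propsurjmappasopra} and Proposition \ref{princsurjaltra}, is to realize each multiplication map as the restriction to $C$ of a multiplication map on the product $X$, and to deduce surjectivity on $C$ from surjectivity on $X$ together with a vanishing that controls the restriction. Concretely, with the same choices $L_i,M_i$ (resp.\ $L_i',M_i'$) as in Corollary \ref{corsurjfirstgauss}, I would write $\omega_C-T_{d_2}=L_{|_C}$ and $\omega_C-T_{d_2}+\alpha=M_{|_C}$ (resp.\ the primed versions), where $L=p_1^*L_1\otimes p_2^*L_2$ and $M=p_1^*M_1\otimes p_2^*M_2$, so that the target multiplication map on $C$ is $\Phi^0_{L_{|_C},M_{|_C}}$.

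\textbf{Step 1: reduce to the product.} First I would set up the commutative square analogous to the top-left of diagram \eqref{diagramma}, comparing the multiplication map $\Phi^0_{L,M}$ on $X$ with $\Phi^0_{L_{|_C},M_{|_C}}$ on $C$ via the restriction maps $H^0(X,L)\otimes H^0(X,M)\to H^0(C,L_{|_C})\otimes H^0(C,M_{|_C})$ and $H^0(X,L\otimes M)\to H^0(C,(L\otimes M)_{|_C})$. The restriction on the source is surjective provided $H^1(X,L(-C))=H^1(X,M(-C))=0$, and the restriction on the target is surjective provided $H^1(X,L\otimes M(-C))=0$; each of these splits by Künneth into a sum of products of cohomologies on $C_1$ and $C_2$, and all the relevant $H^1$'s vanish by Serre duality once the degrees exceed $2g_i-2+d_i$, exactly as in the proof of Proposition \ref{princsurjaltra}. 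Under these vanishings a diagram chase shows that surjectivity of $\Phi^0_{L,M}$ forces surjectivity of $\Phi^0_{L_{|_C},M_{|_C}}$.

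\textbf{Step 2: surjectivity on the product.} It remains to prove $\Phi^0_{L,M}\colon H^0(X,L)\otimes H^0(X,M)\to H^0(X,L\otimes M)$ is surjective. By Künneth this map factors as the tensor product $\Phi^0_{C_1,L_1,M_1}\otimes\Phi^0_{C_2,L_2,M_2}$ of the two multiplication maps on the factors, so it suffices that each $\Phi^0_{C_i,L_i,M_i}$ be surjective. For a smooth curve $C_i$ the multiplication map $H^0(L_i)\otimes H^0(M_i)\to H^0(L_i\otimes M_i)$ is surjective as soon as $\deg L_i,\deg M_i\ge 2g_i+1$ (projective normality / the classical theorem that two line bundles of degree $\ge 2g+1$ multiply onto their tensor product; see \cite{einlaz}), which holds here by the degree hypotheses $d_2\ge 3$, $d_1\ge 4$ with $g_1\ge 1$.

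\textbf{Main obstacle.} The genuinely delicate point is checking the degree bounds in the boundary cases, especially $g_1=1$: there the line bundle $L_1=\omega_{C_1}+D_1-p_{1,1}$ has degree only $d_1-1$, so I must verify $d_1-1\ge 2g_1+1=3$, i.e.\ $d_1\ge 4$, and simultaneously confirm that the target-restriction vanishing $H^1(X,L\otimes M(-C))=0$ survives when $g_1=1$ (where $\omega_{C_1}$ is trivial and the Künneth summands degenerate). The hypothesis ``$g_1=1$ and $d_2\ge 3$'' is presumably tailored precisely so that the weaker genus-one surjectivity threshold still applies to the $C_1$ factor while the $C_2$ factor carries the surjectivity; I would treat this case separately, using that on an elliptic curve two line bundles of degrees $\ge 3$ (or a degree $\ge 3$ bundle and a degree $\ge 2$ one, in the relevant subcase) already have surjective multiplication map.
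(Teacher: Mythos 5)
Your proposal follows essentially the same route as the paper: the same commutative square comparing $\Phi^0_{L,M}$ on $X=C_1\times C_2$ with the multiplication map on $C$, the same K\"unneth factorization of $\Phi^0_{L,M}$ into $\Phi^0_{L_1,M_1}\otimes\Phi^0_{L_2,M_2}$ with each factor surjective by the classical degree $\ge 2g_i+1$ bound, and the same K\"unneth--Serre duality vanishing of $H^1(X,L\otimes M(-C))$ for the surjectivity of the target restriction. The only cosmetic differences are that you additionally require surjectivity of the source restriction (which the diagram chase does not actually need) and flag $g_1=1$ as a separate case (it is not: $\deg L_1=d_1-1\ge 3=2g_1+1$ already falls under the same bound).
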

\begin{proof}
Consider first $\Phi^0_{\omega_{C}-T_{d_2},\omega_{C}-T_{d_2}+\alpha}$ 
and denote it by 
$\Phi^0$. Set  $L=K_X+C-p_1^*(p_{1,1})$ and $M=K_X+C-p_1^*(p_{1,1}) + \alpha_1$. Then $L_{|_{C}}=\omega_{C}-T_{d_2}$, $M_{|{C}}=\omega_{C}-T_{d_2}+\alpha$. Consider the following  commutative diagram \begin{equation}
		\begin{tikzcd}
			H^0(X, L) \otimes H^0(X, M)  \arrow{d}\arrow{r}{\Phi^0_{L, M }}&  H^0(X  ,   L \otimes M )\arrow{d}{p}\\
			H^0(C, L_{|_{C}}) \otimes H^0(C, M_{|_{C}})  \arrow{r}{\Phi^0 }&H^0(C, L_{|_{C}} \otimes M_{|_{C}}).
		\end{tikzcd}
	\end{equation}
where $p$ is the restriction map. In order to prove the surjectivity result, again, it is sufficient to prove that $\Phi^0_{L,M}$ and $p$ are surjective. The multiplication map: 
$$
 H^0(X \times X, q_1^*L \otimes q_2^*M)   \xrightarrow{\Phi^0_{L,M}}
    H^0(X \times X, q_1^*L \otimes q_2^*M \otimes \mathcal{O}_{\dg{X \times X} })
    $$
    decomposes, using  the identifications in \ref{identifi} with $L_1=\omega_{C_1} + D_1 -p_{1,1} $ and $L_2=\omega_{C_2} + D_2$, $M_1=\omega_{C_1} + D_1 - p_{1,1} + \alpha_1$, $M_2=\omega_{C_2} + D_2$, and Künneth theorem as before, as the tensor product of  the multiplication maps on the curves $C_i: \ i=1,2$:
    $$
\Phi^0=\Phi^0_{L_1,M_1} \otimes\Phi^0_{L_2,M_2}.
    $$
    Since $l_i,m_i \geq 2g_i+1, \ i=1,2$, each of the multiplication maps is  surjective by a classical result of Mumford. The surjectivity of $p$  will follow from the vanishing  of $H^1(X,L \otimes M - C)$. By Künneth this is isomorphic to
\begin{align*}
    H^0(C_1,L_1   \otimes M_1 -D_1) &\otimes  H^1(C_2,L_2   \otimes M_2 -D_2)
    \\
    & \oplus
    \\
H^1(C_1,L_1   \otimes M_1 -D_1) &\otimes  H^0(C_2,L_2 \otimes M_2 - D_2).
 \end{align*}
Now observe that $h^1(C_2,L_2   \otimes M_2(-D_2))=h^1(C_1,L_1   \otimes M_1(-D_1))=0$.  This is a consequence of Serre duality and the fact that $l_i+m_i> 2g_i-2+d_i$.  
This ends the proof of the surjectivity of \ref{equazione123}. An identical proof, with $L_1=\omega_{C_1}+D_1$,  $L_2=\omega_{C_2}+D_2$, $M_1=\omega_{C_1}+D_1-p_{1,1} + \alpha'$ and $M_2=\omega_{C_2}+D_2$,  gives the surjectivity of \ref{equazione1234}. 
\end{proof}

\section{Some useful lemmas}
\label{section4}
In this section we prove some results we will need in the next one. 
%
%
Let $C$ be a curve. We will need an upper bound on the gonality of curves in the  surface $C \times \mathbb{P}^1$, where $C$ is a curve. The proof is very much inspired by  \cite{hr} (see Lemma 2.8 and Theorem 6.1).

Let $p_1:C \times \mathbb{P}^1 \rightarrow C$, $p_2:C \times \mathbb{P}^1 \rightarrow \mathbb{P}^1$ be the two projections. Let $C_0$ be the class of a fiber of $p_2$.  Recall that 
\begin{align*}
    Pic(C \times \mathbb{P}^1)= p_1^*(Pic(C)) \oplus \mathbb{Z}C_0,
\end{align*}
and that the Néron-Severi is generated by $C_0$ (class of a fiber of $p_2$) and the class of a fiber of $p_1$,  which we will call $f$. We are going to prove the following:
\begin{proposition}
\label{goncurvesprod}
    Let $X \in |p_1^*(D_1)+ d_2C_0|$ be a curve in $C \times \mathbb{P}^1$. Then 
    \begin{itemize}
    
    \item if $C$ is hyperelliptic, 
     \begin{equation*}
    \text{gon}(X) \geq \text{min}(d_1,2d_2).
    \end{equation*}
\item If $C$ is any curve, $g(X) >0$ and $d_2 \geq \frac{d_1}{4}+1+\frac{1}{d_1}$  

\begin{equation*}
    \text{gon}(X) \geq \text{min}(d_1,d_2\text{gon}(C_1)).
    \end{equation*}
    \end{itemize}
\end{proposition}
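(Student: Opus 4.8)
The plan is to bound the gonality of $X$ from below by relating any low-degree pencil on $X$ to the two projections $p_1|_X$ and $p_2|_X$. Suppose $\gr^1_e$ is a pencil computing the gonality of $X$, so $e=\operatorname{gon}(X)$ and there is a degree-$e$ map $\phi: X \to \mathbb{P}^1$. The key dichotomy is whether this map factors through one of the two projections or is essentially independent of them. Concretely, I would consider the product map $(p_2|_X, \phi): X \to \mathbb{P}^1 \times \mathbb{P}^1$ and analyze its image: either the image is a curve of bidegree $(0,*)$ or $(*,0)$ (meaning $\phi$ or $p_2|_X$ is constant on fibers, i.e. one factors through the other up to a base change), or the image is a genuine curve dominating both factors, in which case we can compare the pullbacks of the rulings.

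First I would compute the two ``obvious'' pencils on $X$ coming from the ambient projections and record their degrees. The restriction $p_2|_X: X \to \mathbb{P}^1$ has degree $X \cdot C_0 = (p_1^*D_1 + d_2 C_0)\cdot C_0 = \deg D_1 = d_1$, since $C_0^2=0$ and $p_1^*D_1 \cdot C_0 = d_1$. The restriction $p_1|_X: X \to C$ has degree $X \cdot f = d_2$, where $f$ is the fiber of $p_1$; composing with a $\operatorname{gon}(C)$-pencil on $C$ (respectively the hyperelliptic $\gr^1_2$) yields a pencil on $X$ of degree $d_2 \operatorname{gon}(C)$ (respectively $2d_2$). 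These give the upper comparison targets $\min(d_1, 2d_2)$ and $\min(d_1, d_2\operatorname{gon}(C))$ appearing in the statement, so the content is to show no pencil can beat both.

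The heart of the argument is then to show that any base-point-free $\gr^1_e$ with $e < d_1$ must factor through $p_1|_X$, forcing $e \geq d_2 \operatorname{gon}(C)$ (or $e \geq 2d_2$ in the hyperelliptic case). For this I would use the Hirschowitz--Ramanan style Castelnuovo--Severi / Harris--type argument indicated by the reference \cite{hr}: given the pencil $\phi$, form the incidence correspondence counting how often fibers of $\phi$ and fibers of $p_2|_X$ meet, and bound it using the intersection numbers on $C \times \mathbb{P}^1$ together with the numerical hypothesis $d_2 \geq \tfrac{d_1}{4}+1+\tfrac{1}{d_1}$, which should be exactly what prevents the pencil from being ``transverse'' to the ruling $C_0$ without exceeding degree $d_1$. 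Once $\phi$ is shown to factor as $X \to C \to \mathbb{P}^1$, the degree of $\phi$ is a multiple $d_2 \cdot(\text{degree of the induced map } C \to \mathbb{P}^1)$, and the latter is at least $\operatorname{gon}(C)$ (at least $2$ in the hyperelliptic case), giving the bound.

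I expect the main obstacle to be making the factorization step rigorous: ruling out pencils that are genuinely mixed, i.e. neither pulled back from $C$ nor from $\mathbb{P}^1$. This is where the precise numerical hypothesis $d_2 \geq \tfrac{d_1}{4}+1+\tfrac{1}{d_1}$ and the positivity of $g(X)$ must be invoked, presumably through a Castelnuovo--Severi inequality applied to the pair of maps $(p_1|_X, \phi)$ or through an intersection-theoretic count of the induced map to $\mathbb{P}^1\times \mathbb{P}^1$. Controlling the arithmetic genus of the image curve in $\mathbb{P}^1 \times \mathbb{P}^1$ and comparing it with $g(X)$ via Riemann--Hurwitz is the delicate point; the hyperelliptic case should be cleaner because $\operatorname{gon}(C)=2$ is forced and one only needs the simpler bound $\min(d_1, 2d_2)$.
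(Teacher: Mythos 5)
Your setup and the degree computations ($\deg(p_2|_X)=X\cdot C_0=d_1$, $\deg(p_1|_X)=X\cdot f=d_2$) are correct, and the overall dichotomy --- either the gonality pencil is pulled back from one of the two factors or it is ``mixed'' --- is the right shape. But the key step, ruling out mixed pencils, is exactly what you leave unproved, and the route you propose for it (a Castelnuovo--Severi / correspondence argument) does not deliver the statement. Applied to the pair $(p_1|_X,\phi)$ with $\deg\phi=e<d_1$, Castelnuovo--Severi (using $g(X)=(d_1-1)(d_2-1)+d_2\,g(C)$ from adjunction) only tells you that $(p_1|_X,\phi):X\to C\times\mathbb{P}^1$ fails to be birational onto its image, i.e.\ both maps factor through some intermediate curve $Y$ with $\deg(X\to Y)=m\geq 2$. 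It does not force $Y=C$, and from $e=m\cdot\deg(Y\to\mathbb{P}^1)$ you cannot conclude $e\geq d_2\operatorname{gon}(C)$ without a lower bound on $\operatorname{gon}(Y)$ for the degree-$d_2/m$ cover $Y\to C$ --- which is a statement of exactly the same kind as the one you are trying to prove. Moreover, this route makes no use of the hypothesis $d_2\geq \frac{d_1}{4}+1+\frac{1}{d_1}$, which should signal that it is not the intended mechanism.

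The paper (following Harui, which is what \cite{hr} actually does) uses Serrano's extension theorem instead: if $\phi:X\to\mathbb{P}^1$ has degree $k$ and $X^2=2d_1d_2>\frac{1}{2}(k+2)^2$ --- the weaker Serrano threshold applies because $K_{C\times\mathbb{P}^1}\equiv -2C_0+(2g(C)-2)f$ is numerically even --- then $\phi$ extends to a morphism $\psi:C\times\mathbb{P}^1\to\mathbb{P}^1$. A fiber $D$ of $\psi$ satisfies $D\equiv bf+aC_0$ with $a,b\geq 0$ and $D^2=2ab=0$, so $D$ is either a sum of fibers of $p_1$ (giving $k=X\cdot D=d_2b\geq d_2\operatorname{gon}(C)$, since restricting $\psi$ to a fiber of $p_2$ exhibits a degree-$b$ pencil on $C$) or a multiple of $C_0$ (giving $k\geq d_1$). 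If $\phi$ does not extend, Serrano forces $2d_1d_2\leq\frac{1}{2}(k+2)^2$, which under the assumption $k<d_1$ contradicts precisely $d_2\geq\frac{d_1}{4}+1+\frac{1}{d_1}$; in the hyperelliptic case one instead gets $(k+1)^2\leq 2d_1d_2\leq\frac{1}{2}(k+2)^2$, forcing $k=1$, with no numerical hypothesis on $d_2$ needed (so that case is indeed cleaner, but for a different reason than you suggest). The extension theorem is the missing ingredient; the factorization step cannot be obtained from Castelnuovo--Severi alone.
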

For the proof we will use the following theorem of Serrano (see \cite{serrano}):
\begin{teo}
Let X be a smooth curve on a smooth surface S. Let $\phi: X \rightarrow \mathbb{P}^1$ be a surjective morphism of degree d. Suppose that either
\begin{itemize}
\item[(a)] $X^2 >(d+1)^2$, or 
\item[(b)]$X^2 >\frac{1}{2}(d + 2)^2$ and $K_S$ is numerically even.
\end{itemize}
Then there exists a morphism $\psi:S \rightarrow \mathbb{P}^1$ such that $\psi_{|_{X}}=\phi$.
\end{teo}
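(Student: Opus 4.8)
The plan is to argue by contradiction, using Serrano's extension theorem as the main engine to convert a low-degree pencil on $X$ into a fibration of the ambient surface $S=C\times\mathbb{P}^1$, which can then be controlled numerically. First I would record the numerical data on $S$. Writing $X\equiv d_1 f + d_2 C_0$ with $f\cdot C_0=1$ and $f^2=C_0^2=0$, one gets $X^2=2d_1 d_2$ and $K_S\equiv (2g(C)-2)f-2C_0=2\bigl((g(C)-1)f-C_0\bigr)$, so $K_S$ is numerically even and Serrano's criterion $(b)$ is the one available. Note the two projections already furnish the matching upper bounds, since $p_2|_X$ has degree $X\cdot C_0=d_1$ and $p_1|_X$ has degree $X\cdot f=d_2$ (composed with a $g^1_{\operatorname{gon}(C)}$); so the real content is the lower bound.

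Suppose then $\operatorname{gon}(X)=d$ with $d<\min(d_1,d_2\,\operatorname{gon}(C))$ in the general case, resp. $d<\min(d_1,2d_2)$ in the hyperelliptic case, and fix $\phi:X\to\mathbb{P}^1$ of degree $d$. The decisive step is to verify the numerical hypothesis $X^2>\tfrac12(d+2)^2$ so as to extend $\phi$ to a morphism $\psi:S\to\mathbb{P}^1$ with $\psi|_X=\phi$. Since $d\le d_1-1$ gives $(d+2)^2\le (d_1+1)^2$, the assumption $d_2\ge \tfrac{d_1}{4}+1+\tfrac1{d_1}$ is precisely what yields $X^2=2d_1d_2\ge \tfrac12(d_1+2)^2>\tfrac12(d+2)^2$. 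In the hyperelliptic case I would instead exploit both strict inequalities $d<d_1$ and $d<2d_2$: integrality gives $d_1\ge d+1$ and $d_2\ge \tfrac{d+1}{2}$, whence $X^2=2d_1d_2\ge (d+1)^2>\tfrac12(d+2)^2$ for every $d\ge 2$ (and $d=1$ forces $X\cong\mathbb{P}^1$, which we set aside), so that no auxiliary condition on $d_2$ is needed there.

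Once $\psi$ is produced, I would study a general fiber $F$, which is effective with $F^2=0$. Writing $F\equiv af+bC_0$, positivity of $F$ against the nef classes $C_0$ and $f$ gives $a=F\cdot C_0\ge 0$ and $b=F\cdot f\ge 0$, while $F^2=2ab=0$ forces exactly one of $a,b$ to vanish. If $b=0$, then $f\cdot F=0$, so $\psi$ is constant on every fiber of $p_1$ and therefore factors by the rigidity lemma as $\psi=g\circ p_1$ with $g:C\to\mathbb{P}^1$ nonconstant of degree $a\ge \operatorname{gon}(C)$; then $d=\deg\phi=F\cdot X=a\,d_2\ge d_2\operatorname{gon}(C)$, contradicting the assumption (here $a=2$ recovers $2d_2$ in the hyperelliptic case). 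If instead $a=0$, then $C_0\cdot F=0$, so $\psi$ contracts every fiber of $p_2$ and factors as $\psi=h\circ p_2$ with $h:\mathbb{P}^1\to\mathbb{P}^1$ of degree $b\ge 1$, giving $d=F\cdot X=b\,d_1\ge d_1$, again a contradiction. Either way $\operatorname{gon}(X)\ge\min(d_1,d_2\operatorname{gon}(C))$ follows.

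I expect the main obstacle to be the arithmetic of the second paragraph: ensuring that Serrano's inequality $X^2>\tfrac12(d+2)^2$ holds uniformly across the entire range $d<d_1$, which is exactly the role of the otherwise opaque constant $\tfrac{d_1}{4}+1+\tfrac1{d_1}$, and seeing why the hyperelliptic hypotheses are self-contained. A secondary technical point is justifying cleanly that a morphism killing all fibers of one projection genuinely factors through that projection, so that the intersection computation $F\cdot X=a\,d_2$ legitimately becomes a statement about $\operatorname{gon}(C)$.
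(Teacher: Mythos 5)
Your proposal does not prove the statement at hand: it \emph{assumes} it. The statement is Serrano's extension theorem itself (quoted by the paper from \cite{serrano} precisely because its proof lies outside the paper), namely that the numerical conditions (a) or (b) force a morphism $\phi:X\to\mathbb{P}^1$ defined on the curve to extend to a morphism $\psi:S\to\mathbb{P}^1$ defined on the whole ambient surface. Your first sentence declares that you will use ``Serrano's extension theorem as the main engine,'' and indeed the decisive step of your second paragraph is to verify $X^2>\tfrac12(d+2)^2$ ``so as to extend $\phi$ to a morphism $\psi:S\to\mathbb{P}^1$'' --- that is, you invoke the very implication you were asked to establish. Nothing in the proposal addresses the actual content of the theorem, which is the construction of $\psi$ from $\phi$: a genuine proof runs through quite different machinery (extending the base-point-free pencil $\phi^*\mathcal{O}_{\mathbb{P}^1}(1)$ from $X$ to $S$ via an analysis of adjoint linear systems, vanishing results of Reider type, and the Hodge index theorem, as in \cite{serrano}), none of which appears here.

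What you have written is instead, almost verbatim, the paper's own application of Serrano's theorem: your fiber analysis $F\equiv af+bC_0$, $2ab=0$, with the two cases $b=0$ (giving $d=a\,d_2\ge d_2\operatorname{gon}(C)$) and $a=0$ (giving $d=b\,d_1\ge d_1$), is the paper's Lemma \ref{lemmino}, and your numerical verifications, including the role of the constant $\tfrac{d_1}{4}+1+\tfrac1{d_1}$ and the self-contained hyperelliptic case, reconstruct the proof of Proposition \ref{goncurvesprod}. That reconstruction is correct as far as it goes (your arithmetic $(d+1)^2>\tfrac12(d+2)^2$ for $d\ge 2$ checks out, and the factorization of $\psi$ through a projection whose fibers it contracts is legitimately handled by rigidity), but it is the downstream corollary, not the theorem. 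As a proof of the stated theorem the proposal is therefore circular, and the gap is total: the extension step itself is never argued.
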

Recall that a divisor $D$ is called numerically even if $D\cdot E$ is even for any other divisor $E$. In our case, being $K_{C \times \mathbb{P}^1}\equiv -2C_0 +(2g(C)-2)f$, we have that $K_{C \times \mathbb{P}^1}$ is numerically even. Before giving the proof of proposition \ref{goncurvesprod}  we will need the folllowing:
\begin{lemma}
\label{lemmino}
Let $X \in|p_1^*(D_1)+ d_2C_0|$ be a curve in  $C \times \mathbb{P}^1$. Let $\phi:X \rightarrow \mathbb{P}^1$  be a morphism such that there exists $\psi:S \rightarrow \mathbb{P}^1$ such thath $\psi_{|_{X}}=\phi$. Then $\text{deg}(\phi) \geq \text{min}(d_2\text{gon}(C),d_1)$.
\end{lemma}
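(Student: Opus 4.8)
The plan is to analyze the fibration $\psi: S = C \times \PP \to \PP$ through the numerical class of its fibers. Recall from the discussion preceding the lemma that $\Pic(S) = p_1^*\Pic(C) \oplus \ZZ C_0$ and that the Néron--Severi group is generated by the fiber class $f$ of $p_1$ and the class $C_0$ of a fiber of $p_2$, with intersection numbers $f^2 = C_0^2 = 0$ and $f \cdot C_0 = 1$. Since $D_1$ has degree $d_1$, we have $X \equiv d_1 f + d_2 C_0$ numerically, so the whole argument will reduce to an intersection computation once the fiber class of $\psi$ is pinned down.

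First I would record the numerical constraints on the class $F$ of a fiber of $\psi$. Being the preimage of a point, $F$ is effective and nef, and since the preimages of two distinct points of $\PP$ are disjoint and linearly equivalent, $F^2 = 0$. Writing $F \equiv \alpha f + \beta C_0$, the nefness of $f$ and $C_0$ gives $\beta = F \cdot f \geq 0$ and $\alpha = F \cdot C_0 \geq 0$, while $F^2 = 2\alpha\beta = 0$ forces $\alpha = 0$ or $\beta = 0$. The degree of $\phi = \psi\restr{X}$ is then the intersection number
$$
\deg(\phi) = X \cdot F = (d_1 f + d_2 C_0)\cdot(\alpha f + \beta C_0) = d_2\alpha + d_1\beta.
$$

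Next I would identify the two resulting cases geometrically. If $\beta = 0$, then $F \in |p_1^*A|$ for some divisor $A$ of degree $\alpha$ on $C$; since $H^0(S, p_1^*A) \simeq H^0(C,A)$ (projection formula, using $H^0(\PP,\OO)=\C$), the pencil defining $\psi$ descends to a base-point-free pencil on $C$, so $\psi = g \circ p_1$ for a non-constant morphism $g: C \to \PP$ of degree $\alpha$. Hence $\alpha \geq \operatorname{gon}(C)$ and $\deg(\phi) = d_2\alpha \geq d_2\operatorname{gon}(C)$. Symmetrically, if $\alpha = 0$ then $F \in |\beta C_0| = |p_2^*\OO_{\PP}(\beta)|$, whence $\psi = h \circ p_2$ for a degree-$\beta$ self-map $h$ of $\PP$; as $\psi$ is non-constant, $\beta \geq 1$ and $\deg(\phi) = d_1\beta \geq d_1$. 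Combining the two cases yields $\deg(\phi) \geq \min(d_2\operatorname{gon}(C),\, d_1)$.

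The only genuinely delicate point is the factorization of $\psi$ through one of the two projections; the rest is the short intersection-theoretic computation above. That factorization rests on the identification $H^0(S, p_i^*(-)) \simeq H^0$ on the relevant factor, which guarantees that a pencil all of whose members are pulled back from a factor is itself pulled back from a pencil on that factor, so that $\psi$ really comes from a morphism on $C$ (respectively on $\PP$) of the asserted degree. I would take care to note that $\psi$ is non-constant, so that $\alpha \geq \operatorname{gon}(C) \geq 1$ in the first case and $\beta \geq 1$ in the second, ensuring both lower bounds are attained by genuine coverings.
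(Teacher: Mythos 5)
Your proof is correct and follows essentially the same route as the paper's: write the fiber class of $\psi$ as $\alpha f+\beta C_0$, use effectivity, $F^2=0$ and nefness of $f,C_0$ to force $\alpha\beta=0$ with $\alpha,\beta\geq 0$, and compute $X\cdot F$ in each case. The only (cosmetic) difference is how you justify $\alpha\geq\operatorname{gon}(C)$ in the case $\beta=0$ — you factor $\psi$ through $p_1$ via $H^0(S,p_1^*A)\simeq H^0(C,A)$, while the paper restricts $\psi$ to a fiber of $p_2$ to produce the degree-$\alpha$ map $C\to\mathbb{P}^1$; both are valid.
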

\begin{proof}
    Let $D$ be a fiber of $\psi$. Then $ D \sim p_1^*B + aC_0 $, with $a\in \mathbb{Z}$ and $B$ a divisor in $C$ of degree $b$. Numerically: $D \equiv bf + aC_0 $.  From $f \cdot D \geq 0$, $C_0 \cdot D \geq 0$, and $D^2=0$ one finds $a \geq 0$, $b \geq 0$ and $2ab=0$. Then we have two cases:
    \begin{itemize}
\item[(i)] $a=0$. In this case $ D \sim p_1^*B$. Then $\text{deg}(\phi)=\text{deg}(\psi_{|_{X}})=X \cdot D=d_2b \geq d_2\text{gon}(C)$, where the latter inequality follows from the observation that the restriction of $\psi$ to a fiber of $p_2$ gives  a morphism $C \rightarrow \mathbb{P}^1$ of degree greater or equal than $C_0 \cdot D=b$. And so $b \geq \text{gon}(C)$.
\item[(ii)] $b=0$. In this situation $D \sim aC_0$ and then $\text{deg}(\phi)=\text{deg}(\psi_{|_{X}})=aC_0\cdot X=ad_1\geq d_1$.
    \end{itemize}
    \end{proof}
\begin{proof}[Proof  of proposition \ref{goncurvesprod}]
Let $X \in |p_1^*(D_1)+ d_2C_0|$ be a curve in $C \times \mathbb{P}^1$ as before. Denote by $k$ the gonality of $X$ and let $\phi: X \rightarrow \mathbb{P}^1$ a morphism of degree $k$. If $\phi$ is extendable we conclude using lemma \ref{lemmino}. Then, let $\phi$ be not extendable. By contradiction suppose  $k < \text{min}(d_1,d_2\text{gon}(C_1))$. 
 By Serrano's theorem we get $X^2=2d_1d_2 \leq \frac{1}{2}(k+2)^2 < \frac{1}{2}(d_1+2)^2$. That cannot happen if $d_2 \geq \frac{d_1}{4}+1+\frac{1}{d_1}$. Finally observe that from  $k < \text{min}(d_1,d_2\text{gon}(C_1))$,  we get $(k+1)^2 \leq d_1d_2\text{gon}(C_1)$ and so, if  $C$ is hyperlelliptic, we get $(k+1)^2 \leq 2d_1d_2=X^2\leq \frac{1}{2}(k+2)^2 \implies k=1$ and $X \simeq \mathbb{P}^1$. 
\end{proof}
We finish proving a lemma which gives a criterion for a line bundle of the type $\omega_C-T_m+\alpha$ to be base point free/  very ample. We will use it in proposition \ref{surjimpds4} and  theorem \ref{propsurjgeneral23}.

Since we want this lemma to hold for any effective divisor $T_m$ of degree $m$, we have to suppose $m \leq g-3$. This condition in fact guarantees that $h^0(C,\omega_c-T_m + \alpha)\geq 2$. 
\begin{lemma}\label{lemmacliffandalph}
Let $C$ be a curve, $T_m$ an effective divisor of degree $m \leq g-3$ and $\alpha$ a (non trivial) $2-$torsion element. 
\begin{itemize}
\item[(a)]Suppose $\omega_C-T_m+\alpha$ is not base point free. Then
\begin{itemize}
\item[(i)] $h^0(C,T_m+\alpha)=0$ and  there exist a point $p$ such that $\text{dim}(|2(T_m+p)|) \geq 1$; or
\item[(ii)] $h^0(C,T_m+\alpha)\geq 1$  and  there exist a point $p$ such that $\text{dim}(|T_m+ \alpha +p)|) \geq 1$. 
\end{itemize}
\item [(b)] Suppose $\omega_C-T_m+\alpha$ is not very ample. Then  
\begin{itemize}
\item[(i)]there exist points $p$ and $q$ such that $h^0(C,T_m+\alpha+p)=0$  and $\text{dim}(|2(T_m+p+q)|) \geq 1$; or 
\item[(ii)] there exist points $p$ and $q$ such that $h^0(C,T_m+\alpha+p)\geq 1$  and  $\text{dim}(|T_m+ \alpha +p+q)|) \geq 1$. 
\end{itemize}
\end{itemize}
\end{lemma}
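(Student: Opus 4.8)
The plan is to translate the geometric conditions ``$\omega_C - T_m + \alpha$ is not base point free/very ample'' into statements about $h^0$ of the adjoint bundles via Riemann–Roch and Serre duality, and then to case-split according to whether the twisted divisor $T_m + \alpha$ moves. The starting observation is that a line bundle $N$ of degree $\geq 1$ on a curve fails to be base point free at a point $p$ exactly when $h^0(N - p) = h^0(N)$, and fails to be very ample along $p + q$ exactly when $h^0(N - p - q) = h^0(N) - 1$ (with $p = q$ giving the failure-of-immersion case). I would apply this with $N = \omega_C - T_m + \alpha$.

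First I would handle part (a). Setting $N = \omega_C - T_m + \alpha$, the hypothesis $m \leq g - 3$ together with Riemann–Roch guarantees $h^0(N) \geq 2$, so the statement is not vacuous. The base-point condition at $p$ reads $h^0(\omega_C - T_m + \alpha - p) = h^0(\omega_C - T_m + \alpha)$. By Serre duality this is equivalent to $h^0(T_m + p - \alpha) = h^0(T_m - \alpha)$ after the appropriate Riemann–Roch bookkeeping; more precisely, I would dualize $N$ and $N - p$ to compare $h^0(T_m - \alpha)$ (note $-\alpha \equiv \alpha$ since $\alpha$ is $2$-torsion, so this is $h^0(T_m + \alpha)$) with $h^0(T_m + p + \alpha)$. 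The natural dichotomy is then whether $h^0(T_m + \alpha) = 0$ or $\geq 1$. In the first case (i), the failure forces $h^0(T_m + p + \alpha) \geq 1$, and I would argue that this, combined with $\alpha$ being $2$-torsion, produces the stated movement of the doubled divisor $|2(T_m + p)|$: tensoring a section of $T_m + p + \alpha$ with a nontrivial relation coming from $2\alpha \equiv 0$ yields an element of $|2(T_m + p)|$ of positive dimension. In the second case (ii), $h^0(T_m + \alpha) \geq 1$ directly, and the base-point failure promotes this to $\dim |T_m + \alpha + p| \geq 1$.

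For part (b), the approach is parallel but with two points $p, q$: very ampleness fails when $h^0(N - p - q) = h^0(N) - 1$. Dualizing as above converts this into a comparison between $h^0(T_m + \alpha + p)$ and $h^0(T_m + \alpha + p + q)$, and again I would split on whether $h^0(T_m + \alpha + p) = 0$ (case (i)) or $\geq 1$ (case (ii)), obtaining respectively the movement of $|2(T_m + p + q)|$ or of $|T_m + \alpha + p + q|$ by the same $2$-torsion twisting argument used in part (a).

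The step I expect to be the main obstacle is the passage in case (i) from the cohomological statement $h^0(T_m + \alpha + (\text{point(s)})) \geq 1$ to the geometric conclusion about the \emph{doubled} divisor $|2(T_m + \cdots)|$. This is exactly where the $2$-torsion hypothesis must be used essentially rather than formally: one cannot simply add sections, but must instead exploit that if $T_m + \alpha + p$ is effective then $2T_m + 2p + 2\alpha \equiv 2T_m + 2p = 2(T_m + p)$ is effective and moves, since $2\alpha \equiv 0$. Making the dimension count $\dim |2(T_m + p)| \geq 1$ rigorous — rather than merely establishing effectivity — will require care, and I would keep track of the Riemann–Roch and Serre-duality indices precisely throughout, since the bound $m \leq g - 3$ is used both to guarantee $h^0(N) \geq 2$ and to control the degrees of the dualized bundles.
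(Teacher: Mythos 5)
Your proposal follows essentially the same route as the paper: Riemann--Roch converts the base-point (resp.\ separation) failure into $h^0(T_m+\alpha+p)=h^0(T_m+\alpha)+1$ (resp.\ the analogous identity with $p+q$), one splits on whether $h^0(T_m+\alpha)$ (resp.\ $h^0(T_m+\alpha+p)$) vanishes, and in the vanishing case one exploits $2\alpha\equiv 0$ to pass to the doubled divisor. The one step you explicitly leave open --- upgrading effectivity of an element of $|2(T_m+p)|$ to $\dim|2(T_m+p)|\geq 1$ --- is closed in the paper by a short observation you should make explicit: since $h^0(T_m+\alpha+p)=1$ there is an effective $E\sim T_m+\alpha+p$, hence $2E\sim 2(T_m+p)$, so the linear system $|2(T_m+p)|$ contains the two effective divisors $2E$ and $2(T_m+p)$; these are distinct, because $2E=2(T_m+p)$ as divisors would force $E=T_m+p$ and hence $\alpha=0$, contradicting nontriviality of $\alpha$. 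Two distinct members give $h^0(2(T_m+p))\geq 2$, i.e.\ the required dimension bound; the identical argument with $p+q$ in place of $p$ handles part (b)(i). With that sentence added your argument coincides with the paper's proof.
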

\begin{proof}
\
\\
\begin{itemize}
\item[(a)] Suppose $\omega_C - T_m + \alpha$ has  a base point $p$. Then, by Riemann-Roch, $h^0(T_m+p+\alpha)=h^0(T_m+\alpha)+1$. If $h^0(T_m+\alpha) \geq 1$ we conclude. If $h^0(T_m+\alpha)=0$, then  $h^0(T_m+\alpha+p)=1$. Then there exists an effective divisor $E$ 
such that $E \sim T_m+p+\alpha$. This gives
$2E \sim 2(T_m+p)$. 
Now observe that $h^0(2(T_m+p) \geq 2$ since  otherwise $2E=2(T_m+p)$  and hence $E=T_m+p$ which gives $\alpha=0$. This cannot be the case  because $\alpha$ is not trivial  by hypothesis.
\item[(b)] 
Suppose there exists two points $p$ and $q$ such that $q$ is a base point of  $\omega_C - T_m + \alpha-p$. Then, by Riemann-Roch, $h^0(T_m+p+q+\alpha)=h^0(T_m+p + \alpha)+1$. If $h^0(T_m+p+\alpha) \geq 1$ we conclude. If $h^0(T_m+p+\alpha)=0$, then  $h^0(T_m+\alpha+p+q)=1$. Then there exists an effective divisor $E$ 
such that $E \sim T_m+p+q+\alpha$.  Then
$2E \sim 2(T_m+p+q)$. As before, it follows $h^0(2(T_m+p+q))\geq 2.$ 
\end{itemize}
\end{proof}
%
%
%
\section{Surjecitivty for general curves}
\label{section5}
Let $\widetilde{M}_{g,d}$ be the Deligne-Mumford stack of smooth curves of genus $g$ with $d$ unordered points. Let $R_g$ be the stack of Prym curves with genus $g$. We consider the stack of Prym curves of genus $g$ with $d$ unordered points:
\begin{equation}
\label{rfdfdgd}
R_{g,d}:=R_g \times_{M_g} \widetilde{M}_{g,d}.
\end{equation}
We want to show that for the general point in $R_{g,d}$, under some assumptions on $g,d$, the gaussian maps $ \Phi_{C,\omega_C - T_{d},\omega_C-T_{d} + \alpha, }
    $ and $ \Phi_{\omega_C,\omega_C-T_{d} + \alpha}$ are surjective.
%
More precisely let us introduce the following sets:
\begin{equation}
\label{equadhfd}
S:=\{(g_1,d_1,d_2):  g_1 \geq 3, d_1 \geq 5, d_2 \geq 4, d_2(g_1-2)>d_1 \geq g_1+5, d_1 >d_2  \}.
\end{equation}
Fix $(g_1,d_1,d_2) \in S$ and set $g=(g_1-2)d_2+d_1(d_2-1)+1$. We are going to prove that for all  $ 0 \leq d \leq d_2$, if $(C,\alpha,T_d) \in R_{g,d}$ is a general element  the gaussian maps $ \Phi_{C,\omega_C - T_{d},\omega_C-T_{d} + \alpha, }
    $ and $ \Phi_{\omega_C,\omega_C-T_{d} + \alpha}$ are surjective.
\begin{notations}
\label{refdls}
In the following we will denote by $(C^*,\alpha^*, T_{d_{2}}^*)$ a point in $R_{g,d_2}$ constructed as in construction \ref{curvsrufapic} with $D_1$ general and taking $C_2=\mathbb{P}^1$, $C_1$ hyperelliptic, $(g_1,d_1,d_2)$ belonging to $S$ and $g=(g_1-2)d_2+d_1(d_2-1)+1$. We observe that the conditions   $g_1 \geq 3, d_2 \geq 4, d_2(g_1-2)>d_1 \geq g_1+5$ guarantee  that $C^*$ does exist  - by remark \ref{remajdjdjs} - and  the surjectivity of the aforementioned gaussian maps for the special point (see corollary \ref{corsurjfirstgauss}).  We  require  $C_1$ to be  hyperelliptic and $d_1 >d_2$ because in the proof  proposition \ref{surjimpds4}  we will need  $h^0(C^*,T_{d_2}^*)=1$ (we will use  proposition \ref{goncurvesprod}). 
%
\end{notations}
%
 \begin{proposition}
 \label{surjimpds4}
Let $(g_1,d_1,d_2)$ be in $S$ (\ref{equadhfd}), and $g=(g_1-2)d_2+d_1(d_2-1)+1$. Then the Gaussian maps
$$
\Phi_{\omega_{C},\omega_{C}-T_{d_2}+\alpha}:R(\omega_C,\omega_C-T_{d_2} + \alpha) \rightarrow H^0(C,3 \omega_C-T_{d_2}+ \alpha)
$$
and
$$
\Phi_{\omega_C-T_{d_2},\omega_C-T_{d_2}+\alpha}:R(\omega_C-T_{d_2},\omega_C-T_{d_2} + \alpha)  \rightarrow H^0(C,3 \omega_C-2T_{d_2}+ \alpha)
$$
are surjective for the general curve marked prym curve in $R_{g,d_2}$ .
 \end{proposition}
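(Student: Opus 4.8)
The plan is to deduce the statement from the special case established earlier by a semicontinuity argument, since surjectivity of a Gaussian map is an open condition on $R_{g,d_2}$. Thus it suffices to produce one marked Prym curve at which both maps are surjective, and to check that the ambient family is well behaved enough for surjectivity to propagate to the general member. The natural candidate is the special point $(C^*,\alpha^*,T^*_{d_2})$ of Notations \ref{refdls}, obtained from Construction \ref{curvsrufapic} with $C_1$ hyperelliptic of genus $g_1$, $C_2=\mathbb{P}^1$, $D_1$ general and $C^*\in|p_1^*D_1+d_2C_0|$. First I would verify by adjunction on $C_1\times\mathbb{P}^1$ that $C^*$ has genus exactly $g=(g_1-2)d_2+d_1(d_2-1)+1$, and invoke Remark \ref{remajdjdjs} (using $(g_1,d_1,d_2)\in S$) to guarantee that a smooth such $C^*$ exists; Proposition \ref{goncurvesprod}, together with $C_1$ hyperelliptic and $d_1>d_2$, then ensures $h^0(C^*,T^*_{d_2})=1$.

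At this special point the two Gaussian maps are surjective by Corollary \ref{corsurjfirstgauss}(5), whose hypotheses $g_1\geq3$, $d_1\geq5$, $d_2\geq4$ and $d_2(g_1-2)>d_1\geq g_1+5$ are precisely the constraints defining $S$ in \eqref{equadhfd}. Simultaneously, Proposition \ref{surjmult} gives the surjectivity at $(C^*,\alpha^*,T^*_{d_2})$ of the two multiplication maps $\Phi^0_{\omega_C,\omega_C-T_{d_2}+\alpha}$ and $\Phi^0_{\omega_C-T_{d_2},\omega_C-T_{d_2}+\alpha}$. This second input is what will let me control the source of each Gaussian map in the family.

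For the propagation I would work over a base $B$ parametrizing a family $\pi:\mathcal{C}\to B$ of marked Prym curves through $[C^*]$, with the associated relative line bundles, and form the relative Gaussian maps. The targets, e.g. $H^0(C,3\omega_C-T_{d_2}+\alpha)$, have degree larger than $2g-2$, hence $h^1=0$; by cohomology and base change the corresponding $R^0\pi_*$ are locally free of constant rank. For the source $R(L,M)=\ker\Phi^0_{L,M}$ I would use that $\Phi^0$ is surjective at $[C^*]$: by lower semicontinuity of the rank it stays surjective over a neighbourhood $U$ of $[C^*]$, so there $\dim R(L,M)=h^0(L)h^0(M)-h^0(L\otimes M)$, and each $h^0$ is constant on $U$ once the relevant $h^1$'s vanish (which again hold at $[C^*]$ and are open conditions). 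Over $U$ the two Gaussian maps are then morphisms of vector bundles of locally constant rank; being surjective at $[C^*]$, they remain surjective on a possibly smaller open neighbourhood. Since $R_{g,d_2}=R_g\times_{M_g}\widetilde{M}_{g,d_2}$ is irreducible, this open locus is dense and the conclusion follows for the general marked Prym curve.

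The delicate point is not the formal openness but the base-change behaviour of the source sheaf: to identify the fibre of the relative construction at $[C^*]$ with the genuine space $R(L,M)$ on $C^*$ one needs the multiplication map to be surjective there, which is exactly why Proposition \ref{surjmult} is proved beforehand. The other thing requiring care is that the special marking must deform to a \emph{general} configuration of $d_2$ points: this is guaranteed by $h^0(C^*,T^*_{d_2})=1$, which forces the $d_2$ marked points to move freely, so that the family $B$ dominates $R_{g,d_2}$ near $[C^*]$ and the surjective locus genuinely meets the general fibre.
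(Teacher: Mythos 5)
Your overall strategy --- degenerate to the special point $(C^*,\alpha^*,T^*_{d_2})$ of Notations \ref{refdls}, get surjectivity there from Corollary \ref{corsurjfirstgauss} and Proposition \ref{surjmult}, and propagate by semicontinuity after checking that source and target of the Gaussian maps have locally constant dimension --- is exactly the paper's. But there is a genuine flaw in the step where you control $\dim R(L,M)$: you claim each $h^0$ is constant near $[C^*]$ ``once the relevant $h^1$'s vanish''. For $L=\omega_C$ and $L=\omega_C-T_{d_2}$ these $h^1$'s do \emph{not} vanish: by Serre duality $h^1(\omega_C-T_{d_2})=h^0(T_{d_2})\geq 1$ because $T_{d_2}$ is effective, and $h^1(\omega_C)=1$. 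What is actually needed, and what the paper proves, is that these $h^1$'s are \emph{locally constant}: $h^0(T_{d_2})=1$ both at the special point --- this is where Proposition \ref{goncurvesprod}, the hyperellipticity of $C_1$ and the hypothesis $d_1>d_2$ enter, via $\operatorname{gon}(C^*)\geq\min(d_1,2d_2)>d_2$ --- and at the general point, where $T_{d_2}$ consists of $d_2\leq g$ general points. You do invoke $h^0(C^*,T^*_{d_2})=1$, but you assign it the spurious role of letting the marked points ``move freely'' so that the family dominates $R_{g,d_2}$ (which is automatic, since the marking is arbitrary); its actual role is to pin down $h^0(\omega_{C^*}-T^*_{d_2})=g-d_2$, matching the value at the general member, so that the source of the Gaussian map has locally constant dimension.

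The second gap is the unproven assertion that the relevant vanishing ``holds at $[C^*]$'' for $M=\omega_C-T_{d_2}+\alpha$. Here one needs $h^1(\omega_{C^*}-T^*_{d_2}+\alpha^*)=h^0(T^*_{d_2}+\alpha^*)=0$, and since $h^0$ can only jump up under specialization, knowing the vanishing at the general point of $R_{g,d_2}$ says nothing about the special point. The paper establishes it by restricting $\mathcal{O}_X(p_1^*p_{1,1}+p_1^*\alpha')$ to $C^*$, computing $H^1(X,\mathcal{O}_X(p_1^*p_{1,1}+p_1^*\alpha')(-C^*))=0$ via K\"unneth, and using the generality of $p_{1,1}\in\operatorname{supp}(D_1)$ to get $h^0(C_1,p_{1,1}+\alpha')=0$. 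Without this computation the local constancy of $h^0(\omega_C-T_{d_2}+\alpha)$, hence of $\dim R(L,M)$, is not established and the rank-comparison/semicontinuity step cannot be run.
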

 \begin{proof}
We will  prove the result for $\Phi_{\omega_C-T_{d_2},\omega_C-T_{d_2}+\alpha}$. An identical proof gives the surjectivity of $\Phi_{\omega_C,\omega_C-T_{d_2}+\alpha}$. 
For the rest of the proof we denote $\Phi^i_{\omega_C-T_{d_2},\omega_C-T_{d_2}+\alpha}$, $i=0,1$ just by $\Phi^i$,  $i=0,1$. Let $X$ be the product $C_1 \times \mathbb{P}^1$  with  $g(C_1)=g_1 \geq 3$ and $C_1$ hyperelliptic. Let $(C^*,\alpha^*,T_{d_2}^*)$ be a marked Prym curve constructed as in \ref{refdls}.

First, we show that $h^0(C,\omega_C-T_{d_2})$,  $h^0(C,\omega_C-T_{d_2}+\alpha)$ and $h^0(C,2\omega_C-2T_{d_2}+\alpha)$ are locally constant in an neighborhood of $(C^*,\alpha^*,T_{d_2}^*)$.
For the latter, immediately follows from Riemann-Roch. So let's  focus on the other two. By Riemann-Roch it is equivalent to show that $h^0(C,T_{d_2}+\alpha)$ and $h^0(C,T_{d_2})$ are locally constant in a neighborhood of $(C^*,T_{d_2}^*,\alpha^*)$.  For the special point we have $h^0(C^*,T_{d_2}^*)=1$ since $d_1 >d_2$ by construction and gon$(C^*)>\text{min}(d_1, 2d_2)>d_2$ by proposition \ref{goncurvesprod}. Next we show that $h^0(C^*,T_{d_2}^*+\alpha^*)=0$. Consider:
$$
0 \rightarrow \mathcal{O}_X( p_1^*p_{1,1}+ p_1^*\alpha'(-C^*))\rightarrow \mathcal{O}_X(p_1^*p_{1,1}+ p_1^*\alpha') \rightarrow \mathcal{O}_{C^*}(p_1^*p_{1,1}+ p_1^*\alpha') \rightarrow 0.
$$
 By Künneth formula we have that
$H^1(X,\mathcal{O}_X( p_1^*p_{1,1}+ p_1^*\alpha'(-C^*))\simeq$
 \begin{align*}
H^0(C_1,\mathcal{O}_{C_1}(p_{1,1}+\alpha'(-D_1)) &\otimes H^1(C_2,\mathcal{O}_{C_2}(-D_2))
\
\\
&\oplus 
\
\\
H^1(C_1,\mathcal{O}_{C_1}(p_{1,1}+\alpha'(-D_1)) &\otimes H^0(C_2,\mathcal{O}_{C_2}(-D_2)).
 \end{align*}
and notice that $h^0$ terms are zero by the hypothesis on the degree of $D_i$.  Now observe that choosing $p_{1,1} \in \text{supp}(D_1)$ general in the construction, we can assume $h^0(X,\mathcal{O}_X(p_1^*p_{1,1}+ p_1^*\alpha'))=h^0(C_1,p_{1,1}+ \alpha')=0$. Therefore $h^0(C^*,T^*_{d_2}+\alpha)=h^0(C^*,\mathcal{O}_{C^*}(p_1^*p_{1,1}+ p_1^*\alpha'))=0$. 

Now observe that since $g=(g_1-2)d_2+d_1(d_2-1)+1\geq d_2$, if $(C,T_{d_2},\alpha)$ is a general point $R_{g,d_2}$, (in particular we can assume $T_{d_2}$ to consist of $d_2$ general points) $h^0(C,T_{d_2})=1$.  Analogously, since $g-1 \geq d_2, h^0(C,T_{d_2}+\alpha)=0$. Hence we are done: the dimensions of the spaces of global sections of the line bundles we are considering are locally constant in a neighborhood of the special point.  
By proposition \ref{surjmult}, $\Phi^0$ is surjective for the special point $(C^*,\alpha^*, T_{d_2}^*)$. Then $R(\omega_C-T_{d_2},\omega_C-T_{d_2} + \alpha)$ has constant rank in a neighborood of the special point. By Riemann-Roch, also  $h^0(C,3\omega_C-2T_{d_2}+\alpha)$ is locally constant. Since, by corollary \ref{corsurjfirstgauss} $\Phi$ is surjective for the special point, by semi-coninuity it is surjective in a neighborhood.
%
 For the Gaussian map $\Phi_{\omega_C-T_{d_2},\omega_C-T_{d_2}+\alpha}: R(\omega_C ,\omega_C-T_{d_2}+\alpha) \rightarrow H^0(C,3 \omega_C-2T_{d_2}+ \alpha)$ the proof is very similar.
\end{proof}
We observe that the previous result  requires $d_2 \geq 4$ and hence we don't still have a surjectivity result for a general Prym curve with $2 $ or $3$ marked points.  We overcome this problem in the final theorem (theorem \ref{propsurjgeneral23}).
    are  surjective.
\begin{proof}[Proof of theorem \ref{propsurjgeneral23}]
Let's deal with the first map. Let $(C, \alpha, T_{d})$ and $(C, \alpha, T_{d_2})$  be general  elements in $R_{g,d}$ and $R_{g,d_2}$ respectively, such that $T_d \subseteq T_{d_2}$. In particular we can suppose that the supports of $T_{d}$ and $T_{d_2}$ consists of distinct points and that gon$(C)=[\frac{g+3}{2}]$. An easy calculation shows that $[\frac{g+3}{2}] > 2(d_2+2)$, and so we have that $\omega_C-T_{d_2} +\alpha$ is very ample.  In fact, if $\omega_C-T_{d_2}+\alpha$ is not very ample, by lemma \ref{lemmacliffandalph} there exists a $g^1_{2(d_2+2)}$.  Observe that also $\omega_C-T_{d_2}$ and $\omega_C-T_{d}$ are very ample since otherwise the curve would admit a $g^1_{d_2+2}$ and a $g^1_{d+2}$ respectively. Moreover, observe that $h^1(\omega_C-T_{d})=h^1(\omega_C-T_{d_2})$ since $h^0(T_{d})=h^0(T_{d_2})=1$. Denote by $T_n$ the $n-$ distinct points such that $T_{d_2}=T_d+T_n$. Then we can apply proposition \ref{propositionfundamentalsurjection} with $L=\omega_C-T_{d}$, $n=d_2-d$  (then $L-T_n=\omega_C-T_{d_2}$) and $M=\omega_C-T_{d_2} + \alpha $ and obtain a surjective map 
$$
 \operatorname{coker}(\Phi_{\omega_C-T_{d_2},\omega_C-T_{d_2} + \alpha }) \rightarrow \operatorname{coker}(\Phi_{\omega_C-T_{d},\omega_C-T_{d_2} + \alpha }).
$$
Sincer $coker((\Phi_{\omega_C-T_{d_2},\omega_C-T_{d_2} + \alpha })=0$ by proposition \ref{surjimpds4}, we conclude that $\text{coker}(\Phi_{\omega_C-T_{d},\omega_C-T_{d_2} + \alpha })$ is zero. Now we use again proposition \ref{propositionfundamentalsurjection} with $L=\omega_C-T_{d} + \alpha$, $n=d_2-d$ and $M=\omega_C-T_{d}  $ (in particular $L-T_n=\omega_C-T_{d_2} + \alpha$). Notice that $h^1(L)=h^1(L-T_n)$ since $
h^0(T_d+\alpha)=h^0(T_{d_2}+\alpha)=0$, being $T_d$ and $T_{d_2}$ general points in $C$, and $g-1 \geq d_2 \geq d$. 
We then obtain a surjective map:
$$
 \operatorname{coker}(\Phi_{\omega_C-T_{d_2} + \alpha ,\omega_C-T_{d}  }) \rightarrow \operatorname{coker}(\Phi_{\omega_C-T_{d} + \alpha ,\omega_C-T_{d}  }), 
$$
and so we conclude that $\operatorname{coker}(\Phi_{\omega_C-T_{d} , \omega_C-T_{d} + \alpha} )=0$ for the general element. The proof for $$  \Phi_{C,\omega_C, \omega_C - T_d + \alpha}$$
is analogous. 
\end{proof}

\begin{example}
\label{exampledsds}
Observe that choosing $d_2=4$, $d_1=g_1+l+5$ with $g_1 \geq 3$ and  $0 \leq l+6 \leq 3g_1$, 
 all the conditions are satisfied and in this case $g=7g_1+3l+8$. Choosing $(g_1,l) \in \{(3+k,4),(4+k,2),(5+k,0),(3+k,5),(4+k,3),(5+k,1),(3+k,6), \ k \geq 0\}$,  we get all the genera greater or equal than $41$. Then, by theorem \ref{propsurjgeneral23}, for all $g \geq 41$ the Gaussian maps  with $2$,$3$ or $4$-marked points are surjective.
\end{example}
%
\begin{remark}
\label{remarksbounds}
    We expect our results regarding the surjectivity of \ref{prima} and   \ref{seconda} to be not sharp. In this remark, we compute the  expected  numerical range of degrees $d$ and genus $g$ such that one can aspect the surjectivity of the gaussian maps for the general element $(C,\alpha, T_d)$. 
    Denote by $\Phi^0$, $\Phi$ ($\Phi^{0'}$, $\Phi^{'}$) respectively $\Phi^0_{\omega_C,\omega_C -T_{d}+\alpha}$ and  $\Phi_{\omega_C,\omega_C -T_d+\alpha}$ ($\Phi^0_{\omega_C-T_d,\omega_C -T_{d}+\alpha}$ and  $\Phi_{\omega_C-T_d,\omega_C -T_d+\alpha}$),  and denote by  $R(g,d)$ ($R'(g,d)$) the kernel of $\Phi^0$ ($\Phi^{0'}$). We first observe that a necessary condition for surjectivity is $d\geq g-3$. Indeed, let $(C,\alpha,T_d)$ be a general element in $R_{g,d}$. Observe that $h^0(C,\omega_C+\alpha-T_d)=\max\{g-1-d,0\}$. Then, if $ d\geq g-1$, $R(g,d)=0$. If $d=g-2$, $h^0(C,\omega_C+\alpha-T_d)=1$ and $\Phi^0$ ($\Phi^{0'}$) is injective in both cases. Then suppose $d \leq g-3$. An easy calculation shows that in order to have the surjecitivity of $\ref{prima}$, we need $d \leq g-7-\frac{6}{g-2}+ \frac{\text{cork}(\Phi^0)}{g-2}$. In particular, one can aspect to have surjectivity of $\ref{prima}$ for every $g\geq 9$ and $d \leq g-8$. 

    Analogously, an easy calculation shows that in order to have the surjectivity of $\ref{seconda}$, we need $d \leq g-3 $, if $  g=4$ or $g=5$, and  $d \leq g-\frac{5}{2}-\sqrt{8g-7}+\text{cork}(\Phi^{0^{'}})$ if $g \geq 6$.
    
    We plan to return to the problem in the next future and improve the conditions on $g$ and $d$ to have the surjectivity of the Gaussian maps.

\end{remark}

	\end{document}